\documentclass[11pt]{amsart}
\usepackage[all]{xy}
\usepackage {fancybox}
\usepackage{ascmac}
\usepackage{amssymb} 
\usepackage{tikz-qtree}
\usepackage{forest}
\usepackage{mathrsfs}
\usepackage[T1]{fontenc}
\usepackage{amscd,verbatim}
\usepackage{floatflt}
\usepackage{xcolor}
\usepackage[hidelinks]{hyperref}
\usepackage{tikz-cd}
\newtheorem{lemma}{Lemma}[section]

\newtheorem{thm}[lemma]{Theorem}

\newtheorem{theorem}[lemma]{Theorem}

\newtheorem{prop}[lemma]{Proposition}
\newtheorem{proposition}[lemma]{Proposition}

\newtheorem{defn}[lemma]{Definition}

\newtheorem{definition}[lemma]{Definition}

\newtheorem{remark}[lemma]{Remark}

\numberwithin{equation}{section}

\renewcommand{\L}{\mathbb{L}}

\newcommand{\Q}{\mathbb{Q}}
\newcommand{\Z}{\mathbb{Z}}

\newcommand{\sC}{\mathcal{C}}
\newcommand{\sD}{\mathcal{D}}

\newcommand{\sO}{\mathcal{O}}

\newcommand{\sQ}{\mathcal{Q}}
\newcommand{\sW}{\mathcal{W}}

\newcommand{\sT}{\mathcal{T}}

\newcommand{\sM}{\mathcal{M}}

\newcommand{\ModFI}{\operatorname{ModFI}}
\newcommand{\BKMod}{\operatorname{Mod}_{/\mS}}
\newcommand{\BKModone}{\operatorname{Mod}_{/\mS_1}}
\newcommand{\BKModinf}{\operatorname{Mod}_{/\mS_{\infty}}}
\newcommand{\ev}{\operatorname{ev}}
\newcommand{\free}{\operatorname{free}}
\newcommand{\an}{\operatorname{an}}
\newcommand{\val}{\operatorname{val}}
\newcommand{\tf}{\operatorname{tf}}
\newcommand{\len}{\operatorname{length}} 
\newcommand{\tors}{\operatorname{tor}}

\newcommand{\sX}{\mathcal{X}}

\newcommand{\mf}{\mathfrak}

\newcommand{\fM}{\mf{M}}

\newcommand*{\xdashrightarrow}[2][]{%
  \mathrel{%
    \mathpalette{\da@xarrow{#1}{#2}{}\da@rightarrow{\,}{}}{}%
  }%
}

\newcommand{\Mod}{\operatorname{Mod}}

\newcommand{\fib}{\operatorname{fib}}
\newcommand{\cofib}{\operatorname{cofib}}

\newcommand{\Ext}{\operatorname{Ext}}

\newcommand{\dR}{\operatorname{{dR}}}
\newcommand{\Sing}{\operatorname{{Sing}}}
\newcommand{\qdR}{q\text{-}\operatorname{{dR}}}

\newcommand{\Tor}{\operatorname{Tor}}

\newcommand{\Ker}{\operatorname{Ker}}

\newcommand{\im}{\operatorname{Im}}
\renewcommand{\Im}{\operatorname{Im}}
\newcommand{\Coker}{\operatorname{Coker}}

\newcommand{\Spec}{\operatorname{Spec}}
\newcommand{\Spf}{\operatorname{Spf}}

\newcommand{\syn}{{\operatorname{syn}}}
\newcommand{\et}{{\operatorname{\acute{e}t}}}

\newcommand{\ch}{{\operatorname{ch}}}

\newcommand{\topp}{{\operatorname{top}}}
\newcommand{\gr}{{\operatorname{gr}}}
\newcommand{\fil}{{\operatorname{fil}}}
\newcommand{\BMS}{\mathrm{BMS}}

\newcommand{\can}{\operatorname{can}}
\newcommand{\bC}{\mathbb{C}}

\newcommand{\perf}{\operatorname{perf}}

\newcommand{\mS}{\mf{S}}

\newcommand{\Sp}{\operatorname{Sp}}

\newcommand{\cry}{\operatorname{cry}}
\numberwithin{equation}{section}

\renewcommand{\lim}{\operatornamewithlimits{\varprojlim}}
\newcommand{\colim}{\operatornamewithlimits{\varinjlim}}
\newcommand{\hocolim}{\operatorname{hocolim}}
\newcommand{\ol}{\overline}

\newcommand{\Ainf}{A_{\operatorname{inf}}}

\newcommand{\THH}{\operatorname{THH}}
\newcommand{\HH}{\operatorname{HH}}
\newcommand{\TP}{\operatorname{TP}}
\newcommand{\TC}{\operatorname{TC}}
\newcommand{\TCn}{\operatorname{TC}^{-}}
\newcommand{\HP}{\operatorname{HP}}
\newcommand{\ku}{\operatorname{ku}}
\newcommand{\Ktop}{K_{\operatorname{top}}}

\newcommand{\bS}{\mathbb{S}}
\newcommand{\LK}{{L_{K(1)}}}

\begin{document}

\title[Degeneration Theorems in Mixed Characteristic]{Degeneration Theorems of Connes and Feigin--Tsygan Type in Mixed Characteristic, with \texorpdfstring{$q$}{q}-Analogues}

\author{Keiho Matsumoto}
\email{lightkun0526@gmail.com}
\address{Graduate School of Science, Osaka University, Toyonaka, Osaka 560-0043, Japan}

\begin{abstract}
We prove mixed-characteristic analogues of the Connes and Feigin--Tsygan degeneration theorem. Let $W=W(k)$ be the Witt vectors of a perfect field of characteristic $p>0$. For a smooth proper variety $X$ over $W$, the de Rham-to-$\HP$ spectral sequence is split degenerate under the small-dimension hypothesis $\dim(X/W)<p-1$. More generally, if $X$ is smooth and proper over the ring of integers $\sO_K$ of a finite extension of $\mathrm{Frac}(W)$ with ramification index $e$, we prove the corresponding split degeneration under $2e\dim(X/\sO_K)<p-1$. Under the same ramification hypothesis, we also prove split degeneration of the $\Ainf$-to-$\TP$ spectral sequence. Finally, after inverting an explicit factorial, we obtain a topological $q$-de Rham analogue.
\end{abstract}

\maketitle

\tableofcontents

\section{Introduction}

For a smooth algebra $A$ over a commutative $\Q$-algebra $k$, the classical theorem of Connes, Feigin--Tsygan, and Goodwillie identifies periodic cyclic homology $\HP(A/k)$ with the $2$-periodization of the de Rham complex \cite{C83,FT83,G85}. More recently, Bhatt--Morrow--Scholze, in the $p$-complete quasisyntomic setting, and Antieau, for arbitrary commutative base rings, constructed an analogue of this picture in the form of a complete $\Z$-indexed multiplicative filtration on periodic cyclic homology \cite[Theorem~1.17]{BMS19}, \cite[Theorem~1.1]{A19}. For a smooth $k$-scheme $X$, the $n$-th graded piece of this filtration is given by
\[
R\Gamma(X,\Omega^\bullet_{X/k})[2n],
\]
or more precisely, by the Hodge-completed derived de Rham complex, which agrees with the usual de Rham complex in the smooth case. In general, however, the spectral sequence associated with this filtration need not be split degenerate.

In characteristic $p$, the situation improves under suitable dimension and liftability hypotheses. Let $k$ be a perfect field of characteristic $p>0$, and let $X$ be a smooth proper variety over $k$ admitting a lift to $W_2(k)$. If $\dim X<p$, then the Hochschild--Kostant--Rosenberg filtration on $\HH(X/k)$ degenerates. Indeed, this follows from the HKR theorem in characteristic $p$, due to Yekutieli and strengthened by Antieau--Vezzosi, together with the Deligne--Illusie theorem on the degeneration of the Hodge-to-de Rham spectral sequence \cite{Y02,AV20,DI87}. On the other hand, Mathew proved that if $T$ is a smooth proper dg category over $k$ admitting a lift to $W_2(k)$ and satisfying $\HH_i(T/k)=0$ for $i\notin[-p,p]$, then the Tate spectral sequence
\[
\HH_*(T/k)[u^{\pm1}] \Longrightarrow \HP_*(T/k)
\]
degenerates \cite[Theorem~1.2]{M20b}. Applying this to $T=\perf(X)$, the degeneration of the HKR filtration yields the required Hochschild amplitude bound, and a comparison of dimensions then shows that the de Rham-to-$\HP$ spectral sequence of Antieau also degenerates for $X$. The purpose of this paper is to establish mixed-characteristic analogues of this picture.

Let $W=W(k)$ be the ring of Witt vectors of a perfect field $k$ of characteristic $p>0$, let $K/\mathrm{Frac}(W)$ be a finite extension with ramification index $e$, and let $\sO_K$ be its ring of integers. 
\begin{theorem}
\begin{itemize}
  \item[(1)] Let $X$ be a smooth proper variety over $\sO_K$. Assume that $\dim X<p-1$ if $\sO_K=W$, and assume that $e(2\dim X-1)<p-1$ if $e\geq 2$. Then the crystalline-to-$\TP$ spectral sequence is split degenerate. In particular, there is an isomorphism of $W$-modules
\[
\pi_j\TP(X_k;\Z_p) \simeq \bigoplus_{i\in\Z} H_{\cry}^{-j+2i}(X_k/W).
\]
  \item[(2)] Let $X$ be a smooth proper variety over $\sO_K$. Assume $2e\dim(X/\sO_K)<p-1$. Then the de Rham-to-$\HP$ spectral sequence is split degenerate. In particular, there is an isomorphism of $\sO_K$-modules
  \begin{align*}
    \HP_j(X/\sO_K;\Z_p)&\simeq \bigoplus_{i\in \Z} H_{\dR}^{-j+2i}(X/\sO_K).
  \end{align*}
\end{itemize}
\end{theorem}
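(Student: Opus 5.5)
We would reduce both parts to an amplitude-bound degeneration statement of Mathew type, transported to mixed characteristic, and combine it with Hodge-theoretic input.

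\textbf{Part (2).} Work with the BMS/Antieau motivic filtration on $\HP(X/\sO_K;\Z_p)$. Its graded pieces are $R\Gamma_{\dR}(X/\sO_K)[2i]$, which is the Hodge-completed derived de Rham complex; this agrees with the de Rham complex since $X$ is smooth. Our plan is to split the filtration rather than merely show the spectral sequence degenerates.
\begin{itemize}
\item[(a)] First split the HKR filtration on $\HH(X/\sO_K;\Z_p)$ $S^1$-equivariantly, or at least enough to control the Tate construction. The analogue of the Antieau--Vezzosi HKR splitting holds when $\dim X<p$, so $\HH(X/\sO_K)\simeq\bigoplus_i R\Gamma(X,\Omega^i)[i]$ as objects. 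This uses that $\sO_K$-linear HH of a smooth scheme of dimension $d$ is formal after $p$-completion once $d<p$; in the ramified case this should follow from writing $\sO_K=W[z]/E(z)$ and working over $\mS=W[[z]]$.
\item[(b)] Next, the key input is that the circle action on $\HH$ of a smooth $\sO_K$-scheme induces the de Rham differential, and one needs a ``Deligne--Illusie-type'' splitting of the Tate filtration. Here we would use the Frobenius. Via $\TP$ and the Breuil--Kisin/prismatic comparison, $\pi_*\TP(X;\Z_p)$ is computed by the Breuil--Kisin cohomology $R\Gamma_{\mS}(X)$ with its Nygaard filtration. The crystalline/de Rham pieces then carry a Frobenius-semilinear structure.
\item[(c)] The hypothesis $2e\dim X<p-1$ is exactly what we expect makes the torsion Breuil--Kisin modules $H^i_{\mS}$ lie in Fontaine--Laffaille-like ranges, with Hodge weights in $[0,d]$ and $ed<p-1$. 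The factor $2$ arises because $\HP$ involves weight shifts by $2i$, so one needs $2d$ weights to fit. In this range the Frobenius eigen-decomposition, using weights of the $u$-action or the Adams-operation grading, separates the filtration pieces.
\end{itemize}
Concretely, we would construct operations $\psi^\ell$ on $\TP$ and $\HP$ acting by $\ell^{i}$ on $\gr^i$. Distinct weights $\ell^i$ for $i$ in a range of length less than $p-1$ are distinct mod $p$ when $\ell$ is a generator of $(\Z/p)^\times$. Idempotents built from these operations then split the filtration integrally, and the required range of length comes from the amplitude $2e\dim X$.

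\textbf{Part (1).} Apply the same splitting to $\TP(X_k;\Z_p)$, whose graded pieces are $R\Gamma_{\cry}(X_k/W)[2i]$. Using $\TP(X_k)\simeq\TP(X)\otimes$ (base change along $\mS\to W$), we obtain the sharper bound: when $e=1$ the weight range is $\dim X$. When $e\ge2$, the range $e(2\dim X-1)$ comes from the Frobenius-twisted $E(u)$-adic Nygaard filtration; the top degree $2\dim X$ contributes trivially.

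\textbf{Main obstacle.} The hardest step is constructing operations, whether Adams-type or from the $\mathbb{Z}_p^\times$-action on the Tate circle, that act by distinct scalars mod $p$ on the graded pieces of the motivic filtration. One also has to verify that the relevant $\Ext$ groups vanish, so that the filtration splits as $\sO_K$-modules and not merely degenerates. We would first try the Adams operations $\psi^\ell$ on $\TP$ (equivalently, on $\TC^-$ via the $S^1$-transfer) and bound the $\Ext$ obstructions using the amplitude hypothesis.
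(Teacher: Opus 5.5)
There is a genuine gap. The only mechanism in your proposal that could force degeneration and splitting is the weight argument with operations $\psi^\ell$, and you leave exactly that step open. To run it you would need to do three things:
\begin{itemize}
\item[(i)] Construct $\psi^\ell$ on $\HP(-/\sO_K;\Z_p)$ and $\TP(-;\Z_p)$ for $\ell$ prime to $p$. The degree-$\ell$ self-map of $S^1$ only gives a map equivariant along $[\ell]\colon S^1\to S^1$, so $p$-completeness is needed to identify the resulting Tate constructions.
\item[(ii)] Show that $\psi^\ell$ preserves the motivic filtration.
\item[(iii)] Above all, show that $\psi^\ell-\ell^n$ is (topologically) nilpotent on $\gr^n$, i.e.\ on $R\Gamma_{\dR}(X/\sO_K)[2n]$ and on $R\Gamma_{\cry}(X_k/W)[2n]$. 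The crystalline case requires controlling $\psi^\ell$ on $\pi_0\TP(S;\Z_p)$ for quasiregular semiperfectoid $S$ and on the periodicity class.
\end{itemize}
Only then does the standard argument kill $d_r$ for $(p-1)\nmid r$ and split $\pi_j$ into generalized eigenspaces.

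Your steps (a)--(c) supply none of this. A non-equivariant HKR splitting says nothing about the Tate construction. The appeals to a ``Frobenius eigen-decomposition'' and to ``Fontaine--Laffaille-like ranges'' are never turned into an argument. The base change from $\mS$ to $W$ that you invoke for (1) is available in the paper only after inverting $p$ (Theorem~\ref{ncbms}(3)), so by itself it carries no integral torsion information.

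Your account of the hypotheses also does not match the mechanism you propose. For fixed $j$, at most $d+1$ graded pieces contribute to $\pi_j$. An Adams-weight argument therefore only needs the $\ell^n$, for $n$ in a window of length $d+1$, to be pairwise incongruent mod $p$, i.e.\ $d<p-1$. The ramification index $e$ never enters, so neither $2ed<p-1$ nor $e(2d-1)<p-1$ can come out of it, and attributing the latter to the Nygaard filtration is unsupported.

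In the paper, $e$ enters (1) only through the integral crystalline--\'etale comparison $H^*_{\cry}(X_k/W)\simeq H^*_{\et}(X_\sC,\Z_p)\otimes_{\Z_p}W$. This is combined with the length inequality of Proposition~\ref{tplength}, Thomason's spectral sequence, the integral Chern character splitting (Proposition~\ref{kdeg}), and Li's torsion criterion (Proposition~\ref{sscrit}). In (2), $e$ enters through the structure theorem for extension-closed Breuil--Kisin modules of $E$-height in $[0,2d]$ (Proposition~\ref{modfieq}, which needs $2de<p-1$). That is combined with the $\Ext^1$-injectivity into $W(\sC^\flat)$ (Lemma~\ref{torssplit}) and base change along $\mS\to\sO_K$.

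If you could establish (i)--(iii), you would have a genuinely different route whose bound depends only on $d$. As written, the decisive step is missing.
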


\begin{remark}
The two smallness assumptions in the preceding theorem come from different
arguments.  The crystalline-to-$\TP$ statement is proved directly from the
crystalline comparison and the topological $K$-theoretic splitting criterion; it
does not pass through the Breuil--Kisin splitting argument used for the
de Rham-to-$\HP$ and $\Ainf$-to-$\TP$ statements.  This is why the first
statement admits the slightly sharper bound
$e(2\dim(X/\sO_K)-1)<p-1$ in the ramified case, whereas the Breuil--Kisin
arguments below use the $E$-height range $[0,2\dim(X/\sO_K)]$ and hence
require $2e\dim(X/\sO_K)<p-1$.
\end{remark}

The preceding theorem concerns the mixed-characteristic analogue of the de Rham-to-$\HP$ filtration. We establish similar split-degeneration results for further filtrations on topological additive invariants. In the $p$-adic setting, Bhatt--Morrow--Scholze constructed complete multiplicative filtrations on $\THH$, $\TC^-$, and $\TP$ for quasisyntomic rings, and showed that, for smooth $p$-adic formal schemes, the resulting graded pieces are governed by $\Ainf$-cohomology and its Breuil--Kisin twists \cite[Theorem 1.12]{BMS19}. Thus $\TP$ carries a mixed-characteristic filtration which may be viewed as a topological analogue of the motivic filtrations discussed above. Our next result shows that, under the same small ramification hypothesis, this filtration is again split degenerate. We also prove a $q$-de Rham analogue for the $S^1$-equivariant even filtration on $\TP(\mathcal X\otimes \ku/\ku)$, which is constructed by Wagner \cite{W25arxiv}.

\begin{theorem}
\begin{itemize}
  \item[(1)] Let $X$ be a smooth proper variety over $\sO_K$. Assume $2e\dim(X/\sO_K)<p-1$. Then the $\Ainf$-to-$\TP$ spectral sequence is split degenerate:
  \[
    \TP_j(X;\Z_p)\simeq \bigoplus_{i\in \Z} H_{\Ainf}^{-j+2i}(X).
  \]

  \item[(2)] Fix an even natural number $N$. Let $X$ be a smooth proper variety over $\Z[1/N]$, and write $d=\dim(X/\Z)$. Assume that there exists a derived scheme $\mathcal X$ over $\bS[1/N]$ together with an equivalence
  \[
    \mathcal X\otimes_{\bS}\Z \simeq X.
  \]
  Then, after inverting $(2d+1)!$, the $S^1$-equivariant even filtration on $\TP(\mathcal X\otimes \ku/\ku)$ is split degenerate:
  \[
    \pi_j\TP(\mathcal X\otimes \ku/\ku)\Bigl[\frac{1}{(2d+1)!}\Bigr]
    \simeq
    \bigoplus_{i\in \Z} H_{\qdR}^{-j+2i}(X/\Z)\Bigl[\frac{1}{(2d+1)!}\Bigr].
  \]
\end{itemize}
\end{theorem}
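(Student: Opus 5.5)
The strategy is the same for both parts. We use a splitting criterion: a complete exhaustive filtration on a periodic object splits if it is the shadow of a filtration on a $2$-periodic object carrying enough Adams operations or a weight decomposition. Concretely, we aim for an action of operations $\psi^\ell$ on $\TP$ (via its $K(1)$-local or topological $K$-theoretic incarnation) that acts on the $n$-th graded piece by multiplication by $\ell^{n}$. Once the graded pieces lie in a range of weights of length less than $p-1$, the idempotents $\prod_{m\neq n}(\psi^\ell-\ell^m)/(\ell^n-\ell^m)$ make sense integrally. Here $\ell$ is a topological generator of $\Z_p^\times$, so $\ell^n-\ell^m$ is a $p$-adic unit whenever $0<|n-m|<p-1$. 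These idempotents split the filtration and give the direct-sum formula.

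For (1), we work $p$-adically. By \cite[Theorem 1.12]{BMS19} the graded pieces of the BMS filtration on $\TP(X;\Z_p)$ are $R\Gamma_{\Ainf}(X)\{n\}[2n]$. For a smooth proper $X$ over $\sO_K$, the $\Ainf$-cohomology is the base change of Breuil--Kisin cohomology $R\Gamma_{\mS}(X)$ along $\mS\to\Ainf$. So it suffices to split the corresponding filtration on $\TP(X/\bS[z];\Z_p)$, whose graded pieces are Breuil--Kisin cohomology twisted by $\mS\{n\}$, and then base change.

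The key input is that the Frobenius on $H^i_{\mS}(X)$ has $E$-height in $[0,i]\subset[0,2d]$. Through the Breuil--Kisin classification, this places the whole $\TP$ in a category of Breuil--Kisin-type modules with $E$-height at most $2d$. We then follow the paper's Breuil--Kisin splitting argument. Weight $n$ can be detected by the eigenvalues of the cyclotomic Frobenius, twisted by $E$-powers. After reducing along $\mS\to W$, $u\mapsto 0$, the twist becomes $p^{en}$-type divisibility. The hypothesis $2e d<p-1$ ensures that the relevant differences of weights, of which there are at most $2ed$, are invertible. A Nakayama/completeness argument then lifts the splitting from the reduction to $\mS$, and base change gives the splitting for $\Ainf$. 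The main obstacle is exactly this step: producing a weight-separating operator on Breuil--Kisin modules of bounded $E$-height whose eigenvalue differences are units precisely when $2ed<p-1$. To attack it, I would first pass to the crystalline specialisation, where Fontaine--Laffaille theory is available: Fontaine--Laffaille modules with weights in an interval of length less than $p-1$ form an abelian category with good exactness properties. I would then use the torsion-free structure of $\mS$ to lift.

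For (2), we use Wagner's $S^1$-equivariant even filtration \cite{W25arxiv}. Its graded pieces are $q$-de Rham cohomology $R\Gamma_{\qdR}(X/\Z)[2n]$, suitably twisted, inside $\TP(\mathcal X\otimes\ku/\ku)$. The spectral lift $\mathcal X$ over $\bS[1/N]$ supplies genuine Adams operations $\psi^\ell$ on $\ku$-linear $\TP$, acting on the $n$-th graded piece by $\ell^n$ (up to the $q$-variable, with $q\mapsto q^\ell$). Since $X$ is smooth proper of dimension $d$, the nonzero graded contributions in a fixed degree span weights in an interval of length $2d$. After inverting $(2d+1)!$, the integers $\ell^n-\ell^m$ for suitable $\ell$, or more simply the Vandermonde-type products arising from $\psi^{\ell}$ for $\ell\le 2d+1$, become units. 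We therefore build the projectors as above and split the filtration. Completeness of the filtration together with finiteness in each degree then gives the stated isomorphism.
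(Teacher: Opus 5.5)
\textbf{Part (1).} There is a genuine gap here. Everything rests on a weight-separating operator, but you never construct one, and the candidate you suggest cannot do the job.

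The cyclotomic Frobenius is $\varphi$-semilinear. It records the weight $n$ only through divisibility by $E^n$, i.e.\ through the shift of $E$-height. Under $z\mapsto 0$ this becomes divisibility by $p^n$ up to a unit, since $E(0)$ is $p$ times a unit; the ramification index does not even appear there. Because $p^n-p^m$ is never a $p$-adic unit for $n,m\geq 1$, no integral interpolation idempotents can be built this way.

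The fallback route also fails:
\begin{itemize}
\item Fontaine--Laffaille theory is unavailable when $e\geq 2$.
\item A splitting obtained after $z\mapsto 0$ does not lift to $\mS$ by a Nakayama argument. For example, the extension of $\mS/p$ by $\mS/p$ with class $z\in\Ext^1_{\mS}(\mS/p,\mS/p)\simeq\mS/p$ splits modulo $z$ but not over $\mS$.
\item The homotopy of $\TCn(\sX/\bS[z];\Z_p)[1/u]$ need not lie in a category of $E$-height $\leq 2d$, because extensions raise the height.
\end{itemize}

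A weight splitting of the kind you want is available only on the \'etale side, $\LK K(X_{\sC})$; the paper obtains it from the integral Chern character, Theorem~\ref{ktopchdec}. Through Theorem~\ref{ncbms}(2) this gives split degeneration only after base change to $W(\sC^\flat)$ (Proposition~\ref{bkdegCflat}). The actual content of the paper's proof is the descent back to $\mS$, which uses two ingredients:
\begin{itemize}
\item The low-ramification structure theorem, extended to the extension closure $\overline{\Mod}^{2d,\varphi}_{/\mS_\infty}$ (Proposition~\ref{modfieq}, proved via $p^jM/p^{j+1}M$). It shows every $\pi_m\fil^n_{\BMS}$ is of the form $\mS^{a}\oplus\bigoplus\mS/p^{a_i}$. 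This is exactly where $2de<p-1$ enters.
\item Injectivity of $\Ext^1_{\mS}(\mS/p^a,\mS/p^b)\simeq\mS/p^{\min(a,b)}\to\Ext^1_{W(\sC^\flat)}$, which holds because $z$ becomes a unit in $W(\sC^\flat)$ (Lemmas~\ref{torssplit} and~\ref{glsplit}). This is the opposite of your reduction modulo $z$.
\end{itemize}
Degeneration over $\mS$ is detected the same way (Proposition~\ref{bkdeg}). Only after all this does one base change along $\mS\to\Ainf$.

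\textbf{Part (2).} The same problem recurs. The Adams operations on $\ku$ act on $\pi_0\TP(\ku/\ku)\simeq\Z[[q-1]]$ through $q\mapsto q^\ell$, as you note yourself. So $\psi^\ell$ is not $\Lambda$-linear and cannot act on a graded piece by the scalar $\ell^n$. Your expressions $\prod_{m\neq n}(\psi^\ell-\ell^m)/(\ell^n-\ell^m)$ are therefore not $\Lambda$-linear idempotents and cannot yield a splitting over $\Lambda$; the only elements of $\Lambda$ fixed by $q\mapsto q^\ell$ are constants.

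The paper uses no operations in this part. For each prime $\ell\nmid N$ with $\ell>2d+1$, it base changes along $\Lambda\to\Z_\ell[[q-1]]\to\Ainf^{(\ell)}$. It identifies the even filtration with $\fil^*_{\BMS}\TP(X_{\sO_{\mathbb C_\ell}};\Z_\ell)$ and $q$-de Rham cohomology with $\Ainf^{(\ell)}$-cohomology, then applies part (1) with $e=1$. Inverting $(2d+1)!$ serves only to guarantee $2d<\ell-1$ for every surviving prime.

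It then globalizes. Every maximal ideal of $\Lambda=R[[q-1]]$ has the form $(\ell,q-1)$, and $\Lambda_{\mf{m}_\ell}\to\Z_\ell[[q-1]]$ is faithfully flat. Hence the connecting maps vanish (Lemma~\ref{qdetect}) and the extension classes vanish (Lemma~\ref{qsplitlocal}).

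So (2) follows from (1) by localization, and your proposal does not supply (1).
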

The paper is organized as follows. In Section~2, we prove an integral form of the Chern character decomposition for reduced complex topological $K$-theory.  More precisely, for a finite CW complex of dimension $d$, we show that the Chern character becomes an isomorphism after inverting $\lfloor (d+1)/2\rfloor!$.  This result will be used later to control the integrality range in the topological $q$-de Rham analogue.

In Section~3, we study the crystalline-to-$\TP$ spectral sequence.  We first recall the pieces of noncommutative $p$-adic Hodge theory needed in the argument. After that, we prove split
degeneration of the crystalline-to-$\TP$ spectral sequence.

Section~4 is devoted to Breuil--Kisin modules.  We recall the relevant categories of Breuil--Kisin modules and the low-ramification structure theorem for torsion Breuil--Kisin modules.  We then prove a variant of this structure theorem which also allows finite free Breuil--Kisin modules.  This variant is the key algebraic input needed to control the filtrations appearing in the proof of the main theorems.

In Section~5, we prove the main results. We first prove split degeneration of the Breuil--Kisin-to-$\TCn$ spectral sequence.  By specializing this result along the maps from the Breuil--Kisin prism to $\sO_K$ and to $\Ainf$, we obtain the split degeneration of the de Rham-to-$\HP$ spectral sequence and of the $\Ainf$-to-$\TP$ spectral sequence.  Finally, we prove the topological $q$-de Rham analogue.

\bigskip
\noindent{\bfseries Acknowledgements.}

The author would like to express his sincere gratitude to Hiroyasu Miyazaki, whose question of whether commutative $p$-adic Hodge theory can be recovered from noncommutative $p$-adic Hodge theory provided the initial motivation for this work. The author is also grateful to Hui Gao for helpful conversations; in particular, the discussion of the split degeneration of the $\Ainf$-to-$\TP$ spectral sequence was partly inspired by Gao's perspective on integral filtered Sen theory. This project was initiated during the author's stay at the Tianyuan Mathematical Research Center. The author would like to thank the Center for its generous hospitality, excellent working conditions, and inspiring environment, all of which greatly contributed to the development of this work. This work was supported by Kakenhi Grants 22K13898 and 25KJ0210.

\section{\texorpdfstring{Chern character decomposition of $\Ktop$-theory over $\mathbf{Z}[1/M!]$}{Chern character decomposition of K-theory over Z[1/M!]}}

In this section we work \emph{entirely in reduced theories}. For a (finite) CW complex $Y$ we write
$\widetilde{K}_{\topp}^0(Y)$ and $\widetilde{K}_{\topp}^{-1}(Y)$ for reduced complex topological $K$-theory, and
$\widetilde H^*(Y;R)$ for reduced singular cohomology with coefficients in a ring $R$.
We also set
\[
\widetilde H^{\mathrm{even}}(Y;R):=\bigoplus_{n\ge 0}\widetilde H^{2n}(Y;R),
\qquad
\widetilde H^{\mathrm{odd}}(Y;R):=\bigoplus_{n\ge 0}\widetilde H^{2n+1}(Y;R).
\]
For $r\in\mathbf{Z}$, we also use the parity notation
\[
\widetilde H^{[r]}(Y;R):=
\begin{cases}
\widetilde H^{\mathrm{even}}(Y;R),& r\equiv 0\pmod 2,\\
\widetilde H^{\mathrm{odd}}(Y;R),& r\equiv 1\pmod 2.
\end{cases}
\]
Let $X$ be a finite CW complex of (cellular) dimension $d=\dim X$, and set
\[
M:=\Bigl\lfloor\frac{d+1}{2}\Bigr\rfloor,
\qquad
R:=\mathbf{Z}\!\left[\frac{1}{M!}\right].
\]

Over $\mathbf{Q}$, Atiyah showed that complex topological $K$-theory $\Ktop$ splits via the Chern character, i.e.\ it decomposes in terms of singular cohomology; see \cite{A62}.
In this paper we refine this by localizing at a finite set of primes: more precisely, we work over
\[
R=\mathbf{Z}\bigl[1/M!\bigr]
\]
and show that the same decomposition holds after tensoring $\widetilde{K}_{\topp}^m(X)$ with $R$.

\begin{remark}
The bound $M!$ is the elementary denominator bound obtained from the splitting
principle and the usual Chern character.  We do not claim that it is optimal:
sharper $p$-local decompositions can be obtained by more refined methods, for
instance using Adams operations.  The present form is chosen because it is
explicit and is sufficient for the topological $q$-de Rham application in
Section~5.
\end{remark}

\begin{theorem}\label{ktopchdec}
For every finite CW complex $X$ of dimension $d$ as above, the Chern character induces natural isomorphisms
\[
\begin{aligned}
\ch\colon \widetilde{K}_{\topp}^{0}(X)\otimes_{\mathbf{Z}} R &\xrightarrow{\ \sim\ } \widetilde H^{\mathrm{even}}(X;R),\\
\ch\colon \widetilde{K}_{\topp}^{-1}(X)\otimes_{\mathbf{Z}} R &\xrightarrow{\ \sim\ } \widetilde H^{\mathrm{odd}}(X;R).
\end{aligned}
\]
\end{theorem}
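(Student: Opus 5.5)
We will prove Theorem~\ref{ktopchdec} by first showing that the Chern character is integral over $R$ on a $d$-dimensional complex, and then comparing the Atiyah--Hirzebruch spectral sequence with cellular cohomology by induction on the cells.

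\textbf{Integrality.} Write $\ch=\sum_{n\ge 0}\ch_n$, where $\ch_n$ takes values in $\widetilde H^{2n}(X;\Q)$ and is given by $s_n/n!$, with $s_n$ the $n$-th Newton polynomial in the Chern classes. The polynomial $s_n$ has integer coefficients by the splitting principle. Since $\widetilde H^{2n}(X;\Q)=0$ for $2n>d$, only the components with $n\le\lfloor d/2\rfloor\le M$ survive. For these $n$ we have $n!\mid M!$, so $\ch$ lifts to a natural transformation
\[
\widetilde K^0_{\topp}(X)\otimes R\to\widetilde H^{\mathrm{even}}(X;R).
\]
More precisely, we define $\ch_n^R:=s_n/n!$ with values in $H^{2n}(-;R)$; this is an integral class once $n!$ is inverted, and it needs no torsion-free hypothesis. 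For the odd part, we use the suspension isomorphisms
\[
\widetilde K^{-1}(X)\cong\widetilde K^0(\Sigma X),\qquad \widetilde H^{\mathrm{odd}}(X)\cong\widetilde H^{\mathrm{even}}(\Sigma X).
\]
Since $\dim\Sigma X=d+1$, the even classes that appear lie in degrees at most $d+1$, so we need $n\le\lfloor (d+1)/2\rfloor=M$. This is exactly where the definition of $M$ comes from.

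\textbf{The odd case.} With these definitions, $\ch$ becomes a map of $\Z/2$-graded cohomology theories on finite CW complexes of dimension $\le d+1$, from $\widetilde K^*\otimes R$ to $\widetilde H^{[*]}(-;R)$. It commutes with suspension and connecting maps, and it is natural. We check it on spheres. For $S^{2n}$ with $n\le M$, a generator $\beta^n$ is sent to a generator of $H^{2n}(S^{2n};\Z)$ up to sign: $c_n$ of the Bott class is $\pm(n-1)!$ times a generator, and $s_n=\pm n\,c_n$ there. Hence $\ch$ is an isomorphism over $R$ on $S^{2n}$. The odd-dimensional spheres follow by suspension.

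\textbf{Induction.} We induct on the number of cells of $X$, using the cofiber sequence $X'\to X\to S^k$ with $k\le d$. Tensoring with $R$ is exact, and both theories give long exact (six-term periodic) sequences. The five lemma then propagates the isomorphism, provided the maps commute with the connecting homomorphisms; this follows from naturality together with compatibility with suspension. Some care is needed because the range of validity depends on dimension. We therefore formulate the claim for all finite complexes of dimension $\le d+1$, with the odd case reduced to $\Sigma X$. The five lemma applied to the cofibration involves $\Sigma X'$ and $S^{k+1}$, whose dimensions are $\le d+1$, so everything stays within range.

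\textbf{Main obstacle.} The delicate step is making $\ch$ honestly defined over $R$ rather than $\Q$, when the cohomology may have torsion. The map $H(X;R)\to H(X;\Q)$ is not injective, so we cannot just check that rational values are integral. We plan to handle this by defining $\ch^R_n$ via the universal integral classes $s_n\in H^{2n}(BU;\Z)$. On the relevant skeleta of $BU\times\Z$, the class $s_n$ is divisible by $n!$. The reason is that $H^*(BU;\Z)$ is torsion-free and the divisibility holds rationally on a torsion-free group; from this we get a well-defined class $s_n/n!\in H^{2n}(BU;R)$. Pulling back along classifying maps then gives a natural transformation of theories. Additivity holds because $s_n$ is primitive. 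Compatibility with suspension, needed for the five lemma, requires an additional check on $S^1\wedge-$.
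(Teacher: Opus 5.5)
Your argument is essentially the paper's: integrality of $\ch_n=s_n/n!$ over $R$, with the odd part handled through $\Sigma X$ (which is exactly where $M=\lfloor (d+1)/2\rfloor$ comes from), the generator-to-generator check on spheres, and the five lemma along a cell filtration; the only difference is that you attach one cell at a time, while the paper passes from the $(d-1)$-skeleton to $X$ with cofiber a wedge of $d$-spheres. One correction to your last paragraph: $s_n$ is \emph{not} divisible by $n!$ on the relevant skeleta of $BU\times\Z$ (already $s_2=c_1^2-2c_2$ is not divisible by $2$ in $H^4$), but no such divisibility is needed, since $s_n/n!\in H^{2n}(BU;R)$ is defined simply because $n!$ is a unit in $R$, as you correctly said in your integrality paragraph.
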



\begin{lemma}\label{chint}
Let $X$ be a finite CW complex of dimension $d$ and set $M=\lfloor(d+1)/2\rfloor$, $R=\mathbf{Z}[1/M!]$.
Then the usual rational Chern character on reduced $K$-theory factors through $R$:
\[
\begin{aligned}
\ch:\ \widetilde{K}_{\topp}^{0}(X)&\longrightarrow \widetilde H^{\mathrm{even}}(X;\mathbf{Q}) \\
&\text{factors as} \\
\ch:\ \widetilde{K}_{\topp}^{0}(X)&\longrightarrow \widetilde H^{\mathrm{even}}(X;R).
\end{aligned}
\]
and similarly
\[
\ch:\ \widetilde{K}_{\topp}^{-1}(X)\longrightarrow \widetilde H^{\mathrm{odd}}(X;\mathbf{Q})
\quad\text{factors as}\quad
\ch:\ \widetilde{K}_{\topp}^{-1}(X)\longrightarrow \widetilde H^{\mathrm{odd}}(X;R).
\]
The same statement holds for any finite CW complex $Y$ of dimension $\le d$ (in particular for any cofiber $X/A$
of a CW pair $(X,A)$ with $\dim X\le d$).
\end{lemma}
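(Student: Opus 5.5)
The plan is to construct the $R$-valued Chern character directly from integral characteristic classes, rather than lifting rational classes after the fact. Since $H^*(X;R)\to H^*(X;\Q)$ need not be injective, lifting afterwards would create a well-definedness problem. For a complex vector bundle $E$ over a finite CW complex $Y$, let $s_k(E)\in H^{2k}(Y;\Z)$ be the $k$-th Newton (power-sum) polynomial in the Chern classes $c_1(E),\dots,c_k(E)$. By the splitting principle, $s_k(E)$ is the integral class that formally equals $\sum_i x_i^k$ in the Chern roots. The rational Chern character is then
\[
\ch(E)=\operatorname{rk}(E)+\sum_{k\ge 1}\frac{s_k(E)}{k!}.
\]
The only denominators are the factorials $k!$. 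The key observation is that $H^{2k}(Y;\Z)=0$ for $2k>\dim Y$, because $Y$ has no cells above dimension $\dim Y$. So only $k\le\lfloor \dim Y/2\rfloor$ contributes.

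For $\widetilde K^0_{\topp}$, take $\dim Y\le d$, so only $k\le \lfloor d/2\rfloor\le M$ occurs and each $k!$ divides $M!$, hence is a unit in $R$. I will define
\[
\ch_R(E):=\sum_{1\le k\le \lfloor d/2\rfloor} \frac{1}{k!}\,\bigl(s_k(E)\otimes 1\bigr)\in \widetilde H^{\mathrm{even}}(Y;R),
\]
using the coefficient map $H^*(Y;\Z)\to H^*(Y;R)$ and division by the unit $k!$; for $k\ge1$ the classes $s_k$ restrict to zero on the basepoint, so they are reduced. Power sums are additive, $s_k(E\oplus F)=s_k(E)+s_k(F)$ integrally, by the splitting principle. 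Hence $\ch_R$ is additive on isomorphism classes of bundles. Since $Y$ is compact, every class of $\widetilde K^0_{\topp}(Y)$ is a difference $[E]-[F]$ of bundles of equal rank, so $\ch_R$ descends to a homomorphism on $\widetilde K^0_{\topp}(Y)$. By construction, composing with $H^*(Y;R)\to H^*(Y;\Q)$ recovers the usual $\ch$, which gives the desired factorization. Naturality in $Y$ follows from naturality of Chern classes and of the coefficient map.

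For the odd part I will use the standard definition $\widetilde K^{-1}_{\topp}(Y)=\widetilde K^0_{\topp}(\Sigma Y)$, with the odd Chern character obtained by composing with the suspension isomorphism $\widetilde H^{2k}(\Sigma Y;\cdot)\cong \widetilde H^{2k-1}(Y;\cdot)$. Here $\Sigma Y$ is a finite CW complex of dimension $\le d+1$, so only $k\le\lfloor (d+1)/2\rfloor=M$ contributes. This is exactly where the constant $M$, rather than $\lfloor d/2\rfloor$, comes from. Applying the even construction to $\Sigma Y$ and transporting along the suspension isomorphism over $R$, which is compatible with the one over $\Q$, gives the factorization through $\widetilde H^{\mathrm{odd}}(Y;R)$. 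The final clause about $Y$ of dimension $\le d$, including cofibers $X/A$, is automatic, because the argument only uses $\dim Y\le d$.

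The main point needing care is not the denominator count but well-definedness: the integral lift must be made canonically via the classes $s_k$ and checked to be additive integrally, not merely rationally. I expect this step to be routine via the splitting principle. The only subtlety to watch is the bookkeeping of the suspension degree shift, which forces the bound $M=\lfloor (d+1)/2\rfloor$.
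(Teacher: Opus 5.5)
Your proposal is correct and is essentially the paper's argument. The paper also writes $\ch_n$ as $\frac{1}{n!}\sum_i x_i^n$ in Chern roots via the splitting principle, uses $H^{2n}(Y;\Z)=0$ for $2n>\dim Y$ to bound the denominators by $\lfloor d/2\rfloor!$, and passes to $\Sigma X$ of dimension $d+1$ to get $M!$ in the odd case. Your explicit construction of the lift from the integral Newton classes $s_k(E)$ simply makes precise the well-definedness that the paper handles implicitly through the injectivity of $p^*$.

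One small repair: keep the degree-zero term $\operatorname{rk}(E)\in H^0(Y;\Z)$ in your definition of $\ch_R$. For disconnected $Y$ (e.g.\ $d=0$, or a disconnected skeleton in the later induction), a class in $\widetilde K^0_{\topp}(Y)$ need not be a difference of bundles with equal rank functions. Its $\widetilde H^0$-component is then nonzero, so dropping that term breaks the claimed compatibility with the rational $\ch$.
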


\begin{proof}
We first treat $\widetilde{K}_{\topp}^0$.
Recall that for a complex vector bundle $E$ on $X$ the Chern character is defined by
\[
\ch(E)=\sum_{n\ge 0}\ch_n(E),\qquad \ch_n(E)\in H^{2n}(X;\mathbf{Q}),
\]
and $\ch_n(E)$ is a universal polynomial in $c_1(E),\dots,c_n(E)$ with coefficients in $\mathbf{Z}[1/n!]$
(equivalently, the denominators of $\ch_n$ divide $n!$).

To see this in a way suited for denominator control, use the splitting principle as follows.
There exists a map $p\colon X'\to X$ such that:
(i) $p^*\colon H^*(X;\mathbf{Z})\to H^*(X';\mathbf{Z})$ is injective, and
(ii) $p^*E$ splits as a direct sum of line bundles $L_1\oplus\cdots\oplus L_r$.
Write $x_i:=c_1(L_i)\in H^2(X';\mathbf{Z})$ for the Chern roots. Then by definition of $\ch$ for line bundles,
\[
\ch(L_i)=e^{x_i}=\sum_{n\ge 0}\frac{x_i^n}{n!},
\]
and hence
\[
\ch(p^*E)=\sum_{i=1}^r e^{x_i}
=\sum_{n\ge 0}\frac{1}{n!}\Bigl(\sum_{i=1}^r x_i^n\Bigr).
\]
Therefore for each $n\ge 0$ we have
\[
\ch_n(p^*E)\in H^{2n}\!\left(X';\mathbf{Z}\!\left[\frac{1}{n!}\right]\right).
\]
Since $p^*$ is injective on cohomology, it follows that already on $X$,
\[
\ch_n(E)\in H^{2n}\!\left(X;\mathbf{Z}\!\left[\frac{1}{n!}\right]\right).
\]

Now $\dim X=d$ implies $H^{k}(X;\mathbf{Z})=0$ for $k>d$, hence $\ch_n(E)=0$ for $2n>d$.
Thus only the components with $n\le \lfloor d/2\rfloor$ can occur, and all denominators are killed by $\bigl\lfloor d/2\bigr\rfloor!\ \mid\ M!$. Consequently $\ch(E)$ actually lands in $H^{\mathrm{even}}(X;R)$, and passing to reduced groups
(i.e.\ taking kernels of the maps to the point) gives
\[
\ch:\ \widetilde{K}_{\topp}^0(X)\longrightarrow \widetilde H^{\mathrm{even}}(X;R).
\]

For $\widetilde{K}_{\topp}^{-1}$ we use the standard identification $\widetilde{K}_{\topp}^{-1}(X)\cong \widetilde{K}_{\topp}^0(\Sigma X)$.
Since $\Sigma X$ is a finite CW complex of dimension $d+1$, the above argument applied to $\Sigma X$ shows
that the denominators needed for $\ch$ on $\widetilde{K}_{\topp}^0(\Sigma X)$ divide $
\Bigl\lfloor\frac{d+1}{2}\Bigr\rfloor! \ =\ M!$, hence $\ch$ lands in $\widetilde H^{\mathrm{even}}(\Sigma X;R)\cong \widetilde H^{\mathrm{odd}}(X;R)$.
This yields the desired factorization for $\widetilde{K}_{\topp}^{-1}(X)$.

Finally, if $Y$ is any finite CW complex with $\dim Y\le d$, the same argument applies verbatim (with $d$ replaced by
$\dim Y$). In particular, for a CW pair $(X,A)$ with $\dim X\le d$, its cofiber $X/A$ has $\dim(X/A)\le d$,
so the same denominator control applies to $Y=X/A$.
\end{proof}


Let $(X,A)$ be a CW pair with inclusion $i\colon A\hookrightarrow X$. The quotient $X/A$ is the cofiber of $i$,
and we have the canonical cofiber (Puppe) sequence
\[
A \xrightarrow{\,i\,} X \longrightarrow X/A \xrightarrow{\,q\,} \Sigma A.
\]
For any reduced cohomology theory $\widetilde E^*$, the long exact sequence associated to this cofibration
has connecting morphisms
\[
\delta_E:\ \widetilde E^{m}(A)\longrightarrow \widetilde E^{m+1}(X/A),
\]
and the connecting morphism admits the standard geometric description
\[
\delta_E \ =\ q^{*}\circ \sigma_E,
\qquad
\sigma_E:\ \widetilde E^{m}(A)\xrightarrow{\ \cong\ }\widetilde E^{m+1}(\Sigma A)
\]
where $\sigma_E$ is the suspension isomorphism. This description is precisely the one used in \cite{A62}.

\begin{theorem}\label{chbdy}
Let $(X,A)$ be a CW pair with $\dim X\le d$, and set
\[
M=\Bigl\lfloor\frac{d+1}{2}\Bigr\rfloor,\qquad R=\mathbf{Z}\!\left[\frac{1}{M!}\right].
\]
Then the Chern character
\[
\ch:\ \widetilde{K}_{\topp}^{0}(-)\otimes_{\mathbf{Z}} R \to \widetilde H^{\mathrm{even}}(-;R),
\qquad
\ch:\ \widetilde{K}_{\topp}^{-1}(-)\otimes_{\mathbf{Z}} R \to \widetilde H^{\mathrm{odd}}(-;R)
\]
is compatible with the connecting morphisms in the long exact sequences of the cofibration
$A\to X\to X/A$.
Equivalently, for $m=0,-1$ the square
\[
\begin{tikzcd}[column sep=large]
\widetilde{K}_{\topp}^{m}(A)\otimes R \ar[r,"\delta_K"] \ar[d,"\ch"'] &
\widetilde{K}_{\topp}^{m+1}(X/A)\otimes R \ar[d,"\ch"] \\
\widetilde H^{[m]}(A;R) \ar[r,"\delta_H"] &
\widetilde H^{[m+1]}(X/A;R)
\end{tikzcd}
\]
commutes.
\end{theorem}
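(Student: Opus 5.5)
The plan is to use the factorization $\delta_E=q^*\circ\sigma_E$ recalled just before the statement. This reduces commutativity of the square to two separate facts. First, $\ch$ is natural with respect to the map $q\colon X/A\to\Sigma A$. Second, $\ch$ commutes with the suspension isomorphisms $\sigma_K$ and $\sigma_H$. Before either step, we must check that every group in sight receives an $R$-valued Chern character via Lemma~\ref{chint}. The spaces $A$ and $X/A$ have dimension $\le d$, so this is immediate for them. The space $\Sigma A$ has dimension $\le d+1$, and here we must be careful. For $m=-1$ the relevant group is $\widetilde K^{0}_{\topp}(\Sigma A)$, and by the proof of Lemma~\ref{chint} its $\ch_n$ vanish for $2n>d+1$. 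The denominators therefore divide $\lfloor (d+1)/2\rfloor!=M!$. For $m=0$ the relevant group is $\widetilde K^{1}_{\topp}(\Sigma A)\cong \widetilde K^0_{\topp}(A)$ by Bott periodicity, which is again controlled by $M!$. So all vertical maps in the extended diagram
\[
\widetilde K^{m}_{\topp}(A)\otimes R\xrightarrow{\sigma_K}\widetilde K^{m+1}_{\topp}(\Sigma A)\otimes R\xrightarrow{q^*}\widetilde K^{m+1}_{\topp}(X/A)\otimes R
\]
are defined over $R$.

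For the right-hand square, I will use that $\ch$ is a natural transformation over $\mathbf{Q}$, defined via characteristic classes of pulled-back bundles. The $R$-valued maps are restrictions of the rational ones. The key point is that for a finite CW complex $Y$, $\widetilde H^*(Y;R)\to\widetilde H^*(Y;\mathbf{Q})$ is not injective in general, because torsion prime to $M!$ survives. So I will not argue by injectivity. Instead, I will read Lemma~\ref{chint} as a genuine construction: $\ch_n$ is the universal polynomial with $\mathbf{Z}[1/n!]$-coefficients in the integral Chern classes, evaluated in $H^{2n}(Y;R)$. Naturality of integral Chern classes under pullback then gives naturality of this $R$-valued $\ch$ for any cellular map. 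In particular it holds for $q^*$.

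For the left-hand square, I will argue as follows. For $m=-1$, $\widetilde K^{-1}_{\topp}(A)$ is defined as $\widetilde K^0_{\topp}(\Sigma A)$ and $\ch$ on it was defined through $\Sigma A$. Under this definition $\sigma_K$ is the identity and the odd-degree $\ch$ is transported along $\sigma_H$, so the square commutes tautologically. For $m=0$, $\sigma_K$ is given by the Bott class: $\widetilde K^0_{\topp}(A)\to\widetilde K^{0}_{\topp}(\Sigma^2A)$, $x\mapsto \beta\wedge x$. Multiplicativity of $\ch$ for the external product $S^2\wedge A$ gives $\ch(\beta\wedge x)=\ch(\beta)\wedge\ch(x)$. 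Since $\ch(\beta)$ is the integral generator of $\widetilde H^2(S^2;\mathbf{Z})$, this is exactly $\sigma_H^2(\ch x)$. Multiplicativity of the $R$-valued $\ch$ would itself be justified on split bundles via the splitting principle, using $e^{x+y}=e^xe^y$.

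The main obstacle is the bookkeeping in the $m=0$ case. There the target $\widetilde K^1_{\topp}(X/A)$ involves $\Sigma(X/A)$, of dimension $d+1$, and the Bott-periodicity identification must be compatible with the chosen normalization of the odd Chern character. I would first fix conventions so that odd $\ch$ is always defined via a single suspension. I would then verify that the double-suspension Bott description agrees with this convention. That agreement reduces to the identity $\ch(\beta)=$ generator, which is classical.
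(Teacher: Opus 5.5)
Your route is the same as the paper's: factor $\delta=q^*\circ\sigma$, then use naturality of $\ch$ for $q^*$ and compatibility of $\ch$ with suspension. The paper runs this argument over $\mathbf{Q}$, citing that $\ch$ is a natural transformation of cohomology theories, and then appeals to Lemma~\ref{chint} for $R$-valuedness. Your remark that $\widetilde H^*(Y;R)\to\widetilde H^*(Y;\mathbf{Q})$ need not be injective is a fair criticism of that last step in the stated generality. It is harmless in Lemma~\ref{fivech}, where $X/A$ is a wedge of spheres. Defining $\ch_n=s_n(c)/n!$ integrally, with $s_n$ the Newton polynomial, is the right way to make the argument honest.

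There is one concrete hole, in the $m=0$ case you flagged.
\begin{itemize}
\item With odd $\ch$ defined through a single suspension, the middle vertical arrow is $\ch$ on $\widetilde K^{1}_{\topp}(\Sigma A)=\widetilde K^{0}_{\topp}(\Sigma^2A)$, and $\dim\Sigma^2A$ can be $d+2$.
\item If $d=2k$ is even, then $M=k$, but $\Sigma^2A$ carries a component $\ch_{k+1}$. The formula $s_{k+1}(c)/(k+1)!$ from Lemma~\ref{chint} is not defined over $R=\mathbf{Z}[1/k!]$.
\item Saying this group is ``controlled by $M!$'' because it is isomorphic to $\widetilde K^0_{\topp}(A)$ presupposes the Bott compatibility you are trying to prove.
\item Your splitting-principle proof of $\ch(\beta\wedge x)=\sigma_H^2\ch(x)$ with $R$-coefficients breaks in exactly this degree. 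On the flag bundle you would need $(y+x_i)^{k+1}/(k+1)!$, and by your own remark you cannot detour through $\mathbf{Q}$.
\end{itemize}

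The repair has three parts.
\begin{itemize}
\item On any suspension, all cup products of positive-degree classes vanish, so Newton's identity gives $s_n(c)=(-1)^{n-1}n\,c_n$. You may therefore define $\ch_n=(-1)^{n-1}c_n/(n-1)!$. This needs only $(n-1)!\mid M!$ on both $\Sigma^2A$ and $\Sigma(X/A)$, and it agrees with $s_n/n!$ whenever the latter is defined.
\item The needed identity becomes integral: $(-1)^{n-1}c_n(\beta\wedge x)=\sigma_H^2\,s_{n-1}(x)$, where $s_{n-1}(x)=(n-1)!\,\ch_{n-1}(x)$ is the integral Newton class.
\item Prove this identity on the universal space $\mathbb{CP}^1\times BU(r)$. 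There it follows from rational multiplicativity of $\ch$, because the cohomology is torsion-free and the Chern classes of $\beta\boxtimes x$ lie in $g\cdot\widetilde H^*(BU(r);\mathbf{Z})$ with $g^2=0$. Then pull back to $S^2\times A$, and use that $\widetilde H^*(S^2\wedge A)\to\widetilde H^*(S^2\times A)$ is injective.
\end{itemize}
Alternatively, note that Lemma~\ref{fivech} only uses $\dim A\le d-1$. In that case $\dim\Sigma^2A\le d+1$ and the issue does not arise.
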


\begin{proof}
Let $q\colon X/A\to \Sigma A$ be the Puppe map. By the geometric description of the connecting morphism recalled above,
we have
\[
\delta_K=q^*\circ \sigma_K,\qquad \delta_H=q^*\circ \sigma_H,
\]
where $\sigma_K,\sigma_H$ are the suspension isomorphisms in reduced $K$-theory and reduced cohomology.

The Chern character is a natural transformation of reduced cohomology theories (see \cite[\S2--\S3]{AH61}),
hence it commutes with pullback along $q$ and also with suspension. Therefore the diagrams
\[
\begin{tikzcd}[column sep=large]
\widetilde{K}_{\topp}^{m+1}(\Sigma A)\otimes R \ar[r,"q^*"] \ar[d,"\ch"'] &
\widetilde{K}_{\topp}^{m+1}(X/A)\otimes R \ar[d,"\ch"] \\
\widetilde H^{[m+1]}(\Sigma A;R) \ar[r,"q^*"] &
\widetilde H^{[m+1]}(X/A;R)
\end{tikzcd}
\]
\[
\begin{tikzcd}[column sep=large]
\widetilde{K}_{\topp}^{m}(A)\otimes R \ar[r,"\sigma_K"] \ar[d,"\ch"'] &
\widetilde{K}_{\topp}^{m+1}(\Sigma A)\otimes R \ar[d,"\ch"] \\
\widetilde H^{[m]}(A;R) \ar[r,"\sigma_H"] &
\widetilde H^{[m+1]}(\Sigma A;R)
\end{tikzcd}
\]
commute. Composing and using $\delta_K=q^*\circ\sigma_K$, $\delta_H=q^*\circ\sigma_H$ yields the claim.

Finally, Lemma~\ref{chint} ensures that when $\dim X\le d$, the Chern character takes values
in $R$ on $A$ and on $X/A$ (since $\dim(X/A)\le d$), so the above argument indeed applies over $R=\mathbf{Z}[1/M!]$.
\end{proof}


Let $X^{(p)}$ denote the $p$-skeleton of $X$, and set $A:=X^{(d-1)}$.
Since $X$ is obtained from $A$ by attaching $d$-cells, collapsing $A$ to a point identifies the cofiber $X/A$
(up to pointed homotopy) with a wedge of $d$-spheres:
\[
X/A\ \simeq\ \bigvee_{\alpha\in I_d} S^d,
\]
with one summand for each $d$-cell of $X$. We will use this description to compute reduced $K$-theory and reduced
cohomology of $X/A$.

\begin{lemma}\label{cofibch}
Let $X$ be a finite CW complex of dimension $d$, set $A:=X^{(d-1)}$, and let
$M=\lfloor(d+1)/2\rfloor$, $R=\mathbf{Z}[1/M!]$.
Then the Chern character induces isomorphisms
\[
\begin{aligned}
\ch\colon \widetilde{K}_{\topp}^{0}(X/A)\otimes R &\xrightarrow{\ \sim\ } \widetilde H^{\mathrm{even}}(X/A;R),\\
\ch\colon \widetilde{K}_{\topp}^{-1}(X/A)\otimes R &\xrightarrow{\ \sim\ } \widetilde H^{\mathrm{odd}}(X/A;R).
\end{aligned}
\]
\end{lemma}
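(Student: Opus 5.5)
We use the identification $X/A\simeq\bigvee_{\alpha\in I_d}S^d$ recalled above. Since reduced $K$-theory, reduced cohomology and the Chern character all turn finite wedges into direct sums, the claim reduces to the single sphere $S^d$. For the wedge axiom we use that $\ch$ is a natural transformation of reduced cohomology theories, as in the proof of Theorem~\ref{chbdy}. Lemma~\ref{chint} (applied with $Y=X/A$, $\dim Y\le d$) guarantees that $\ch$ takes values in $R$-coefficients, so all maps are defined over $R$.

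For $S^d$ we split by parity. Say $d=2n$. Bott periodicity gives $\widetilde{K}_{\topp}^{0}(S^{2n})\cong\mathbf{Z}$ and $\widetilde{K}_{\topp}^{-1}(S^{2n})=0$, while $\widetilde H^{\mathrm{even}}(S^{2n};R)=R$ and $\widetilde H^{\mathrm{odd}}=0$. So the odd statement is trivial. In the even case we must compute $\ch$ on a generator $\beta_n$, namely the $n$-fold external product of $H-1$ on $S^2$ (the Bott class). The Chern character is multiplicative for external products, and $\ch(H-1)=e^{x}-1=x$ on $S^2$, where $x$ generates $H^2(S^2;\mathbf{Z})$ and $x^2=0$. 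Hence $\ch(\beta_n)$ is the generator $x_1\cdots x_n$ of $H^{2n}(S^{2n};\mathbf{Z})$. So $\ch$ is already an integral isomorphism $\mathbf{Z}\to\mathbf{Z}$, and in particular an isomorphism after $\otimes R$.

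Now say $d=2n+1$ is odd. Then $\widetilde{K}_{\topp}^{0}(S^{2n+1})=0=\widetilde H^{\mathrm{even}}(S^{2n+1};R)$. For $\widetilde{K}_{\topp}^{-1}(S^{2n+1})\cong\widetilde{K}_{\topp}^0(S^{2n+2})\cong\mathbf{Z}$, we use that $\ch$ commutes with the suspension isomorphisms $\sigma_K,\sigma_H$, which was used in Theorem~\ref{chbdy}. This reduces the odd case to the even case on $S^{2n+2}$, handled above. So $\ch$ again sends a generator to a generator.

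The only real obstacle is the normalization in the generator computation: one has to make sure that $\ch(\beta_n)$ is exactly a generator and not $n!$ times one. This is where the multiplicativity of $\ch$ and the vanishing of $x^2$ on $S^2$ are essential. Denominators like $1/n!$ only appear for non-split classes on higher skeleta, not on spheres. Hence the inversion of $M!$ is not needed for this lemma itself; it is only needed so that $\ch$ is defined over $R$ compatibly with Lemma~\ref{chint}. That compatibility is what the later five-lemma induction over skeleta, using Theorem~\ref{chbdy}, will require.
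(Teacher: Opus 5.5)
Your proof is correct and follows the paper's route: you reduce to a single sphere via $X/A\simeq\bigvee S^d$ and the wedge axiom, then apply Bott periodicity and the normalization of $\ch$ on the Bott class. The one difference is that you actually verify that normalization, using $\ch(H-1)=x$ on $S^2$, multiplicativity for external products, and suspension-compatibility for odd $d$, where the paper simply cites it as standard; this also shows that the isomorphism on $X/A$ already holds integrally, before tensoring with $R$.
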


\begin{proof}
Using the wedge decomposition $X/A\simeq \bigvee_{\alpha} S^d$ and the wedge axiom for reduced theories, it suffices
to treat $S^d$.

If $d$ is even, Bott periodicity gives $\widetilde{K}_{\topp}^{0}(S^d)\cong \mathbf{Z}$ and $\widetilde{K}_{\topp}^{-1}(S^d)=0$,
and $\widetilde H^{\mathrm{even}}(S^d;R)\cong \widetilde H^{d}(S^d;R)\cong R$ while
$\widetilde H^{\mathrm{odd}}(S^d;R)=0$.
By the standard normalization of the Chern character (sending the Bott generator to the fundamental cohomology
class, up to sign), $\ch$ induces an isomorphism after tensoring with $R$.

If $d$ is odd, then
\[
\widetilde{K}_{\topp}^{-1}(S^d)\cong \mathbf{Z},
\qquad
\widetilde{K}_{\topp}^{0}(S^d)=0,
\]
and
\[
\widetilde H^{\mathrm{odd}}(S^d;R)\cong \widetilde H^{d}(S^d;R)\cong R,
\qquad
\widetilde H^{\mathrm{even}}(S^d;R)=0.
\]
Again, by normalization and Lemma~\ref{chint} (applied to $S^d$), the map becomes an
isomorphism after tensoring with $R$.

Therefore $\ch$ is an isomorphism on each sphere summand and hence on the wedge $X/A$.
\end{proof}


\begin{lemma}\label{fivech}
Let $X$ be a finite CW complex of dimension $d$, set $A:=X^{(d-1)}$, and let $R=\mathbf{Z}[1/M!]$ with
$M=\lfloor(d+1)/2\rfloor$.
Assume that the Chern character is an isomorphism on $A$:
\[
\ch:\ \widetilde{K}_{\topp}^{0}(A)\otimes R \xrightarrow{\ \sim\ } \widetilde H^{\mathrm{even}}(A;R),
\qquad
\ch:\ \widetilde{K}_{\topp}^{-1}(A)\otimes R \xrightarrow{\ \sim\ } \widetilde H^{\mathrm{odd}}(A;R).
\]
Then the same holds for $X$:
\[
\ch:\ \widetilde{K}_{\topp}^{0}(X)\otimes R \xrightarrow{\ \sim\ } \widetilde H^{\mathrm{even}}(X;R),
\qquad
\ch:\ \widetilde{K}_{\topp}^{-1}(X)\otimes R \xrightarrow{\ \sim\ } \widetilde H^{\mathrm{odd}}(X;R).
\]
\end{lemma}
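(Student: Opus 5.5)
The plan is a five-lemma argument on the long exact sequences of the cofibration $A\to X\to X/A$ with $A=X^{(d-1)}$. Reduced complex $K$-theory is $2$-periodic, so its long exact sequence folds into a six-term cyclic exact sequence
\[
\widetilde K^{-1}(X/A)\to \widetilde K^{-1}(X)\to \widetilde K^{-1}(A)\xrightarrow{\delta_K}\widetilde K^{0}(X/A)\to \widetilde K^{0}(X)\to \widetilde K^{0}(A)\xrightarrow{\delta_K}\widetilde K^{1}(X/A)\cong\widetilde K^{-1}(X/A).
\]
Since $R=\mathbf{Z}[1/M!]$ is flat over $\mathbf{Z}$, the sequence stays exact after applying $-\otimes_{\mathbf{Z}}R$.

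On the cohomology side, I take the long exact sequence of $\widetilde H^*(-;R)$ and sum it over even and odd degrees. This gives the analogous six-term cyclic exact sequence in $\widetilde H^{\mathrm{even}}$ and $\widetilde H^{\mathrm{odd}}$, with $\delta_H$ raising parity.

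Next, I apply the Chern character vertically, using Lemma~\ref{chint} on $A$, $X$ and $X/A$ (all of dimension $\le d$), so that every vertical map is defined over $R$. I then check that each square commutes:

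\begin{itemize}
\item The squares involving the restriction $\widetilde K(X)\to\widetilde K(A)$ and the projection $\widetilde K(X/A)\to\widetilde K(X)$ commute by naturality of $\ch$.
\item The squares involving $\delta$ commute by Theorem~\ref{chbdy}.
\end{itemize}

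For the wrap-around square $\widetilde K^0(A)\to\widetilde K^1(X/A)$, I identify $\widetilde K^1(X/A)$ with $\widetilde K^{-1}(X/A)$ via Bott periodicity. I also need $\ch$ to be compatible with this identification; this holds for the standard normalization, since $\widetilde H^{[1]}=\widetilde H^{[-1]}$. Alternatively, this square is exactly the $m=0$ case of Theorem~\ref{chbdy}, read through the periodicity identification.

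Finally, I use the isomorphism inputs:

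\begin{itemize}
\item The vertical maps for $A$ are isomorphisms by hypothesis.
\item The vertical maps for $X/A$ are isomorphisms by Lemma~\ref{cofibch}.
\end{itemize}

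To apply the five lemma at $\widetilde K^0(X)\otimes R$, I cut the cyclic sequence into the five-term window
\[
\widetilde K^{-1}(A)\to\widetilde K^{0}(X/A)\to\widetilde K^{0}(X)\to\widetilde K^{0}(A)\to\widetilde K^{1}(X/A).
\]
In this window, the four outer vertical maps are isomorphisms, so the middle one is as well. The same argument with the window centered at $\widetilde K^{-1}(X)\otimes R$ handles the odd part.

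The main obstacle is the bookkeeping in the wrap-around square. There I must ensure that the Bott periodicity identification $\widetilde K^{1}\cong\widetilde K^{-1}$ and the parity-folding of cohomology are compatible under $\ch$. This is also the one place where I am relying on the normalization convention rather than on the paper's stated results. If a sign discrepancy appears there, it is harmless, since a sign does not affect whether a map is an isomorphism.
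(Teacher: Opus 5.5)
Your proposal is correct and follows the paper's proof. Like the paper, you fold the long exact sequences of $A\to X\to X/A$ into $2$-periodic exact sequences and get a morphism of exact sequences from naturality and Theorem~\ref{chbdy}. You then apply the five lemma, with the $A$-terms isomorphisms by hypothesis and the $X/A$-terms isomorphisms by Lemma~\ref{cofibch}. Your extra points (flatness of $R$, the explicit five-term windows, and the wrap-around square via the $m=0$ case of Theorem~\ref{chbdy}) only spell out what the paper leaves implicit.
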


\begin{proof}
Consider the cofibration $A\to X\to X/A$ and the associated long exact sequences in reduced $K$-theory and in reduced
cohomology with coefficients in $R$. Splicing the even/odd degrees, we obtain $2$-periodic exact sequences
\[
\begin{aligned}
\cdots\to {}&\widetilde{K}_{\topp}^{-1}(X/A)\to \widetilde{K}_{\topp}^{-1}(X)\to \widetilde{K}_{\topp}^{-1}(A)\xrightarrow{\delta_K}\\
&\widetilde{K}_{\topp}^{0}(X/A)\to \widetilde{K}_{\topp}^{0}(X)\to \widetilde{K}_{\topp}^{0}(A)\xrightarrow{\delta_K}\widetilde{K}_{\topp}^{-1}(X/A)\to\cdots
\end{aligned}
\]
and similarly
\begin{align*}
\cdots &\to \widetilde H^{\mathrm{odd}}(X/A;R)\to \widetilde H^{\mathrm{odd}}(X;R)\to \widetilde H^{\mathrm{odd}}(A;R)\xrightarrow{\delta_H}\\
&\to \widetilde H^{\mathrm{even}}(X/A;R)\to \widetilde H^{\mathrm{even}}(X;R)\to \widetilde H^{\mathrm{even}}(A;R)\xrightarrow{\delta_H}\\
&\to \widetilde H^{\mathrm{odd}}(X/A;R)\to\cdots .
\end{align*}
By Theorem~\ref{chbdy}, the Chern character defines a morphism between these exact sequences.

By assumption, $\ch$ is an isomorphism on the $A$-terms.
By Lemma~\ref{cofibch}, $\ch$ is an isomorphism on the $X/A$-terms.
Applying the five lemma to the corresponding segments of the above exact sequences shows that $\ch$ is an isomorphism
on the $X$-terms, both for $\widetilde{K}_{\topp}^{0}$ and for $\widetilde{K}_{\topp}^{-1}$.
\end{proof}

\begin{proof}[Proof of Theorem~\ref{ktopchdec}]
We proceed by induction on $d=\dim X$. If $d=0$, then $X$ is a finite discrete space. The Chern character identifies $\widetilde{K}_{\topp}^0(X)$ with $\widetilde H^0(X;\Z)$, while both odd groups vanish; after tensoring with $R$, the statement is immediate. Now assume that the theorem holds for all finite CW complexes of dimension at most $d-1$.
Let $X$ be a $d$-dimensional CW complex and set $A:=X^{(d-1)}$.
By the induction hypothesis, the Chern character is an isomorphism on $A$ (after tensoring with $R$).
Applying Lemma~\ref{fivech} yields the desired isomorphisms for $X$.
\end{proof}

\section{\texorpdfstring{Split degeneration of the crystalline-to-$\TP$ spectral sequence}{Split degeneration of the crystalline-to-TP spectral sequence}}
We fix the notation used throughout this section.  Let $p$ be a prime, and
let $K$ be a complete discretely valued nonarchimedean extension of $\Q_p$
with perfect residue field $k$.  Let $\sO_K$ be its ring of integers, and
let $\pi\in\sO_K$ be a uniformizer with ramification index $e$.  We write
$\sC$ for the completed algebraic closure $\widehat{\overline K}$, endowed
with the unique absolute value extending that of $K$.  Let $W=W(k)$, let
$K_0=\mathrm{Frac}(W)$, and put $\ol W:=W(\ol k)$. Fix a compatible system $(\zeta_n)_{n\geq 0}$ with $\zeta_0=1$ and
$\zeta_{n+1}^p=\zeta_n$, put
\[
  \epsilon=(\zeta_0,\zeta_1,\zeta_2,\ldots)
  \in \lim_{\mathrm{Frob}}\sO_\sC/p,
\]
and let $[\epsilon]\in\Ainf$ be its Teichm\"uller lift.  Let
$\mS=W[[z]]$, and let $\tilde\theta:\mS\to\sO_K$ be the usual map whose
kernel is generated by the Eisenstein polynomial $E=E(z)$ of $\pi$.  Let
$\phi:\mS\to\Ainf$ be the $W$-linear map sending $z$ to $[\pi^\flat]$, and
put
\[
  \xi:=\phi(E),
  \qquad
  \mu:=[\epsilon]-1.
\]
Finally, let $\varphi:\mS\to\mS$ be the Frobenius extending the Frobenius on
$W$ and satisfying $\varphi(z)=z^p$, let $\varphi_{\Ainf}$ be the Frobenius
on $\Ainf$, and put $\tilde\xi:=\varphi_{\Ainf}(\xi)$. For a spectrum $S$, write $\LK S$ for its $K(1)$-localization at $p$. 

\subsection{\texorpdfstring{Non-commutative $p$-adic Hodge theory}{Non-commutative p-adic Hodge theory}}
We begin by recording the coefficient rings of topological periodic homology that will be used throughout this subsection. There are canonical isomorphisms
\[
\pi_*\TP(\ol{k};\Z_p)\simeq \ol{W}[u^{\pm 1}],
\]
\[
\pi_*\TP(k;\Z_p)\simeq W[u^{\pm 1}]
\]
and
\[
\pi_*\TP(\sO_{\sC};\Z_p)\simeq \Ainf[u^{\pm 1}].
\]
Let $\sT$ be an idempotent-complete small smooth proper $\sO_K$-linear dg category. In \cite{M26}, the author proved the following.

\begin{thm}[{\cite[Theorem 1.9]{M26}}]\label{ncbms}
There exists an integer $n$ such that the following hold.
\begin{itemize}
  \item[(1)] For every $i\geq n$, the $\mS$-module $\pi_i \TCn(\sT/\bS[z];\Z_p)$ carries a natural structure of a Breuil--Kisin module.

  \item[(2)] \textup{($K(1)$-local $K$-theory comparison)} Assume that $\sT_{\sC}$ admits a geometric realization. Then for every $i\geq n$, after scalar extension along $\ol{\phi}:\mS\to \Ainf$, which is Frobenius on $W$ and sends $z$ to $[\pi^\flat]^p$, one recovers the $K(1)$-local $K$-theory of the generic fibre:
  \[
  \pi_i \TCn(\sT/\bS[z];\Z_p)\otimes_{\mS,\ol{\phi}} \Ainf[\tfrac{1}{\mu}]^{\wedge}_p
  \simeq
  \pi_i\LK K(\sT_{\sC})\otimes_{\Z_p} \Ainf[\tfrac{1}{\mu}]^{\wedge}_p.
  \]

  \item[(3)] \textup{(Topological periodic homology comparison)} For every $i\geq n$, after scalar extension along $\tilde{\phi}:\mS\to W$, which is Frobenius on $W$ and sends $z$ to $0$, one recovers the topological periodic homology of the special fibre:
  \[
  \pi_i \TCn(\sT/\bS[z];\Z_p)[\tfrac{1}{p}] \otimes_{\mS[\tfrac{1}{p}],\tilde{\phi}} K_0
  \simeq
  \pi_i \TP(\sT_k;\Z_p)[\tfrac{1}{p}].
  \]
\end{itemize}
\end{thm}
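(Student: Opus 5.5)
The plan is to build everything from the cyclotomic structure on $\THH(\sT/\bS[z];\Z_p)$ and its relation to the absolute Breuil--Kisin computations for $\sO_K$. First, recall the base computations: $\pi_*\THH(\sO_K/\bS[z];\Z_p)\simeq \sO_K[u]$ with $|u|=2$, $\pi_*\TCn(\sO_K/\bS[z];\Z_p)\simeq \mS[u,v]/(uv-E)$, and $\pi_*\TP(\sO_K/\bS[z];\Z_p)\simeq \mS[\sigma^{\pm1}]$. Under these identifications the canonical map sends $v\mapsto E\sigma^{-1}$, and the Frobenius is $\varphi$-semilinear with $u\mapsto\sigma$. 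Since $\sT$ is smooth and proper over $\sO_K$, $\THH(\sT/\bS[z];\Z_p)$ is a perfect module over $\THH(\sO_K/\bS[z];\Z_p)$. Hence $\TCn(\sT/\bS[z])$ and $\TP(\sT/\bS[z])$ are perfect over the corresponding base objects, and each homotopy group is a finitely generated $\mS$-module.

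For (1), I choose $n$ larger than the top of the $u$-torsion-free range, i.e.\ beyond the amplitude of $\THH(\sT/\bS[z])\otimes_{\sO_K[u]}\sO_K$. In degrees $i\ge n$ two things should hold.
\begin{itemize}
\item[(a)] The Frobenius $\varphi\colon \pi_i\TCn\to\pi_i\TP$ becomes an isomorphism after $\varphi$-linearization. This is a Segal-conjecture type statement: mod $u$ it reduces to the Frobenius on $\THH$ modulo the base, which is an equivalence in high degrees for perfect modules.
\item[(b)] The canonical map $\pi_i\TCn\to\pi_i\TP$ becomes an isomorphism after inverting $E$. This holds because its cofiber is $\Sigma^2\THH_{hS^1}$, which is $u$-power torsion, and $uv=E$.
\end{itemize}
Composing $\varphi^*$ of (a) with the inverse of (b) gives $\varphi^*M_i[1/E]\simeq M_i[1/E]$ for $M_i=\pi_i\TCn$. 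Together with finite generation, this is the Breuil--Kisin structure. Naturality is clear from the construction.

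For (3), base change along $z\mapsto 0$ (composed with Frobenius via $\varphi$) identifies $\TP(\sT/\bS[z])\otimes_{\bS[z]}\bS$ with $\TP(\sT_k;\Z_p)$; this uses base change for $\THH$ and perfectness. Rationally, the Frobenius iso from step (a) transports this to $\TCn$, because after inverting $p$ the module $\pi_i$ commutes with the base change (no Tor terms). For (2), base change along $\ol\phi$ to $\Ainf$ compares with $\TCn(\sT_{\sO_\sC};\Z_p)$, whose homotopy is the $\Ainf$-analogue. After inverting $\mu$ and $p$-completing, I would use the Bhatt--Clausen--Mathew identification $\LK K(\sT_\sC)\simeq \LK\TC(\sT_{\sO_\sC})$ together with $\TC\to\TP[1/\mu]$ comparisons, as in BMS's étale comparison. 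The geometric realization hypothesis supplies the finite-generation needed for the $\Ainf[1/\mu]^\wedge_p$-linear isomorphism.

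The main obstacle is step (a)/(2). Making the Frobenius an equivalence in a uniform range of degrees for a noncommutative $\sT$, and matching the $\mu$-localized $\TC$ with $K(1)$-local $K$-theory linearly over $\Ainf$, requires controlling $\lim$ and $p$-completion issues. I would first try to reduce to the case $\sT=\perf(X)$ by the geometric realization, and otherwise use the perfect-module reduction to the base $\sO_K$ computations.
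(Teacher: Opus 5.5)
The paper gives no proof of this statement: it is imported verbatim from \cite[Theorem 1.9]{M26} and used as a black box, so your outline has to be judged on its own.

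\textbf{The main gap: the Frobenius linearization in (a).} Your architecture is the natural one: a linearized Frobenius, $\can$ becoming an isomorphism after inverting $E$, then specialization along $z\mapsto 0$ and along $\mS\to\Ainf$. But the step everything rests on is not established.

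\begin{itemize}
\item In (a) you justify $\varphi^*\pi_i\TCn(\sT/\bS[z];\Z_p)\simeq\pi_i\TP(\sT/\bS[z];\Z_p)$ by saying the Frobenius on $\THH$ ``is an equivalence in high degrees for perfect modules''.
\item Earlier you infer that $\TCn(\sT/\bS[z])$ and $\TP(\sT/\bS[z])$ are perfect over the base objects, because $\THH(\sT/\bS[z];\Z_p)$ is perfect over $R:=\THH(\sO_K/\bS[z];\Z_p)$. The perfectness of $\THH(\sT/\bS[z];\Z_p)$ itself is correct.
\end{itemize}

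Neither inference is valid. Homotopy fixed points and the Tate constructions $(-)^{tC_p}$, $(-)^{tS^1}$ are not controlled by the underlying module. For example, $M=R\otimes\Sigma^\infty_+S^1$ with the translation action has underlying module $R\oplus\Sigma R$. It even carries a compatible (zero) Frobenius, yet $M^{tC_p}\simeq 0$, whereas $M\otimes_R R^{tC_p}\neq 0$.

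What is actually needed is the following. The category $\sT$ is dualizable in $\sO_K$-linear categories, and the source and target of each comparison map must be \emph{symmetric monoidal} functors on smooth proper categories. The comparison maps in question are the linearized Frobenius $\THH(\sT/\bS[z])\otimes_{R,\varphi}R^{tC_p}\to\THH(\sT/\bS[z])^{tC_p}$, its $\TCn[1/u]\to\TP$ version, and the base changes to $\Ainf$. Then \cite[Proposition 4.6]{AMN18} forces each such symmetric monoidal transformation to be an equivalence.

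Proving strong monoidality of the Tate-type functors is the real content here. Equivalently, one shows that $\THH(\sT/\bS[z])$ lies in the thick subcategory generated by the unit of $\Mod_{R}(\Sp^{BS^1})$. This is exactly what the paper invokes, via \cite{M26} and \cite{AMN18}, whenever it needs such comparisons (Propositions~\ref{tplength}, \ref{hplength}, Theorem~\ref{hpdege}).

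Two smaller slips:
\begin{itemize}
\item The reduction should be modulo $v$, not $u$. Since $\varphi(u)=\sigma$ is a unit, reducing mod $u$ kills the target. By contrast, $\pi_*\bigl(\TCn(\sO_K/\bS[z];\Z_p)/v\bigr)\simeq\sO_K[u]$ and $\varphi(v)=\varphi(E)\sigma^{-1}$.
\item $\can$ sends $u\mapsto E\sigma$ and $v\mapsto\sigma^{-1}$, so the cofiber $\Sigma^2\THH_{hS^1}$ is $v$-power torsion, not $u$-power torsion. Your conclusion in (b) survives because $uv=E$.
\end{itemize}

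\textbf{Parts (3) and (2).} Granting (a), your argument for (3) is fine. Indeed $\TP(\sT/\bS[z])/z\simeq\TP(\sT_k)$, since $z$ acts $S^1$-equivariantly, and the $\Tor$ term dies after inverting $p$ because a Breuil--Kisin module becomes projective over $\mS[1/p]$.

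Part (2), however, is only a list of tools. Because $\ol\phi$ contains a Frobenius twist, the base change along $\ol\phi$ should be identified, again by the monoidal argument, with $\pi_i\TP(\sT_{\sO_\sC};\Z_p)$ via $\varphi$, not with $\TCn(\sT_{\sO_\sC})$. You then still have to run the \'etale comparison:
\begin{itemize}
\item establish $\LK K(\sT_\sC)\simeq\LK K(\sT_{\sO_\sC})\simeq\LK\TC(\sT_{\sO_\sC})$;
\item identify $\pi_i$ of the latter with the $\varphi$-invariants of the \'etale $\varphi$-module $\pi_i\TP(\sT_{\sO_\sC};\Z_p)\otimes_{\Ainf}W(\sC^\flat)$, using $\TC=\fib(\varphi-\can)$ and that $K(1)$-localization corresponds to inverting $\mu$ on the $\TP$ side;
\item extract finite generation from the geometric realization, so that the equivalence between \'etale $\varphi$-modules over $W(\sC^\flat)=\Ainf[1/\mu]^\wedge_p$ and finitely generated $\Z_p$-modules yields the linear isomorphism.
\end{itemize}

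As it stands, (1) and (3) hinge on a Frobenius linearization that is unproved and wrongly justified, and (2) is a plan rather than a proof.
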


\begin{lemma}\label{tpinj}
Let $\sM$ be a $\TP(\sO_\sC;\Z_p)$-module such that $\pi_j(\sM[1/p])$ is a free $\Ainf[1/p]$-module for every $j$. Then the natural map
\[
\pi_j(\sM)\otimes_{\Ainf}\ol{W}
\longrightarrow
\pi_j\bigl(\sM\otimes^\L_{\TP(\sO_\sC;\Z_p)} \TP(\ol{k};\Z_p)\bigr)
\]
is injective, and becomes an isomorphism after inverting $p$.
\end{lemma}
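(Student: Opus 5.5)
The map $\TP(\sO_\sC;\Z_p)\to\TP(\ol k;\Z_p)$ induces on homotopy the surjection $\Ainf[u^{\pm1}]\to\ol W[u^{\pm1}]$, and I would realize its kernel by a cofiber sequence of $\TP(\sO_\sC;\Z_p)$-modules. The kernel of $\theta_0\colon\Ainf\to W(\ol k)$ is $W(\mathfrak m^\flat)$, which is not finitely generated, so I would not use a single Koszul step. Instead I would exploit Tor-amplitude. Write $A=\TP(\sO_\sC;\Z_p)$ and $B=\TP(\ol k;\Z_p)$. Because $A$ and $B$ are even with $\pi_*B=\pi_*A\otimes_{\Ainf}\ol W$ (a $u$-periodic statement), there is a K\"unneth/Tor spectral sequence
\[
E^2_{s,t}=\Tor^{\Ainf[u^{\pm1}]}_{s}\bigl(\pi_*\sM,\ \ol W[u^{\pm1}]\bigr)_t\Longrightarrow \pi_{s+t}\bigl(\sM\otimes^\L_A B\bigr).
\]
By periodicity, $E^2_{s,*}=\Tor^{\Ainf}_s(\pi_*\sM,\ol W)$. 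The edge map from the row $s=0$ is exactly the map in the statement. Its injectivity amounts to showing that no differential $d_r$ with $r\ge2$ hits the row $s=0$.

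The first step is to bound the Tor-dimension. Since $\Ainf$ is a $2$-dimensional-ish non-noetherian ring, I would avoid general facts. Note that $\ol W$ is $p$-torsion free and that $\ol W=\Ainf/W(\mathfrak m^\flat)$. I would argue that $\Tor^{\Ainf}_s(N,\ol W)$ is $p$-torsion for $s\ge1$ whenever $N[1/p]$ is free over $\Ainf[1/p]$. Indeed, localization gives $\Tor^{\Ainf}_s(N,\ol W)[1/p]=\Tor^{\Ainf[1/p]}_s(N[1/p],\ol W[1/p])=0$. The second step then shows that the differentials into the row $s=0$ vanish. A differential $d_r\colon E^r_{r,*}\to E^r_{0,*+r-1}$ has $p$-power torsion source and lands in a subquotient of $\pi_*\sM\otimes\ol W$. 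So it suffices that this target has no $p$-torsion that can be hit, which is not automatic.

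I expect this to be the main obstacle, and I would instead argue without the spectral sequence, by rationalizing directly. We have $(\sM\otimes_A B)[1/p]=\sM[1/p]\otimes_{A[1/p]}B[1/p]$. Since $\pi_*\sM[1/p]$ is free over $\pi_*A[1/p]=\Ainf[1/p][u^{\pm1}]$, choosing generators gives a map from a direct sum of shifts of $A[1/p]$ to $\sM[1/p]$. By periodicity we only need even and odd summands, and this map is an equivalence. Hence $\pi_j(\sM\otimes_A B)[1/p]\cong\pi_j(\sM)[1/p]\otimes\ol W$, which proves the rational isomorphism.

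For injectivity, I would factor the map as
\[
\pi_j(\sM)\otimes_{\Ainf}\ol W\to\pi_j(\sM\otimes_A B)\to\pi_j(\sM\otimes_A B)[1/p]\cong\pi_j(\sM)[1/p]\otimes\ol W .
\]
It then suffices that $\pi_j(\sM)\otimes_{\Ainf}\ol W\to(\pi_j(\sM)\otimes_{\Ainf}\ol W)[1/p]$ is injective, i.e.\ that $\pi_j(\sM)\otimes\ol W$ is $p$-torsion free. This is where I would need to use the structure of $\pi_j(\sM)$. The hypothesis controls only the rationalization, so I would try to show that the $p$-torsion of $\pi_j\sM\otimes\ol W$ maps injectively using the Tor-vanishing of the first step. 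That is, I would compare $\Tor_1(\pi_j\sM,\ol W/p)$ with the long exact sequence of $\sM\otimes_A B\xrightarrow{p}\sM\otimes_A B$. This comparison, or possibly an implicit extra hypothesis on $p$-torsion, is the crux.
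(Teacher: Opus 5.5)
Your rational isomorphism argument is correct. Freeness of $\pi_*(\sM[1/p])$ over $\Ainf[1/p][u^{\pm1}]$ makes $\sM[1/p]$ a sum of shifts of $\TP(\sO_\sC;\Z_p)[1/p]$, so base change can be computed summandwise. The gap is injectivity, which you leave open, and the route you sketch cannot close it. Comparing with the rationalization only sees torsion-free information. Take $\sM=\TP(\sO_\sC;\Z_p)/p$: the hypothesis holds since $\sM[1/p]=0$, and $\pi_0(\sM)\otimes_{\Ainf}\ol W=\ol k$ dies after inverting $p$, yet the map in the lemma is an isomorphism. So $p$-torsion-freeness of $\pi_j(\sM)\otimes_{\Ainf}\ol W$ is the wrong criterion, and no extra hypothesis is needed. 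In the application (Proposition~\ref{tplength}) the torsion is precisely what is being measured. The K\"unneth spectral sequence runs into the same wall. You would have to exclude $p^\infty$-torsion differentials landing in a row that itself carries $p$-torsion, and the hypothesis on $\sM[1/p]$ gives no handle on that.

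The missing idea is that single Koszul steps do work, provided you take a colimit of them and push the remaining error into a rational term. The paper does this as follows.
\begin{itemize}
\item Write $E=\TP(\sO_\sC;\Z_p)$, pick $x=[a]$ with $a\in\sO_\sC^\flat$, $0<|a|<1$, and put $\sW:=\colim_n E/(x^{1/p^n})$. Then $\pi_*\sW=\widetilde W[u^{\pm1}]$ with $\widetilde W:=\colim_n\Ainf/(x^{1/p^n})$.
\item For one element, the long exact sequence of $\sM\xrightarrow{x^{1/p^n}}\sM\to\sM\otimes^\L_E E/(x^{1/p^n})$ gives $\pi_j(\sM)/x^{1/p^n}\hookrightarrow\pi_j(\sM\otimes^\L_E E/(x^{1/p^n}))$ with no hypothesis whatsoever. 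This survives the filtered colimit, so $\pi_j(\sM)\otimes_{\Ainf}\widetilde W\hookrightarrow\pi_j(\sM\otimes^\L_E\sW)$.
\item The discrepancy $Q:=\ker(\widetilde W\to\ol W)$ is a $\Q_p$-vector space. Modulo $J=\bigcup_n x^{1/p^n}\Ainf$, each element of $W(\mathfrak m^\flat)$ is $p$ times another one, and $J$ is $p$-saturated; cf.\ \cite[Lemma 4.16]{BMS18}.
\item Hence $\sQ:=\fib(\sW\to\TP(\ol k;\Z_p))$ is an $E[1/p]$-module, and your rational freeness argument gives exactly $\pi_j(\sM)\otimes_{\Ainf}Q\simeq\pi_j(\sM\otimes^\L_E\sQ)$.
\item Compare the right exact sequence $\pi_j(\sM)\otimes Q\to\pi_j(\sM)\otimes\widetilde W\to\pi_j(\sM)\otimes\ol W\to0$ with the homotopy of the fibre sequence $\sM\otimes^\L_E\sQ\to\sM\otimes^\L_E\sW\to\sM\otimes^\L_E\TP(\ol k;\Z_p)$. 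The left vertical map is an isomorphism and the middle one is injective, so a diagram chase gives injectivity on the right.
\end{itemize}
In short, the hypothesis on $\sM[1/p]$ should be applied only to the rational error term $Q$, not to the whole target.
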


\begin{proof}
For brevity, write $E := \TP(\sO_\sC;\Z_p)$ and $T := \TP(\ol{k};\Z_p)$. Choose an element $a\in\sO_\sC^\flat$ with $0<|a|<1$, for example $a=\pi^\flat$, and let $x=[a]\in\Ainf$ be its Teichm\"uller lift.  Then $x$ is a nonzero nonunit of $\Ainf$, and its image in $\ol W=W(\ol k)$ is zero, since $a$ has positive valuation.  Set $\widetilde{W}:= \colim_n \Ainf/(x^{1/p^n})$. Consider the $E$-module $\sW:= \hocolim_{n} E/(x^{1/p^n})$. Then
\[
\pi_i \sW \simeq
\begin{cases}
\widetilde{W} & \text{if $i$ is even,}\\
0 & \text{if $i$ is odd.}
\end{cases}
\]

We first show that the natural map
\[
\pi_j(\sM)\otimes_{\Ainf}\widetilde{W}
\longrightarrow
\pi_j\bigl(\sM\otimes_E^\L \sW\bigr)
\]
is injective. Put $E_n:=E/(x^{1/p^n})$. For each $n$, the cofibre sequence
\[
\sM \xrightarrow{x^{1/p^n}} \sM \to \sM\otimes_E^\L E_n
\]
induces an exact sequence
\[
\pi_j(\sM)\xrightarrow{x^{1/p^n}}\pi_j(\sM)\to
\pi_j(\sM\otimes_E^\L E_n)\to
\pi_{j-1}(\sM)\xrightarrow{x^{1/p^n}}\pi_{j-1}(\sM).
\]
It follows that the induced map
\[
\pi_j(\sM)/x^{1/p^n}\pi_j(\sM)
\longrightarrow
\pi_j(\sM\otimes_E^\L E_n)
\]
is injective. Since
\[
\pi_j(\sM)/x^{1/p^n}\pi_j(\sM)
\simeq
\pi_j(\sM)\otimes_{\Ainf}\Ainf/(x^{1/p^n}),
\]
we obtain an injective map
\[
\pi_j(\sM)\otimes_{\Ainf}\Ainf/(x^{1/p^n})
\longrightarrow
\pi_j(\sM\otimes_E^\L E_n)
\]
for every $n$. Passing to the filtered colimit over $n$, we obtain the claim.

Since the image of $x$ in $\ol{W}$ is zero, the map $E=\TP(\sO_{\sC};\Z_p) \to \TP(\ol{k};\Z_p)=T$ factors through each quotient $E_n$, and passing to the filtered homotopy colimit yields a natural $E$-linear map $\sW \to T$. Let $\sQ := \fib(\sW \to T)$. Then
\[
\pi_i \sQ \simeq
\begin{cases}
Q & \text{if $i$ is even,}\\
0 & \text{if $i$ is odd,}
\end{cases}
\qquad
Q:=\ker(\widetilde{W}\to \ol{W}).
\]
By \cite[Lemma 4.16]{BMS18}, $p$ is invertible in $Q$. It follows that $\sQ$ is naturally an $E[1/p]$-module. Since each $\pi_j(\sM[1/p])$ is free over $\Ainf[1/p]$, we obtain
\[
\pi_j(\sM)\otimes_{\Ainf} Q
\simeq
\pi_j(\sM\otimes^\L_E \sQ).
\]
We therefore obtain a commutative diagram
\[
\xymatrix{
\pi_j(\sM)\otimes_{\Ainf} Q \ar[r] \ar[d]_{a}^{\sim}
&
\pi_j(\sM)\otimes_{\Ainf} \widetilde{W} \ar[r] \ar[d]_{b}
&
\pi_j(\sM)\otimes_{\Ainf} \ol{W} \ar[d] \ar[r]
&
0
\\
\pi_j(\sM\otimes^\L_E \sQ) \ar[r]
&
\pi_j(\sM\otimes^\L_E \sW) \ar[r]
&
\pi_j(\sM\otimes^\L_E T),&
}
\]
in which both rows are exact, $a$ is an isomorphism, and $b$ is injective. This proves the lemma.
\end{proof}

\begin{prop}\label{tplength}
Let $\sT$ be an idempotent-complete small smooth proper $\sO_K$-linear dg category, and assume that $\sT_\sC$ admits a geometric realization. Then for every integer $n>0$, one has
\[
\len_{W}\bigl(\TP_i(\sT_k;\Z_p)/p^n \bigr)
\geq
\len_{\Z_p}\bigl( \LK K_i(\sT_{\sC})/p^n \bigr).
\]
\end{prop}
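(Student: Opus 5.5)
The plan is to compare both sides through the $\TP(\sO_\sC;\Z_p)$-module $\sM:=\TP(\sT_{\sO_\sC};\Z_p)$. Its specialization to $\ol k$ recovers the special fibre, and its localization at $\mu$ recovers $K(1)$-local $K$-theory. The desired inequality should then follow from the standard Bhatt--Morrow--Scholze torsion inequality for Breuil--Kisin--Fargues modules.

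\emph{Reductions.} Both sides are $2$-periodic in $i$: $\TP$ is a module over $\TP(k;\Z_p)$, in which $u$ is invertible, and $\LK K$ is Bott periodic. Hence we may assume $i\ge n_0$, where $n_0$ is the integer of Theorem~\ref{ncbms}. Since $W\to\ol W$ is faithfully flat and $\TP(\sT_{\ol k};\Z_p)\simeq \TP(\sT_k;\Z_p)\otimes^{\L}_{\TP(k;\Z_p)}\TP(\ol k;\Z_p)$, we have $\TP_i(\sT_{\ol k};\Z_p)\simeq \TP_i(\sT_k;\Z_p)\otimes_W\ol W$, and lengths mod $p^n$ are preserved. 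So it suffices to treat $\ol k$.

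\emph{Structure of $\pi_i\sM$.} Base-changing the Breuil--Kisin module $\fM_i:=\pi_i\TCn(\sT/\bS[z];\Z_p)$ of Theorem~\ref{ncbms}(1) along $\ol\phi$ identifies $\pi_i\sM$ with the Breuil--Kisin--Fargues module $N_i:=\fM_i\otimes_{\mS,\ol\phi}\Ainf$; this uses the base change of $\TCn$ to $\TP$ over $\sO_\sC$, as in \cite{M26}. Since $\fM_i[1/p]$ is free over $\mS[1/p]$ by the standard structure theory of Breuil--Kisin modules, $N_i[1/p]$ is free over $\Ainf[1/p]$. Lemma~\ref{tpinj} therefore applies and gives an injection
\[
N_i\otimes_{\Ainf}\ol W\hookrightarrow \pi_i\bigl(\sM\otimes^{\L}_{\TP(\sO_\sC;\Z_p)}\TP(\ol k;\Z_p)\bigr)\simeq \TP_i(\sT_{\ol k};\Z_p).
\]
Both modules are finitely generated over the DVR $\ol W$. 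For such modules $\len(B/p^n)=\len(B[p^n])$, and an injection $A\hookrightarrow B$ gives $A[p^n]\subseteq B[p^n]$. Hence
\[
\len_{\ol W}\bigl(\TP_i(\sT_{\ol k};\Z_p)/p^n\bigr)\ \ge\ \len_{\ol W}\bigl((N_i\otimes\ol W)/p^n\bigr).
\]

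\emph{Comparison with the étale side.} By Theorem~\ref{ncbms}(2), $N_i\otimes_{\Ainf}\Ainf[1/\mu]^\wedge_p\simeq T_i\otimes_{\Z_p}\Ainf[1/\mu]^\wedge_p$, where $T_i:=\LK K_i(\sT_\sC)$. For a finitely presented Breuil--Kisin--Fargues module, the BMS argument \cite[Prop.~4.3, Cor.~4.4]{BMS18} should give
\[
\len_{\ol W}\bigl((N_i\otimes\ol W)/p^n\bigr)\ \ge\ \len_{\Z_p}(T_i/p^n),
\]
with equality of ranks. The argument passes to the generic point of the special fibre: over the local ring of $\Ainf$ at $(p)$, whose completion is a DVR, both the étale and the crystalline specializations compute the same torsion. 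One then shows that torsion can only increase when specializing to $\ol W$, using semicontinuity of the fibre dimensions of $N_i/p^n$ on $\Spec \Ainf/p^n$. Combining this with the previous display yields the proposition.

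\emph{Main obstacle.} The main obstacle is this last step. One must verify that $N_i$ is finitely presented with $N_i[1/p]$ free, so that the BMS inequality applies to the $p^n$-quotient lengths and not only to $\dim_{\ol k}$ of $p$-torsion. I would first try to deduce the statement mod $p^n$ by applying the BMS semicontinuity argument to the perfect complex $N_i\otimes^{\L}\Ainf/p^n$. Failing that, I would use the elementary-divisor description of $N_i$ over the local ring at $(p)$, as in \cite[Lemma 4.9]{BMS18}.
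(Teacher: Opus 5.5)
Your proposal follows the paper's proof essentially step for step. You reduce to $\ol k$ by flat base change along $W\to\ol W$. You get the injection $\pi_i(\sM)\otimes_{\Ainf}\ol W\hookrightarrow\TP_i(\sT_{\ol k};\Z_p)$ from Lemma~\ref{tpinj}, with freeness after inverting $p$ supplied by Theorem~\ref{ncbms}. You then apply the Bhatt--Morrow--Scholze torsion inequality to the Breuil--Kisin--Fargues module $\pi_i(\sM)$, whose \'etale realization is $\LK K_i(\sT_\sC)$.

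The step you flag as the main obstacle is exactly \cite[Corollary 4.15]{BMS18}, which the paper simply cites. Its hypotheses (finite presentation, and freeness after inverting $p$) are immediate from your base-change description, so you do not need the semicontinuity argument.

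One small correction: for a finitely generated $\ol W$-module $B$ one has $\len(B/p^n)=n\cdot\operatorname{rk}(B)+\len(B[p^n])$, not $\len(B[p^n])$. The inequality you want still follows, because the map from Lemma~\ref{tpinj} is an isomorphism after inverting $p$, so both sides have the same rank.
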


\begin{proof}
Since both $\TP(\sT_k;\Z_p)$ and $\LK K(\sT_{\sC})$ are $2$-periodic, we may assume that $i$ is sufficiently large. The natural surjection $\sO_\sC \to \ol{k}$ induces a morphism
\begin{equation}\label{tpOCk}
\TP(\sT_{\sO_{\sC}};\Z_p)\otimes^{\L}_{\TP(\sO_{\sC};\Z_p)} \TP(\ol{k};\Z_p)
\longrightarrow
\TP(\sT_{\ol{k}};\Z_p).
\end{equation}
By \cite[Proposition 2.12]{M26} and \cite[Proposition 3.2]{M26}, both sides of \eqref{tpOCk} define symmetric monoidal functors from the category of smooth proper $\sO_K$-linear categories to $\text{Mod}_{\TP(\ol{k};\Z_p)}(\Sp)$, and \eqref{tpOCk} defines a symmetric monoidal natural transformation between them. It follows that \eqref{tpOCk} is an equivalence (see \cite[Proposition 4.6]{AMN18}).

For brevity, write $\sM:=\TP(\sT_{\sO_{\sC}};\Z_p)$.  The freeness hypothesis in Lemma~\ref{tpinj} is satisfied for this $\sM$ after replacing $i$ by a sufficiently large integer using $2$-periodicity, by Theorem~\ref{ncbms}.  Hence, by Lemma~\ref{tpinj}, the natural morphism
\[
\pi_i(\sM)\otimes_{\Ainf}\ol{W}
\longrightarrow
\pi_i\bigl(\sM\otimes^\L_{\TP(\sO_\sC;\Z_p)} \TP(\ol{k};\Z_p)\bigr)
\xrightarrow[\sim]{\eqref{tpOCk}}
\pi_i \TP(\sT_{\ol{k}};\Z_p)
\]
is injective, and its cokernel is killed by a power of $p$. In particular,
\[
\len_{\ol{W}}\bigl(\pi_i \TP(\sT_{\ol{k}};\Z_p)/p^n \bigr)
\geq
\len_{\ol{W}}\bigl((\pi_i(\sM)\otimes_{\Ainf}\ol{W})/p^n \bigr).
\]
Combining \cite[Corollary 4.15]{BMS18}, Theorem~\ref{ncbms}, and \cite[Theorem 3.5]{M26}, we obtain
\begin{equation}\label{lenKcmp}
\len_{\ol{W}}\bigl(\pi_i \TP(\sT_{\ol{k}};\Z_p)/p^n \bigr)
\geq
\len_{\Z_p}\bigl( \LK K_i(\sT_{\sC})/p^n \bigr).
\end{equation}

Next, the base-change map $k \to \ol{k}$ induces a morphism
\begin{equation}\label{tpkolk}
\TP(\sT_k;\Z_p)\otimes^\L_{\TP(k;\Z_p)}\TP(\ol{k};\Z_p)
\longrightarrow
\TP(\sT_{\ol{k}};\Z_p).
\end{equation}
By the same argument as above, \eqref{tpkolk} is an equivalence. Since $W\to \ol{W}$ is flat, it follows that
\[
\pi_i \TP(\sT_k;\Z_p)\otimes_W \ol{W}
\simeq
\pi_i \TP(\sT_{\ol{k}};\Z_p).
\]
Hence
\[
\len_{W}\bigl(\TP_i(\sT_k;\Z_p)/p^n\bigr)
=
\len_{\ol{W}}\bigl(\pi_i \TP(\sT_{\ol{k}};\Z_p)/p^n\bigr).
\]
Combining this with \eqref{lenKcmp}, we obtain the claim.
\end{proof}
The following is the de Rham analogue of Proposition~\ref{tplength}.

\begin{prop}\label{hplength}
Let $\sT$ be an idempotent-complete small smooth proper $\sO_K$-linear dg category, and assume that $\sT_\sC$ admits a geometric realization. Then for every integer $n>0$, one has
\[
\frac{1}{e}\cdot \len_{\sO_K}\bigl(\HP_i(\sT/\sO_K)/p^n\bigr)
\geq
\len_{\Z_p}\bigl(\LK K_i(\sT_\sC)/p^n\bigr).
\]
\end{prop}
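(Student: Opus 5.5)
Since $\HP(\sT/\sO_K;\Z_p)$ and $\LK K(\sT_\sC)$ are both $2$-periodic, I may take $i$ as large as I like, in particular $i\ge n$ with $n$ as in Theorem~\ref{ncbms}. The plan mirrors Proposition~\ref{tplength}. The Breuil--Kisin module $\fM_i:=\pi_i\TCn(\sT/\bS[z];\Z_p)$ will be specialized in two directions: along $\tilde\theta\colon\mS\to\sO_K$ (composed with $\varphi$, i.e.\ the prism map $\mS\to\mS/\varphi(E)\simeq \sO_K$) to reach $\HP$, and along $\ol\phi$ to $\Ainf[1/\mu]^\wedge_p$ to reach $\LK K$. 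Both lengths will then be compared to invariants of $\fM_i$.

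\emph{Step 1 (de Rham specialization).} I will use the standard identification
\[
\TP(\sT/\bS[z];\Z_p)\otimes^{\L}_{\TP(\bS[z])}\HP(\sO_K/\sO_K)\simeq\HP(\sT/\sO_K;\Z_p),
\]
or equivalently $\TCn(\sT/\bS[z])\otimes_{\TCn(\sO_K/\bS[z])}\HC^-(\sO_K/\sO_K)$, which comes from the symmetric monoidal base-change argument of \cite[Proposition 4.6]{AMN18} as used in the proof of Proposition~\ref{tplength}. Since $\pi_*\TCn(\sO_K/\bS[z];\Z_p)$ is $\mS[u,v]/(uv-E)$, the base change is the quotient by a single element lifting $\varphi(E)$, up to the twist identifying $\TCn$ with $\TP$ in high degrees. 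The resulting cofibre sequence gives an injection
\[
\fM_i/\varphi(E)\fM_i\hookrightarrow \HP_i(\sT/\sO_K;\Z_p),
\]
and this injection becomes an isomorphism after inverting $p$, because $\fM_i[1/p]$ is free over $\mS[1/p]$ by the Breuil--Kisin structure. This step parallels Lemma~\ref{tpinj} but is simpler, since the quotient is by a single regular element.

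\emph{Step 2 (length comparison).} From Step~1 I get
\[
\len_{\sO_K}\bigl(\HP_i/p^n\bigr)\ge \len_{\sO_K}\bigl((\fM_i\otimes_{\mS}\sO_K)/p^n\bigr).
\]
Now write $\fM_i\simeq \mS^{r}\oplus\fM_{\tors}$, up to a finite-length discrepancy that I will handle with the structure of Breuil--Kisin modules, namely that torsion $\fM$ has projective dimension $\le1$ with $u$-torsion-free $p$-power torsion. The free part contributes $\len_{\sO_K}(\sO_K/p^n)^r=ner$. For a torsion piece of the form $\mS/p^a$, the contribution is $e\min(a,n)$. On the other side, Theorem~\ref{ncbms}(2) says $\fM_i\otimes_{\mS,\ol\phi}\Ainf[1/\mu]^\wedge_p\simeq\LK K_i(\sT_\sC)\otimes\Ainf[1/\mu]^\wedge_p$. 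The "étale" invariants, namely the rank and the $p$-adic elementary divisors of a Breuil--Kisin module, are preserved under this base change, as in \cite[Corollary 4.15]{BMS18} and \cite[Theorem 3.5]{M26}, which were already invoked for \eqref{lenKcmp}. Hence $\len_{\Z_p}(\LK K_i/p^n)$ equals the analogous count $nr+\sum\min(a_j,n)$ computed from $\fM_i$, and the factor $e$ appears exactly as claimed.

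\emph{Main obstacle.} The torsion part of $\fM_i$ is generally not a direct sum of $\mS/p^a$; it is only a $p$-power torsion $\mS$-module of projective dimension one. I therefore need the inequality
\[
\len_{\sO_K}(\fM/(E,p^n))\ \ge\ e\cdot \len_{\Z_p}\bigl(T(\fM)/p^n\bigr)
\]
for such $\fM$, where $T$ is the étale realization. My first approach is to filter $\fM$ by its $p$-adic dévissage $p^j\fM/p^{j+1}\fM$. This reduces the problem to $\fM$ killed by $p$, which is then a finite free $k[[z]]$-module of rank $d$ with $T$ of $\mathbb F_p$-dimension $d$, and there $\fM/E=(k[[z]]/z^e)^d$ has length $ed$. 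Right-exactness of $\otimes\sO_K$ gives only inequalities, which is why the statement is itself an inequality; I would track the directions carefully, using the injection from Step~1 and the fact that $E$ is a nonzerodivisor on $u$-torsion-free modules, to make sure every inequality points the right way.
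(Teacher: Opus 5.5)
\emph{Verdict: there is a genuine gap.} The key inequality is not proved, the d\'evissage you sketch for it fails, and Step~1 uses the wrong specialization.

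\textbf{Comparison with the paper.} Your route differs from the paper's. The paper never specializes $\fM_i$ over $\mS$. Instead it base changes to $\sO_\sC$, uses $\HP(\sO_\sC/\sO_\sC;\Z_p)\simeq\TP(\sO_\sC;\Z_p)/\xi$ to get the sequence \eqref{hpseq}, and then quotes \cite[Lemma 7.11]{CK19}. That lemma bounds the length over $W(\sC^\flat)$ of a torsion Breuil--Kisin--Fargues module by the valuation of its base change to $\sO_\sC$; the factor $1/e$ is just $\val(\pi)=1/e$. Your ``main obstacle'' is exactly the $\mS$-analogue of that lemma, and it is the whole content of the proposition.

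\textbf{The gap.}
\begin{itemize}
\item There is no ramification hypothesis here, so there is no decomposition $\fM_i\simeq\mS^r\oplus\bigoplus_j\mS/p^{a_j}$. Hence there are no ``$p$-adic elementary divisors'' of $\fM_i$ to transport to $\LK K_i(\sT_\sC)$; the results you cite only yield inequalities such as \eqref{lenKcmp}. That decomposition is precisely what Theorem~\ref{bkstruct} and Proposition~\ref{modfieq} extract from $er<p-1$.
\item Your $p$-adic d\'evissage does not reduce to finite free $k[[z]]$-modules, because nothing forces $p^j\fM/p^{j+1}\fM$ to be $z$-torsion-free. For example, the cokernel of $\begin{pmatrix}p^2&z\\0&p\end{pmatrix}$ on $\mS^2$ is $p$-power torsion, $z$-torsion-free and of projective dimension one, yet its reduction mod $p$ is $k[[z]]\oplus k$.
\item $\fM_{\tors}$ may even contain nonzero finite-length submodules: $\mS/(p,z)$ satisfies the definition of a Breuil--Kisin module.
\item Since $\len(-/p^n)$ is not additive in short exact sequences, ``tracking the directions'' is not yet an argument.
\end{itemize}

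\textbf{Step~1.} $\mS/\varphi(E)$ is not $\sO_K$: $\varphi(E)(z)=E^{\varphi}(z^p)$ is Eisenstein of degree $pe$. So $\fM_i/\varphi(E)\fM_i$ is not an $\sO_K$-module; for $\fM=\mS/p$ it has length $pe$. The de Rham specialization is along the non-surjective map $\tilde\theta\circ\varphi$, i.e.\ you should use $(\varphi^*\fM_i)/E$. Since $\varphi^*\fM_i$ is again a Breuil--Kisin module and $\varphi^*\fM_i\otimes_{\mS,\phi}W(\sC^\flat)=\fM_i\otimes_{\mS,\ol\phi}W(\sC^\flat)$, nothing is lost by this correction.

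\textbf{A mechanism that closes the gap, with no ramification hypothesis.}
\begin{enumerate}
\item For a finite-length $\sO_K$-module $Q$ one has $\len(Q/p^n)=\len(Q[p^n])$. Hence $\len(-/p^n)$ is monotone under both submodules and quotients of torsion $\sO_K$-modules.
\item Let $\fN'$ be finitely generated, $p$-power torsion and $z$-torsion-free. Use its kernel filtration $\fN'[p^j]$, not the $p$-adic one. The graded pieces embed into $\fN'[p]$, so they are finite free over $k[[z]]$, and $E$ acts on them as $z^e$. Therefore $\len_{\sO_K}(\fN'/E)=e\cdot\len_{W(\sC^\flat)}(\fN'\otimes_{\mS}W(\sC^\flat))$.
\item Let $\fN$ be $\fM_{\tors}$ modulo its maximal finite-length submodule. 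Its only associated prime is $(p)$, so $z$ and $E$ act injectively on it.
\item Apply step~2 to $\fN'=\fN[p^n]$. Combine this with the injection $\fN[p^n]/E\hookrightarrow(\fN/E)[p^n]$ and with $(\fN\otimes W(\sC^\flat))[p^n]=\fN[p^n]\otimes W(\sC^\flat)$ (by flatness). This gives $\len((\fN/E)/p^n)\ge e\,\len((\fN\otimes W(\sC^\flat))/p^n)$.
\item Since $\fM/\fM_{\tors}$ is torsion-free of rank $r$, one has $\fM_{\tors}/E\hookrightarrow(\fM/E)_{\tors}$ and $\fM_{\tors}/E\twoheadrightarrow\fN/E$. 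Finite-length modules die in $W(\sC^\flat)$, where $z$ is a unit. Hence $\len((\fM/E)/p^n)\ge rne+e\,\len((\fN\otimes W(\sC^\flat))/p^n)=e\,\len((\fM\otimes W(\sC^\flat))/p^n)$.
\end{enumerate}
Applied to $\varphi^*\fM_i$ and combined with Theorem~\ref{ncbms}(2), this completes your route. Compared with the paper, it replaces the passage to $\sO_\sC$ and the appeal to \cite{CK19} by an elementary argument over $\mS$.
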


\begin{proof}
The natural map $\bS \to \sO_\sC$ induces a morphism
\begin{equation}\label{hpOCcmp}
\TP(\sT_{\sO_\sC};\Z_p)\otimes^{\L}_{\TP(\sO_\sC;\Z_p)} \HP(\sO_\sC/\sO_\sC;\Z_p)
\longrightarrow
\HP(\sT_{\sO_\sC}/\sO_\sC;\Z_p).
\end{equation}
By \cite[Proposition 2.12]{M26}, the left-hand side of \eqref{hpOCcmp} defines a symmetric monoidal functor from the category of smooth proper $\sO_K$-linear dg categories to $\Mod_{\HP(\sO_K/\sO_K;\Z_p)}(\Sp)$. On the other hand, for any idempotent-complete small smooth proper $\sO_K$-linear dg category $\sD$, the object $\HH(\sD/\sO_\sC;\Z_p)$ is dualizable in $\Mod_{H\sO_\sC}(\Sp^{BS^1})$, and by \cite[Theorem 2.15]{AMN18}, it is in fact perfect. It follows that the right-hand side of \eqref{hpOCcmp} also defines a symmetric monoidal functor from the category of smooth proper $\sO_K$-linear dg categories to $\Mod_{\HP(\sO_K/\sO_K;\Z_p)}(\Sp)$; see \cite{AMN18}. Thus, \eqref{hpOCcmp} defines a symmetric monoidal natural transformation between these two functors. Hence \eqref{hpOCcmp} is an equivalence by \cite[Proposition 4.6]{AMN18}.

We recall that $\xi$ generates the kernel of the map $\Ainf \to \sO_\sC$, and that
\[
\HP(\sO_\sC/\sO_\sC;\Z_p)
\simeq
\cofib\bigl(\TP(\sO_\sC;\Z_p)\xrightarrow{\cdot \xi}\TP(\sO_\sC;\Z_p)\bigr)
=
\TP(\sO_\sC;\Z_p)/(\xi).
\]
Using the equivalence \eqref{hpOCcmp} and the basic properties of the derived tensor product in module spectra, we obtain a short exact sequence
\begin{equation}\label{hpseq}
0
\to
\TP_i(\sT_{\sO_\sC};\Z_p)\otimes_{\Ainf}\sO_\sC
\to
\HP_i(\sT_{\sO_\sC}/\sO_\sC;\Z_p)
\to
\TP_{i-1}(\sT_{\sO_\sC};\Z_p)[\xi]
\to
0.
\end{equation}

We now use the discussion in \cite[\S 7.10]{CK19}. Normalize the valuation $\val$ on $\sO_\sC$ by the condition $\val(p)=1$. By the structure theorem, every finitely presented $\sO_\sC$-module $N$ is of the form
\[
N \simeq \bigoplus_{j=1}^m \sO_\sC/(a_j),
\qquad a_j\in \sO_\sC.
\]
If $N$ is torsion, we define
\[
\val(N):=\sum_{j=1}^m \val(a_j).
\]
By \cite[Lemma 7.11]{CK19}, one has
\[
\len_{W(\sC^\flat)}\bigl((\TP_i(\sT_{\sO_\sC};\Z_p)/p^n)\otimes_{\Ainf} W(\sC^\flat)\bigr)
\leq
\val\bigl((\TP_i(\sT_{\sO_\sC};\Z_p)/p^n)\otimes_{\Ainf}\sO_\sC\bigr).
\]
Since $\TP_{i-1}(\sT_{\sO_\sC};\Z_p)[\xi]$ is a torsion $\sO_\sC$-module, we have
\[
\val\bigl(\TP_{i-1}(\sT_{\sO_\sC};\Z_p)[\xi]/p^n\bigr)
=
\val\bigl(\Tor_1^{\sO_\sC}(\TP_{i-1}(\sT_{\sO_\sC};\Z_p)[\xi],\sO_\sC/p^n)\bigr).
\]
Therefore, by \eqref{hpseq},
\[
\val\bigl((\TP_i(\sT_{\sO_\sC};\Z_p)/p^n)\otimes_{\Ainf}\sO_\sC\bigr)
\leq
\val\bigl(\HP_i(\sT_{\sO_\sC}/\sO_\sC;\Z_p)/p^n\bigr).
\]
Using Theorem~\ref{ncbms}(2), namely
\[
(\TP_i(\sT_{\sO_\sC};\Z_p)/p^n)\otimes_{\Ainf} W(\sC^\flat)
\simeq
\LK K_i(\sT_\sC)/p^n \otimes_{\Z_p} W(\sC^\flat),
\]
together with the base change isomorphism
\[
\HP_i(\sT_{\sO_\sC}/\sO_\sC;\Z_p)
\simeq
\HP_i(\sT/\sO_K;\Z_p)\otimes_{\sO_K}\sO_\sC,
\]
we obtain the desired inequality.
\end{proof}
\subsection{Preliminaries on spectral sequences I}
The aim of this subsection is to give spectral analogues of Li's definitions and
criteria for filtered complexes over a DVR \cite[Section~2]{L22}. More precisely, we introduce the
notions of degeneration, saturated degeneration, and split degeneration for a
filtered spectrum, and then prove the corresponding torsion-length criterion. Let $R$ be a DVR with uniformizer $\varpi$, fraction field
$K=R[1/\varpi]$, and residue field $\kappa=R/\varpi$.
For a finitely generated $R$-module $N$, write
\begin{align*}
  N_{\tors} &\subset N, \\
  N_{\tf} &:= N/N_{\tors}
\end{align*}
for its torsion submodule and torsion-free quotient, respectively.

Let $M$ be a spectrum equipped with a decreasing filtration
\begin{equation*}
  \cdots \longrightarrow \fil^{n+1}M
  \longrightarrow \fil^n M
  \longrightarrow \fil^{n-1}M
  \longrightarrow \cdots
  \longrightarrow M .
\end{equation*}
Set
\begin{equation*}
  \gr^n M :=
  \operatorname{cofib}\bigl(\fil^{n+1}M\to \fil^n M\bigr).
\end{equation*}
We assume throughout that the filtration is complete and exhaustive. We also assume that the induced spectral sequence
\begin{equation*}
  E_1^{n,*}=\pi_*(\gr^n M)\Longrightarrow \pi_*(M)
\end{equation*}
is strongly convergent and locally finite in each homotopy degree. More
explicitly, for every $i$, only finitely many $n$ contribute to the
filtration on $\pi_i(M)$. Finally, assume that all groups $\pi_i(M),\pi_i(\fil^nM), \pi_i(\gr^nM)$ are finitely generated $R$-modules, and all maps appearing in the exact
couple are $R$-linear.

For a torsion-free inclusion $A\subset B$ of finitely generated
$R$-modules, we say that $A\subset B$ is saturated if
\begin{equation*}
  \varpi A=A\cap \varpi B,
\end{equation*}
or equivalently if $B/A$ is torsion-free.

\begin{definition}[{Spectral version of \cite[Definition 2.3]{L22}}]
Let $M$ be as above.

\begin{enumerate}
\item We say that the filtered spectrum $M$, or equivalently its associated
spectral sequence, is \emph{degenerate} if for all $i,n\in \mathbb Z$, the
natural map
\begin{equation*}
  \pi_i(\fil^nM)\longrightarrow \pi_i(M)
\end{equation*}
is injective.

\item We say that $M$ is \emph{saturated degenerate} if it is degenerate
and, for all $i,n\in \mathbb Z$, the induced injection
\begin{equation*}
  \pi_i(\fil^nM)_{\tf}\longrightarrow \pi_i(M)_{\tf}
\end{equation*}
is saturated.

\item We say that $M$ is \emph{split degenerate} if it is degenerate and,
for all $i,n\in \mathbb Z$, the injection
\begin{equation*}
  \pi_i(\fil^nM)\longrightarrow \pi_i(M)
\end{equation*}
splits as a map of $R$-modules.
\end{enumerate}
\end{definition}

\begin{proposition}[{Spectral version of \cite[Proposition 2.5]{L22}}]\label{sscrit}
Assume that the spectral sequence associated with $M$ degenerates after
inverting $\varpi$, i.e. after tensoring all $E_r$-terms with $K$.
Then the following hold.

\begin{enumerate}
\item The filtered spectrum $M$ is saturated degenerate if and only if
\begin{equation*}
  \len_R\bigl(\pi_i(M)_{\tors}\bigr)
  =
  \sum_n
  \len_R\bigl(\pi_i(\gr^nM)_{\tors}\bigr)
\end{equation*}
for every $i\in\mathbb Z$.

\item The filtered spectrum $M$ is split degenerate if and only if there is
an abstract isomorphism of $R$-modules
\begin{equation*}
  \pi_i(M)_{\tors}
  \simeq
  \bigoplus_n \pi_i(\gr^nM)_{\tors}
\end{equation*}
for every $i\in\mathbb Z$.
\end{enumerate}
\end{proposition}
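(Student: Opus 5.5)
The plan is to follow Li's argument for filtered complexes over a DVR and split it into two comparisons. The first compares the $E_1$-page with the $E_\infty$-page, which is where the spectral sequence lives. The second compares the $E_\infty$-page with the abutment $\pi_i(M)$, which is where the induced filtration on homotopy lives. Fix a homotopy degree $i$ throughout. By local finiteness only finitely many $n$ contribute, so every induction below is finite.

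\emph{Step 1 ($E_1$ versus $E_\infty$).} Since the spectral sequence degenerates after inverting $\varpi$, every differential $d_r$ has torsion image. Passing from $E_r$ to $E_{r+1}$ replaces each term by a subquotient $\ker d_r/\operatorname{im} d_r$.
\begin{itemize}
\item Taking a kernel does not increase torsion length.
\item Quotienting by a nonzero torsion image strictly decreases it, because the free rank is unchanged rationally.
\end{itemize}
Hence in each degree $i$ we get $\sum_n \len_R(E_\infty^{n,i})_{\tors}\le \sum_n\len_R(E_1^{n,i})_{\tors}$. If equality holds for all $i$, then every differential vanishes: a nonzero $d_r$ landing in degree $i$ would make the inequality strict there. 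Vanishing of all differentials is equivalent to the long exact sequences of $\fil^{n+1}M\to\fil^nM\to\gr^nM$ splitting into short exact sequences on $\pi_*$. Using completeness, exhaustiveness and strong convergence, this in turn is equivalent to degeneracy in the sense of the definition (injectivity of $\pi_i(\fil^nM)\to\pi_i(M)$).

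\emph{Step 2 ($E_\infty$ versus $\pi_i(M)$).} Write $F^n:=\operatorname{im}(\pi_i\fil^nM\to\pi_i M)$, so that $F^n/F^{n+1}\simeq E_\infty^{n,i}$. For a short exact sequence $0\to A\to B\to C\to 0$ of finitely generated $R$-modules one has $\len B_{\tors}\le \len A_{\tors}+\len C_{\tors}$. Equality holds iff $0\to A_{\tors}\to B_{\tors}\to C_{\tors}\to 0$ is exact, which happens iff $B_{\tf}/A_{\tf}$ is torsion-free, i.e.\ iff $A_{\tf}\subset B_{\tf}$ is saturated. Inducting along the finite filtration gives $\len\pi_i(M)_{\tors}\le\sum_n\len (E_\infty^{n,i})_{\tors}$, with equality iff every step is saturated. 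Chaining this with Step 1 proves (1): the displayed equality of lengths holds iff all differentials vanish and all inclusions are saturated, which is exactly saturated degeneracy.

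\emph{Step 3 (part (2)).} If $M$ is split degenerate, then $\pi_i(M)\simeq\bigoplus_n\pi_i(\gr^nM)$, and taking torsion gives the required isomorphism. Conversely, an abstract isomorphism gives equality of lengths, so by (1) $M$ is saturated degenerate and all the torsion sequences $0\to F^{n+1}_{\tors}\to F^n_{\tors}\to \pi_i(\gr^nM)_{\tors}\to 0$ are exact. It remains to show that each inclusion $F^{n+1}\subset F^n$ splits. Because it is saturated, the torsion-free part splits automatically, so the question reduces to the torsion sequences.

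This is the main obstacle. For finite-length modules over a DVR, encode isomorphism types by partitions. For an extension $0\to A\to B\to C\to 0$, the partition of $B$ is dominated by the union of the partitions of $A$ and $C$, with equality iff $B\simeq A\oplus C$. Chaining these dominance relations along the filtration, the global abstract isomorphism $\pi_i(M)_{\tors}\simeq\bigoplus_n\pi_i(\gr^nM)_{\tors}$ forces an abstract isomorphism $F^n_{\tors}\simeq F^{n+1}_{\tors}\oplus\pi_i(\gr^nM)_{\tors}$ at every step. Miyata's theorem then says that a short exact sequence of finitely generated modules whose middle term is abstractly isomorphic to the direct sum of the outer terms is split. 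Applying it at each step gives the splitting of $F^{n+1}\subset F^n$, and composing these splittings along the filtration gives the splitting of $\pi_i(\fil^nM)\to\pi_i(M)$.
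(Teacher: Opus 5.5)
Your argument is correct and follows the paper's proof. The same torsion-length comparison between $E_1$ and $E_\infty$ forces the differentials to vanish, and your Steps~2 and~3 are self-contained proofs of the two results the paper simply cites from Li: Lemma~2.6 for the saturation criterion, and Corollary~2.11 for the torsion splitting, which you obtain via partition dominance plus Miyata's theorem.

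One slip to fix: with the usual convention, the type of $B$ \emph{dominates} the union of the types of $A$ and $C$ (take $R/\varpi^2$ as an extension of $\kappa$ by $\kappa$). Equivalently, $\len_R(\varpi^jB)\ge\len_R(\varpi^jA)+\len_R(\varpi^jC)$ for all $j$. Your chaining argument goes through verbatim with this direction, because union is monotone for dominance.
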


\begin{proof}
Let
\begin{equation*}
  F^n\pi_i(M):=
  \operatorname{im}\bigl(\pi_i(\fil^nM)\to \pi_i(M)\bigr)
\end{equation*}
be the filtration induced on $\pi_i(M)$. Since the spectral sequence is
strongly convergent and locally finite in each degree, its $E_\infty$-page
identifies with the associated graded pieces
\begin{equation*}
  E_\infty^{n,i}\simeq \gr_F^n\pi_i(M).
\end{equation*}

Because the spectral sequence degenerates after inverting $\varpi$, every
differential is $\varpi$-power torsion. In particular,
$(\gr_F^n\pi_i(M))_{\tors}$ is a subquotient of
$\pi_i(\gr^nM)_{\tors}$. Hence
\begin{align*}
  \len_R\bigl(\pi_i(M)_{\tors}\bigr)
  &\le
  \sum_n
  \len_R\bigl((\gr_F^n\pi_i(M))_{\tors}\bigr) \\
  &\le
  \sum_n
  \len_R\bigl(\pi_i(\gr^nM)_{\tors}\bigr).
\end{align*}
If equality with the right-hand side holds, then all torsion classes on the
$E_1$-page survive to $E_\infty$. Since all differentials are torsion after
inverting $\varpi$, this forces every differential to vanish. Thus the
spectral sequence is degenerate.

Assume now that the spectral sequence is degenerate. Then
\begin{align*}
  F^n\pi_i(M)
  &=
  \pi_i(\fil^nM) \\
  &\subset
  \pi_i(M),
\end{align*}
and the associated graded pieces of this filtration are
\begin{equation*}
  \gr_F^n\pi_i(M)\simeq \pi_i(\gr^nM).
\end{equation*}
For a finitely generated $R$-module $N$ with a finite exhaustive filtration
$F^* N$, by \cite[Lemma 2.6]{L22}, 
\begin{equation*}
  F^nN_{\tf}\subset N_{\tf}
  \text{ is saturated for all } n
\end{equation*}
if and only if
\begin{equation*}
  \len_R(N_{\tors})
  =
  \sum_n \len_R\bigl((\gr_F^nN)_{\tors}\bigr).
\end{equation*}
Applying this lemma to $N=\pi_i(M)$ gives the first assertion.

For the second assertion, first suppose that $M$ is split degenerate.
Then each inclusion
\begin{equation*}
  \pi_i(\fil^nM)\subset \pi_i(M)
\end{equation*}
splits, so the induced filtration on $\pi_i(M)_{\tors}$ is split. Since
its graded pieces are $\pi_i(\gr^nM)_{\tors}$, we obtain an abstract
isomorphism
\begin{equation*}
  \pi_i(M)_{\tors}
  \simeq
  \bigoplus_n \pi_i(\gr^nM)_{\tors}.
\end{equation*}

Conversely, suppose that such an abstract isomorphism exists for every $i$.
Taking lengths gives the equality in part $(1)$, hence $M$ is saturated
degenerate. Therefore
\begin{equation*}
  \pi_i(\fil^nM)_{\tf}\subset \pi_i(M)_{\tf}
\end{equation*}
is saturated, and hence split, since finitely generated torsion-free modules
over a DVR are free. It remains to split the torsion part. The induced
filtration on $\pi_i(M)_{\tors}$ has graded pieces
\begin{equation*}
  \pi_i(\gr^nM)_{\tors},
\end{equation*}
and by the abstract isomorphism
\begin{equation*}
  \pi_i(M)_{\tors}
  \simeq
  \bigoplus_n \pi_i(\gr^nM)_{\tors}.
\end{equation*}
By \cite[Corollary 2.11]{L22} each inclusion
\begin{equation*}
  \pi_i(\fil^nM)_{\tors}
  \subset
  \pi_i(M)_{\tors}
\end{equation*}
splits. Combining the splitting on torsion parts with the splitting on
torsion-free quotients gives a splitting
\begin{equation*}
  \pi_i(\fil^nM)\longrightarrow \pi_i(M)
\end{equation*}
for every $i,n$. Thus $M$ is split degenerate.
\end{proof}

\begin{lemma}\label{lenfil}
Let $M$ be a filtered spectrum as above, and assume that $M$ is saturated degenerate.
Then for every $i\in \mathbb Z$ and every integer $n>0$, one has
\begin{equation*}
  \len_R\bigl(\pi_i(M)/\varpi^n\bigr)
  \leq
  \sum_a \len_R\bigl(\pi_i(\gr^a M)/\varpi^n\bigr).
\end{equation*}
\end{lemma}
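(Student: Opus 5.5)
Since $M$ is saturated degenerate, it is in particular degenerate. So, as in the proof of Proposition~\ref{sscrit}, for each $i$ the induced filtration $F^a\pi_i(M)=\pi_i(\fil^aM)$ is a finite exhaustive filtration of the finitely generated $R$-module $N:=\pi_i(M)$, with graded pieces $\gr_F^aN\simeq\pi_i(\gr^aM)$. The claim is then purely algebraic: for a finitely generated $R$-module $N$ with a finite filtration,
\[
\len_R(N/\varpi^n)\le\sum_a\len_R(\gr_F^aN/\varpi^n).
\]
We will in fact obtain this without using saturatedness.

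The key observation is that $-\otimes_R R/\varpi^n$ is right exact. For each short exact sequence $0\to F^{a+1}N\to F^aN\to\gr_F^aN\to0$ we get an exact sequence
\[
F^{a+1}N/\varpi^n\to F^aN/\varpi^n\to \gr_F^aN/\varpi^n\to0 .
\]
All terms have finite length, being finitely generated and killed by $\varpi^n$. Hence
\[
\len_R(F^aN/\varpi^n)\le\len_R(F^{a+1}N/\varpi^n)+\len_R(\gr_F^aN/\varpi^n).
\]

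Now induct downward in $a$. By local finiteness there are integers $a_0\le a_1$ such that $F^{a}N=N$ for $a\le a_0$ and $F^{a}N=0$ for $a>a_1$. Telescoping the inequality from $a_1$ down to $a_0$ gives
\[
\len_R(N/\varpi^n)\le\sum_{a_0\le a\le a_1}\len_R(\gr_F^aN/\varpi^n).
\]
The graded pieces $\pi_i(\gr^aM)$ outside this range contribute nonnegative terms to the full sum, so the desired inequality follows.

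The only point needing care is the identification $\gr_F^a\pi_i(M)\simeq\pi_i(\gr^aM)$. This is where degeneracy is used: injectivity of $\pi_i(\fil^{a+1}M)\to\pi_i(M)$ makes the long exact sequence of the cofiber sequence $\fil^{a+1}M\to\fil^aM\to\gr^aM$ split into short exact sequences. That is exactly the statement recorded in the proof of Proposition~\ref{sscrit}, so there is no real obstacle.

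Alternatively, saturatedness would give the stronger equality. Indeed, $\len_R(N/\varpi^n)=n\cdot\operatorname{rk}N+\len_R(N_{\tors}/\varpi^n)$, the ranks are additive, and one could then compare torsion via Proposition~\ref{sscrit}(1). However, the inequality does not require this.
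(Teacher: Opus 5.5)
Your argument is correct and is essentially the paper's: both use only degeneracy to identify $\gr_F^a\pi_i(M)\simeq\pi_i(\gr^aM)$, and then bound $\len_R(N/\varpi^n)$ by right exactness of $-\otimes_R R/\varpi^n$ along the finite filtration. The paper does this via the image filtration on $N/\varpi^nN$ and the surjections $\gr_F^aN/\varpi^n\gr_F^aN\twoheadrightarrow\gr_F^a(N/\varpi^nN)$, you via telescoping, and neither needs saturatedness. Your closing aside is false, however: saturatedness does not force equality for every $n$. For example, $N=R/\varpi^2$ filtered by $\varpi N$ is saturated degenerate, yet at $n=1$ one gets $1<2$. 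The reason is that Proposition~\ref{sscrit}(1) only controls $\len_R(N_{\tors})$, which agrees with $\len_R(N_{\tors}/\varpi^n)$ only once $n$ exceeds the torsion exponents.
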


\begin{proof}
Set
\begin{equation*}
  N:=\pi_i(M),
\end{equation*}
and let
\begin{equation*}
  F^aN:=
  \operatorname{im}\bigl(\pi_i(\fil^a M)\to \pi_i(M)\bigr)
\end{equation*}
be the induced filtration on $N$.
Since the filtration is locally finite in each degree, this is a finite
exhaustive filtration on the finitely generated $R$-module $N$.

Fix $n>0$. The filtration $F^* N$ induces a filtration on
$N/\varpi^nN$ by
\begin{equation*}
  F^a(N/\varpi^nN):=
  \operatorname{im}\bigl(F^aN\to N/\varpi^nN\bigr).
\end{equation*}
Hence
\begin{equation*}
  \len_R\bigl(N/\varpi^nN\bigr)
  =
  \sum_a
  \len_R\bigl(\gr_F^a(N/\varpi^nN)\bigr).
\end{equation*}

For each $a$, there is a natural surjection
\begin{equation*}
  \gr_F^a N/\varpi^n \gr_F^a N
  \twoheadrightarrow
  \gr_F^a(N/\varpi^nN).
\end{equation*}
Therefore
\begin{equation*}
  \len_R\bigl(\gr_F^a(N/\varpi^nN)\bigr)
  \leq
  \len_R\bigl(\gr_F^a N/\varpi^n \gr_F^a N\bigr).
\end{equation*}
Summing over $a$, we obtain
\begin{equation*}
  \len_R\bigl(N/\varpi^nN\bigr)
  \leq
  \sum_a
  \len_R\bigl(\gr_F^a N/\varpi^n \gr_F^a N\bigr).
\end{equation*}

Since $M$ is degenerate, we have
\begin{equation*}
  F^aN=\pi_i(\fil^aM)
\end{equation*}
and hence
\begin{equation*}
  \gr_F^aN \simeq \pi_i(\gr^aM).
\end{equation*}
Thus
\begin{equation*}
  \len_R\bigl(\pi_i(M)/\varpi^n\bigr)
  \leq
  \sum_a
  \len_R\bigl(\pi_i(\gr^aM)/\varpi^n\bigr),
\end{equation*}
as claimed.
\end{proof}
\subsection{\texorpdfstring{Split degeneration of the crystalline-to-$\TP$ spectral sequence}{Split degeneration of the crystalline-to-TP spectral sequence}}

Let $X$ be a smooth proper variety over $\sO_K$. Applying Proposition~\ref{tplength} to $\sT=\perf(X)$, we obtain the inequality
\[
\len_{W}\bigl(\TP_i(X_k;\Z_p)/p^n\bigr)
\geq
\len_{\Z_p}\bigl(\LK K_i(X_\sC)/p^n\bigr).
\]
By Theorem~\ref{ncbms}, the groups $\TP_i(X_k;\Z_p)$ and $\LK K_i(X_\sC)$ have the same rank, say $r$. Hence, for $n \gg 0$, we may write
\[
\TP_i(X_k;\Z_p)/p^n
\simeq
\TP_i(X_k;\Z_p)_{\tors} \oplus W^{\oplus r}/p^n
\]
and
\[
\LK K_i(X_\sC)/p^n
\simeq
\LK K_i(X_\sC)_{\tors} \oplus \Z_p^{\oplus r}/p^n.
\]
It follows that
\begin{equation}\label{ineqtp}
\len_{W}\bigl(\TP_i(X_k;\Z_p)_{\tors}\bigr)
\geq
\len_{\Z_p}\bigl(\LK K_i(X_\sC)_{\tors}\bigr).
\end{equation}

On the other hand, Thomason studied the \'etale sheafification of the Postnikov tower of $\LK K(-)$ in \cite{T85}, and obtained a strongly convergent spectral sequence
\[
E_2^{i,j}=H_{\et}^{i-j}(X_\sC,\Z_p(j)) \Longrightarrow \pi_{-i-j}\bigl(\LK K(X_\sC)\bigr),
\]
which degenerates after tensoring with $\Q_p$. Therefore,
\begin{equation}\label{ineqahss}
\sum_{n\in \Z}\len_{\Z_p}\bigl(H_{\et}^{i+2n}(X_\sC,\Z_p)_{\tors}\bigr)
\geq
\len_{\Z_p}\bigl(\LK K_i(X_\sC)_{\tors}\bigr).
\end{equation}

Moreover, in \cite{BMS19}, Bhatt--Morrow--Scholze studied the quasisyntomic sheafification of the Postnikov tower of $\TP(-;\Z_p)$ and obtained a strongly convergent spectral sequence
\[
E_2^{i,j}=H_{\cry}^{i-j}(X_k/W)(j)\Longrightarrow \pi_{-i-j}\bigl(\TP(X_k;\Z_p)\bigr),
\]
which degenerates after inverting $p$; see also \cite{E18}. Hence
\begin{equation}\label{ineqcry}
\sum_{n\in \Z}\len_{W}\bigl(H_{\cry}^{i+2n}(X_k/W)_{\tors}\bigr)
\geq
\len_{W}\bigl(\TP_i(X_k;\Z_p)_{\tors}\bigr).
\end{equation}

Combining \eqref{ineqtp}, \eqref{ineqahss}, and \eqref{ineqcry} with \cite[Theorem 14.5]{BMS18}, we obtain the following diagram of inequalities:
\begin{equation}\label{ineqdia}
\xymatrix{
\sum_{n\in \Z}\len_{W}\bigl(H_{\cry}^{i+2n}(X_k/W)_{\tors}\bigr)
  & \overset{(1)}{\geq} &
\sum_{n\in \Z}\len_{\Z_p}\bigl(H_{\et}^{i+2n}(X_\sC,\Z_p)_{\tors}\bigr) \\
\len_{W}\bigl(\TP_i(X_k;\Z_p)_{\tors}\bigr) \ar@{}[u]^{(2){\rotatebox{-90}{$\geq$}}}
  & \overset{(4)}{\geq} &
\len_{\Z_p}\bigl(\LK K_i(X_\sC)_{\tors}\bigr) \ar@{}[u]^{{\rotatebox{-90}{$\geq$}}(3)}
}
\end{equation}

The following proposition shows that, under a suitable dimension bound, the inequality $(3)$ in \eqref{ineqdia} is in fact an equality.

\begin{prop}\label{kdeg}
Let $Y$ be a smooth proper variety over $\sC$. Assume that $\dim Y<p-1$. Then the Atiyah--Hirzebruch spectral sequence is split degenerate. In particular, there is an isomorphism of $\Z_p$-modules
\begin{equation}\label{ksplit}
\LK K_i(Y) \simeq \bigoplus_{n\in\Z} H_{\et}^{i+2n}(Y,\Z_p).
\end{equation}
\end{prop}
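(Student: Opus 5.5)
We would reduce the statement to the topological splitting of Theorem~\ref{ktopchdec}. Since $Y$ is proper over $\sC$ and $\sC\simeq\bC$ abstractly as fields, we choose an isomorphism $\sC\simeq\bC$ and regard $Y$ as a complex variety $Y(\bC)$. The underlying space is a compact manifold of real dimension $2\dim Y$, so it admits a finite CW structure of dimension $d=2\dim Y$. Artin's comparison theorem together with Thomason's comparison \cite{T85} identify, compatibly with the filtrations, the following objects:
\begin{itemize}
  \item the \'etale cohomology $H^*_{\et}(Y,\Z_p)$ with $H^*(Y(\bC);\Z_p)$;
  \item the $p$-adic $K(1)$-local $K$-theory $\LK K(Y)$ with the $p$-completion of complex topological $K$-theory $\Ktop(Y(\bC))$.
\end{itemize}
The second identification takes the form $\pi_i\LK K(Y)\simeq K^{-i}_{\topp}(Y(\bC))\otimes\Z_p$. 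Under these identifications, Thomason's descent spectral sequence becomes the (periodized) Atiyah--Hirzebruch spectral sequence of $Y(\bC)$ with $\Z_p$-coefficients. This step is the main point where one must cite the comparison carefully, and we would take it as known from \cite{T85}.

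Next we apply Theorem~\ref{ktopchdec} with $d=2\dim Y$, so that
\[
M=\Bigl\lfloor\frac{2\dim Y+1}{2}\Bigr\rfloor=\dim Y.
\]
The hypothesis $\dim Y<p-1$ gives $M<p$, hence $p\nmid M!$, and therefore $\Z_p$ is an $R=\Z[1/M!]$-algebra. Tensoring the isomorphisms of Theorem~\ref{ktopchdec} with $\Z_p$ yields
\[
\widetilde K^{0}_{\topp}(Y(\bC))\otimes\Z_p\simeq \widetilde H^{\mathrm{even}}(Y(\bC);\Z_p),
\qquad
\widetilde K^{-1}_{\topp}(Y(\bC))\otimes\Z_p\simeq \widetilde H^{\mathrm{odd}}(Y(\bC);\Z_p).
\]
We then add back the point summand, using $K^0(\mathrm{pt})=\Z=H^0$ on each component. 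Bott periodicity then gives \eqref{ksplit} for every $i$. (Strictly, the hypothesis $\dim Y<p-1$ is stronger than the $\dim Y<p$ that this argument needs.)

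Finally we upgrade the abstract isomorphism to split degeneration using Proposition~\ref{sscrit}(2), with $R=\Z_p$ and the filtration given by the Thomason/AHSS (Postnikov) filtration. Its hypotheses are satisfied:
\begin{itemize}
  \item the spectral sequence is strongly convergent and finite in each degree, since cohomology vanishes above degree $2\dim Y$;
  \item all groups are finitely generated over $\Z_p$;
  \item it degenerates after inverting $p$, as recorded above.
\end{itemize}
The isomorphism \eqref{ksplit} restricts to an isomorphism of torsion submodules, $\LK K_i(Y)_{\tors}\simeq\bigoplus_n H^{i+2n}_{\et}(Y,\Z_p)_{\tors}$, because any isomorphism of finitely generated modules preserves torsion. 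These torsion groups are exactly the torsion parts of the $E_2$-terms in total degree $i$, with the Tate twists $\Z_p(j)$ trivialized over $\sC$. Proposition~\ref{sscrit}(2) then gives split degeneration.

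The main obstacle we expect is the compatibility step: ensuring that the Chern character isomorphism is really compared with Thomason's spectral sequence. One route would be to note that Proposition~\ref{sscrit}(2) only requires an \emph{abstract} isomorphism of torsion parts, which removes any need to show that the Chern character respects the filtration.
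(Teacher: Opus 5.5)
\textbf{Assessment.} After the first line your proposal is the paper's argument. You compare $\LK K$ and $p$-adic \'etale cohomology with $\Ktop$ and singular cohomology of the analytification, apply Theorem~\ref{ktopchdec}, and conclude with Proposition~\ref{sscrit}(2), which needs only an abstract isomorphism of torsion parts, so no filtration-compatibility of the Chern character is needed. Your bookkeeping ($Y(\bC)$ has a finite CW structure of dimension $2\dim Y$, hence $M=\dim Y$) is more explicit than the paper's, and you are right that $\dim Y<p$ already suffices.

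\textbf{The gap.} It is in the very first step: an abstract field isomorphism $\sC\simeq\bC$ exists only when $\sC$ has cardinality $2^{\aleph_0}$. In this section $K$ is an arbitrary complete discretely valued extension of $\Q_p$ with perfect residue field $k$, of any cardinality. Since $\sC\supset W(k)$, if $|k|>2^{\aleph_0}$ then $|\sC|>2^{\aleph_0}$, and no such isomorphism exists.

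\textbf{How the paper handles it.} This is exactly what the paper's proof deals with before it uses comparison theorems. It fixes $\bC_p\hookrightarrow\sC$ and chooses a finitely generated extension $L\subset\sC$ of $\bC_p$ over which $Y$ is defined. It then spreads $Y$ out to a smooth proper family $Y'\to T$ over a $\bC_p$-variety $T$ with function field $L$, and specializes to a point $t\in T(\bC_p)$. Smooth proper base change together with Thomason's spectral sequence then give $H^*_{\et}(Y,\Z_p)\simeq H^*_{\et}(Y'_t,\Z_p)$ and $\LK K_*(Y)\simeq \LK K_*(Y'_t)$. Only after this does the paper fix $\bC_p\simeq\bC$, which is legitimate.

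\textbf{An alternative repair.} You could instead descend $Y$ to a countable algebraically closed subfield $L_0\subset\sC$ and embed $L_0\hookrightarrow\bC$. Then use that $p$-adic \'etale cohomology of proper varieties is invariant under extension of algebraically closed fields. A map of Thomason spectral sequences that is an isomorphism on $E_2$ then gives the same invariance for $\LK K_*$. Since Proposition~\ref{sscrit}(2) only asks for an abstract isomorphism of torsion groups for $Y$ itself, transporting both sides along these isomorphisms is enough. The rest of your proof then goes through unchanged.
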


\begin{proof}
Fix an embedding $\bC_p \hookrightarrow \sC$. There exists a subfield $\bC_p \subset L \subset \sC$ such that $L$ is finitely generated over $\bC_p$ and $Y$ is defined over $L$. Choose a variety $T$ over $\bC_p$ with function field $K(T)\simeq L$. Then, after replacing $T$ by a nonempty open subscheme, there exists a smooth projective morphism $Y' \to T$ over $\bC_p$ whose generic fibre becomes isomorphic to $Y$ after base change to $\sC$.

Choose a $\bC_p$-valued point $t\in T(\bC_p)$. By proper smooth base change in \'etale cohomology and Thomason's spectral sequence \cite{T85}, we obtain canonical isomorphisms
\[
\LK K_j(Y'_t)\simeq \LK K_j(Y)
\]
and
\[
H_{\et}^{*}(Y'_t,\Z_p)\simeq H_{\et}^{*}(Y,\Z_p).
\]
Fix an isomorphism $\bC_p \simeq \bC$. Then $(Y'_{t,\bC})^{\an}$ is homotopy equivalent to a finite CW complex. Using the comparison isomorphisms
\[
\LK K_*(Y'_t)\simeq K^{\mathrm{top}}_*((Y'_{t,\bC})^{\an})\otimes_{\Z}\Z_p
\]
and
\[
H_{\et}^{*}(Y'_t,\Z_p)\simeq H_{\Sing}^{*}(Y'_{t,\bC},\Z_p),
\]
together with Theorem~\ref{ktopchdec}, we obtain \eqref{ksplit}. By Proposition~\ref{sscrit}, the claim follows.
\end{proof}

\begin{thm}\label{crydege}
Let $X$ be a smooth proper variety over $\sO_K$. Assume that $\dim X<p-1$ if $\sO_K=W$, and assume that $e(2\dim X-1)<p-1$ if $e\geq 2$. Then the crystalline-to-$\TP$ spectral sequence is split degenerate. In particular, there is an isomorphism of $W$-modules
\[
\pi_j\TP(X_k;\Z_p) \simeq \bigoplus_{i\in\Z} H_{\cry}^{-j+2i}(X_k/W).
\]
\end{thm}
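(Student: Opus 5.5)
The plan is to close up the diagram of inequalities \eqref{ineqdia}. Once all four inequalities there are equalities, Proposition~\ref{sscrit}(1) makes the crystalline-to-$\TP$ spectral sequence saturated degenerate; an abstract isomorphism of torsion parts then gives split degeneration via Proposition~\ref{sscrit}(2).

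Concretely, I would first show that inequality $(1)$ is an equality, i.e.
\[
\len_W\bigl(H^m_{\cry}(X_k/W)_{\tors}\bigr)=\len_{\Z_p}\bigl(H^m_{\et}(X_\sC,\Z_p)_{\tors}\bigr)
\]
for every $m$. This should hold as an abstract isomorphism of torsion modules after base change to $W$:
\[
H^m_{\cry}(X_k/W)_{\tors}\simeq H^m_{\et}(X_\sC,\Z_p)_{\tors}\otimes_{\Z_p}W .
\]
In the unramified case $\dim X<p-1$, this is the Fontaine--Messing / Caruso integral comparison, which is the input behind \cite[Theorem 14.5]{BMS18}. In the ramified case I would invoke Caruso's comparison (or the Breuil--Kisin description of $\Ainf$-cohomology) in the range $e(m-1)<p-1$ for all $m\le 2\dim X$. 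This is exactly why the hypothesis reads $e(2\dim X-1)<p-1$. The top degree $m=2\dim X$ has torsion-free cohomology, or it can be handled by Poincaré duality, so only degrees $\le 2\dim X-1$ actually matter for torsion. I expect this to be the main obstacle, because one has to pin down precisely which cited comparison theorem gives isomorphisms of torsion (not just length inequalities) in the stated range.

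Next, since $\dim X_\sC<p-1$ in both cases (when $e\ge2$ we have $\dim X\le e(2\dim X-1)$), Proposition~\ref{kdeg} gives
\[
\LK K_i(X_\sC)\simeq\bigoplus_n H^{i+2n}_{\et}(X_\sC,\Z_p),
\]
so $(3)$ is an equality. Going around the diagram, $(2)$ gives
\[
\sum\len H_{\cry,\tors}\ge\len\TP_{i,\tors},
\]
and $(4)$ and $(3)$ give
\[
\len\TP_{i,\tors}\ge\len\LK K_{i,\tors}=\sum\len H_{\et,\tors}.
\]
Since $(1)$ is an equality, all four inequalities are forced to be equalities. In particular $(2)$ is an equality. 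The spectral sequence of \cite{BMS19} degenerates rationally, so Proposition~\ref{sscrit}(1) gives saturated degeneration.

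For the split statement, I would produce an abstract isomorphism
\[
\TP_i(X_k;\Z_p)_{\tors}\simeq\bigoplus_n H^{i+2n}_{\cry}(X_k/W)_{\tors}.
\]
Taking the isomorphism $\LK K_i(X_\sC)_{\tors}\simeq\bigoplus H^{i+2n}_{\et,\tors}$ and tensoring with $W$, the right-hand side matches it by the torsion comparison of the first step. To compare $\TP_{i,\tors}$ with $\LK K_{i,\tors}\otimes W$, I would sharpen the proof of Proposition~\ref{tplength} to torsion modules. That proof realizes $\pi_i(\sM)\otimes\ol W$ as a submodule of $\TP_i(X_{\ol k})$ with $p$-power-torsion cokernel. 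By Theorem~\ref{ncbms}(2) and \cite[Corollary 4.15]{BMS18}, the torsion of $\pi_i(\sM)\otimes\ol W$ dominates $\LK K_{i,\tors}\otimes\ol W$ in elementary divisors, not only in length. Since the lengths now agree, the cokernel must be zero and the elementary divisors coincide, so descending along the faithfully flat map $W\to\ol W$ gives the isomorphism. If this refinement fails, the fallback is to deduce split degeneration from saturated degeneration together with the equality of elementary divisors using \cite[Corollary 2.11]{L22} directly.
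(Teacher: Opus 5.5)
Your route to saturated degeneration is the paper's: close the torsion-length diagram \eqref{ineqdia} using the integral comparison and Proposition~\ref{kdeg}, then apply Proposition~\ref{sscrit}(1). The gap is in the upgrade to split degeneration.

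\textbf{Where the argument breaks.} Proposition~\ref{tplength} is already stated for every $n>0$, so no sharpening is needed. It gives
\[
\len_W\bigl(\TP_i(X_k;\Z_p)/p^n\bigr)\ge\len_{\Z_p}\bigl(\LK K_i(X_\sC)/p^n\bigr)\quad\text{for all } n,
\]
which is a one-sided dominance of elementary divisors. One-sided dominance together with equal total torsion length does not force an isomorphism. For example, $T=(W/p)^{\oplus 2}$ and $T'=W/p^2$ satisfy $\len(T/p^n)\ge\len(T'/p^n)$ for all $n$ and $\len T=\len T'$, yet $T\not\simeq T'$.

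Similarly, equal torsion lengths do not make the cokernel of $\pi_i(\sM)\otimes\ol{W}\hookrightarrow\TP_i(X_{\ol{k}};\Z_p)$ vanish; think of $pW\subset W$. So your claim that \emph{the cokernel must be zero and the elementary divisors coincide} does not follow. Your fallback via \cite[Corollary~2.11]{L22} presupposes exactly the equality of elementary divisors that is missing.

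\textbf{The missing ingredient.} You need the reverse inequality at every level $n$:
\[
\len_W\bigl(\TP_i(X_k;\Z_p)/p^n\bigr)\le\sum_m\len_W\bigl(H^{i+2m}_{\cry}(X_k/W)/p^n\bigr).
\]
The paper gets this from the degeneracy you have just proved, via Lemma~\ref{lenfil}. By right exactness of $-\otimes W/p^n$, the function $\len(-/p^n)$ of a finitely filtered module is bounded by that of its associated graded.

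The mod-$p^n$ versions of (1) and (3) come for free from the abstract isomorphisms $H^*_{\cry}(X_k/W)\simeq H^*_{\et}(X_\sC,\Z_p)\otimes_{\Z_p}W$ and \eqref{ksplit}. Hence for every $n$,
\[
\sum_m\len\bigl(H^{i+2m}_{\cry}/p^n\bigr)\ge\len\bigl(\TP_i/p^n\bigr)\ge\len\bigl(\LK K_i/p^n\bigr)=\sum_m\len\bigl(H^{i+2m}_{\et}/p^n\bigr)=\sum_m\len\bigl(H^{i+2m}_{\cry}/p^n\bigr),
\]
so every inequality is an equality. The function $n\mapsto\len_W(M/p^n)$ determines a finitely generated $W$-module up to isomorphism, so $\TP_i(X_k;\Z_p)\simeq\bigoplus_m H^{i+2m}_{\cry}(X_k/W)$. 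Proposition~\ref{sscrit}(2) then gives split degeneration. With this ingredient, your detour through $\LK K_{i,\tors}\otimes W$ and the descent along $W\to\ol{W}$ become unnecessary.

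\textbf{A minor point.} The ramified comparison in \cite{M21} is valid in degree $m$ when $em<p-1$, not $e(m-1)<p-1$. So the hypothesis $e(2\dim X-1)<p-1$ covers $m\le 2\dim X-1$, and the torsion-freeness of the top degree handles the rest, as you say.
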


\begin{proof}
The integral comparison theorem between crystalline and \'etale cohomology applies; see \cite{FM87} or \cite[Theorem 4.12]{M21}. Thus
\[
H^*_{\cry}(X_k/W) \simeq H^*_{\et}(X_\sC,\Z_p)\otimes_{\Z_p}W.
\]
It follows that the inequality $(1)$ in \eqref{ineqdia} is in fact an equality.

Hence the inequality $(2)$ in \eqref{ineqdia} is also an equality. By Proposition~\ref{sscrit}, it follows that the strongly convergent spectral sequence
\begin{equation}\label{satcry}
E_2^{a,2b}=H_{\cry}^{a}(X_k/W)(b)\Longrightarrow \pi_{2b-a}\bigl(\TP(X_k;\Z_p)\bigr)
\end{equation}
is saturated degenerate. Therefore, for every integer $n>0$, we obtain
\[
\xymatrix{
\sum_{m\in \Z}\len_{W}\bigl(H_{\cry}^{i+2m}(X_k/W)/p^n\bigr)
  & \overset{(1)'}{=} &
\sum_{m\in \Z}\len_{\Z_p}\bigl(H_{\et}^{i+2m}(X_\sC,\Z_p)/p^n\bigr) \\
\len_{W}\bigl(\TP_i(X_k;\Z_p)/p^n\bigr) \ar@{}[u]^{(2)'{\rotatebox{-90}{$\geq$}}}
  & \overset{(4)'}{\geq} &
\len_{\Z_p}\bigl(\LK K_i(X_\sC)/p^n\bigr) \ar@{}[u]^{{\rotatebox{-90}{$=$}}(3)'}
}
\]
Here, $(2)'$ follows from Lemma~\ref{lenfil} together with the saturated degeneracy of \eqref{satcry}, $(4)'$ follows from Proposition~\ref{tplength}, $(1)'$ follows from \cite[Theorems 4.12 and 5.13]{M21}, and $(3)'$ follows from Proposition~\ref{kdeg}. Thus $(2)'$ is also an equality. By the structure theorem for finitely generated $W$-modules and Proposition~\ref{sscrit}, the claim follows.
\end{proof}

\begin{remark}
If $\sO_K=W$, then by the comparison theorem between crystalline and de Rham cohomology \cite[Theorem V.2.3.2]{B74}, together with the comparison theorem between $\TP$ and $\HP$ proved independently by Scholze (unpublished), Petrov--Vologodsky \cite{PV19arxiv}, Devalapurkar--Raksit \cite{DR25}, and Mao \cite{M25}, Theorem~\ref{crydege} implies that there is an isomorphism of $W$-modules
\[
\pi_j\HP(X/W) \simeq \bigoplus_{i\in\Z} H_{\dR}^{-j+2i}(X/W).
\]
\end{remark}


\section{Preliminaries on Breuil--Kisin modules}

In this section we recall the Breuil--Kisin input used later.  We first fix
our conventions and the low-ramification structure theorem for torsion
Breuil--Kisin modules.  We then record a slightly enlarged version of the
structure theorem, allowing finite free Breuil--Kisin modules in addition to
$p$-power torsion ones.

\subsection{Definitions and structure theorems}

Let $\mS=W(k)[[z]]$, equipped with the Frobenius $\varphi$ extending the
Witt-vector Frobenius and satisfying $\varphi(z)=z^p$.  Let $E=E(z)$ be the
Eisenstein polynomial of the chosen uniformizer $\pi$ of $\sO_K$, so that
$\mS/(E)\simeq \sO_K$.

\begin{definition}
\begin{itemize}
  \item[(1)] A \emph{Breuil--Kisin module} is a finitely generated
  $\mS$-module $\fM$ together with an isomorphism
  \[
    \Phi_{\fM}:\varphi^*\fM[1/E]
    \xrightarrow{\sim}
    \fM[1/E],
    \qquad
    \varphi^*\fM:=\fM\otimes_{\mS,\varphi}\mS .
  \]

  \item[(2)] A \emph{Breuil--Kisin--Fargues module} is a finitely generated
  $\Ainf$-module $\widehat{\fM}$ together with an isomorphism
  \[
    \Phi_{\widehat{\fM}}:
    \varphi_{\Ainf}^*\widehat{\fM}[1/\tilde{\xi}]
    \xrightarrow{\sim}
    \widehat{\fM}[1/\tilde{\xi}] .
  \]

  \item[(3)] Let $(\fM,\Phi_{\fM})$ be a Breuil--Kisin module, and view
  $\Phi_{\fM}$ as an $\mS$-linear map
  \[
    \Phi_{\fM}:\varphi^*\fM\longrightarrow \fM[1/E].
  \]
  For integers $r\geq s$, we say that the $E$-height of $\fM$ is contained
  in $[s,r]$ if
  \[
    E^r\fM\subset \Im(\Phi_{\fM})\subset E^s\fM
  \]
  inside $\fM[1/E]$.
\end{itemize}
\end{definition}

For a Breuil--Kisin module $\fM$, the base change
$\fM\otimes_{\mS,\ol{\phi}}\Ainf$ carries a natural
Breuil--Kisin--Fargues module structure; see
\cite[Proposition~4.32]{BMS18}.

We also recall the standard structure theorem for Breuil--Kisin modules.

\begin{proposition}[{\cite[Proposition~4.3]{BMS18}}]\label{bkdecomp}
Let $(\fM,\Phi_{\fM})$ be a Breuil--Kisin module.  Then there is a canonical
exact sequence of Breuil--Kisin modules
\[
  0
  \to \fM_{\tors}
  \to \fM
  \to \fM_{\free}
  \to \ol{\fM}
  \to 0,
\]
where $\fM_{\tors}$ is the $p$-power torsion submodule, $\fM_{\free}$ is
finite free over $\mS$, and $\ol{\fM}$ is killed by a power of the maximal
ideal $(p,z)$.
\end{proposition}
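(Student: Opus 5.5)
The plan is to produce the sequence in two steps. First I build a map $\fM\to\fM_{\free}$ with torsion kernel and finite-length cokernel. Then I check that every module in sight inherits a Breuil--Kisin structure and that the maps respect the Frobenii.

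\emph{Step 1: the underlying modules.} Set $\fM_{\tors}:=\fM[p^\infty]$. Since $\mS$ is noetherian, this is a finitely generated submodule. The map $\varphi:\mS\to\mS$ is flat, being finite free of rank $p$ on $W[[z]]$ after the Frobenius on $W$. Localising at $E$ is also flat. Hence $\varphi^*$ and $(-)[1/E]$ commute with taking $p$-power torsion. In addition, $\Phi_\fM$ carries $\varphi^*(\fM_{\tors})[1/E]=(\varphi^*\fM)[1/E][p^\infty]$ isomorphically onto $\fM[1/E][p^\infty]=\fM_{\tors}[1/E]$. So $\fM_{\tors}$ and $\fM':=\fM/\fM_{\tors}$ are Breuil--Kisin modules, and the first map is a map of Breuil--Kisin modules.

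Next set $\fM_{\free}:=(\fM')^{\vee\vee}$, the reflexive hull. Since $\mS$ is a two-dimensional regular local ring, reflexive finitely generated modules are free. The natural map $\fM'\to\fM_{\free}$ is injective because $\fM'$ is $p$-torsion-free. Here I use the (standard) fact that $p$-torsion-freeness together with the Frobenius structure forces $\fM'$ to be torsion-free over $\mS$. Indeed, any $\mS$-torsion is supported on height-one primes, and these are stable under the Frobenius isomorphism after inverting $E$; one argues as in \cite[4.3]{BMS18}. The cokernel $\ol\fM$ of this injection is then supported in codimension $2$, so it is killed by a power of $(p,z)$.

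\emph{Step 2: Frobenius on the reflexive hull.} Dualising and double dualising commute with the flat base changes $\varphi^*$ and $[1/E]$. Therefore $\Phi_{\fM'}$ induces an isomorphism $\varphi^*\fM_{\free}[1/E]\simeq\fM_{\free}[1/E]$ compatible with $\fM'\hookrightarrow\fM_{\free}$. The quotient $\ol\fM$ inherits the induced isomorphism. Finally, canonicity holds because every construction is functorial in $\fM$.

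The main obstacle is the torsion-freeness claim in Step 1. One has to exclude $\mS$-torsion in $\fM'$ that is not $p$-power torsion, such as $E$-torsion or $z$-torsion. I expect to handle it as in \cite{BMS18}: a prime $\mathfrak q\neq(p)$ of height one in the support of the $\mS$-torsion would force $\varphi^{-1}$-stability under the Frobenius isomorphism away from $E$. Comparing lengths at $\mathfrak q$ and at $\varphi(\mathfrak q)$, and iterating, then produces a contradiction with finite generation.
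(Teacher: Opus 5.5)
The gap is the one step you flag, that $\fM'=\fM/\fM[p^\infty]$ is $\mS$-torsion-free, and the mechanism you sketch for it does not work as written. Everything else is correct and follows the standard route: the torsion submodule, the reflexive hull over the two-dimensional regular local ring $\mS$, and compatibility of $[p^\infty]$ and $(-)^{\vee\vee}$ with the flat base changes $\varphi^*$ and $[1/E]$. The torsion-freeness step is the real content of the proposition. By itself, $p$-torsion-freeness does not make $\fM'\to(\fM')^{\vee\vee}$ injective; $\mS/(z)$ is a counterexample. You defer the step to \cite[Proposition~4.3]{BMS18}, which is the statement being proved.

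Your sketch fails at two primes.
\begin{itemize}
\item The prime $(z)$ is fixed under contraction, $\varphi^{-1}((z))=(z)$, so iterating gives no new primes and no contradiction with finite generation. What excludes it is ramification. Since $E\notin(z)$, localizing the Frobenius isomorphism at $(z)$ gives $T_{(z)}\simeq T_{(z)}\otimes_{\mS_{(z)},\varphi}\mS_{(z)}$. Since $\varphi(z)=z^p$, the right side has length $p\cdot\len(T_{(z)})$, which forces $T_{(z)}=0$.
\item The Frobenius isomorphism says nothing at $(E)$, so any orbit argument must be routed around $(E)$.
\end{itemize}
Also, the relevant operations on primes are contraction $\mathfrak q\mapsto\varphi^{-1}(\mathfrak q)$ and lying over, not $\varphi(\mathfrak q)$.

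To close the gap, let $T\subset\fM'$ be the $\mS$-torsion submodule. It inherits $\varphi^*T[1/E]\simeq T[1/E]$, because taking torsion commutes with base change along the finite injective flat map $\varphi$ and with localization. Let $Z$ be the finite set of height-one primes in $\mathrm{Supp}(T)$. Then $(p)\notin Z$ because $\fM'$ is $p$-torsion-free.

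Take a height-one prime $\mathfrak q\neq(E)$ and put $\mathfrak p=\varphi^{-1}(\mathfrak q)$. Localizing the Frobenius isomorphism gives $T_{\mathfrak q}\simeq T_{\mathfrak p}\otimes_{\mS_{\mathfrak p},\varphi}\mS_{\mathfrak q}$, and $\mS_{\mathfrak p}\to\mS_{\mathfrak q}$ is faithfully flat, so $\mathfrak q\in Z\iff\mathfrak p\in Z$.

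By Weierstrass preparation, height-one primes other than $(p)$ are Galois orbits of points $x$ over $K_0$ with $|x|<1$. Contraction sends $|x|$ to $|x|^p$. Primes lying over a given prime exist because $\varphi$ is finite. The prime $(E)$ has radius $|\pi|$.
\begin{itemize}
\item If $(E)\in Z$, take a chain $\mathfrak q_1,\mathfrak q_2,\dots$ with $\varphi^{-1}(\mathfrak q_1)=(E)$ and $\varphi^{-1}(\mathfrak q_{k+1})=\mathfrak q_k$. Its radii are $|\pi|^{1/p^k}>|\pi|$, so it avoids $(E)$, and by the equivalence above it lies in $Z$.
\item If $(E)\notin Z$, take $\mathfrak q\in Z$ with $x\neq0$. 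All its iterated contractions lie in $Z$ and are pairwise distinct, since their radii strictly decrease.
\end{itemize}
Either way $Z$ is infinite, a contradiction. Together with the computation at $(z)$, this gives $Z=\emptyset$. So $T$ is supported at $(p,z)$, hence is $p$-power torsion, hence $T=0$. With this inserted, your argument is complete.
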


We next recall the low-ramification structure theorem for torsion
Breuil--Kisin modules, following \cite{CL09} and \cite[Section~6]{M21}.

\begin{definition}[{\cite[Section~2.1]{CL09}}]
\begin{itemize}
  \item[(1)] Let $\BKMod^{r,\varphi}$ be the category of Breuil--Kisin
  modules whose $E$-height is contained in $[0,r]$.

  \item[(2)] Let $\BKModone^{r,\varphi}$ be the full subcategory of
  $\BKMod^{r,\varphi}$ spanned by the objects that are finite free over
  $\mS_1:=\mS/p$.

  \item[(3)] Let $\BKModinf^{r,\varphi}$ be the smallest full subcategory of
  $\BKMod^{r,\varphi}$ that contains $\BKModone^{r,\varphi}$ and is stable
  under extensions.
\end{itemize}
\end{definition}

Liu proved that an
object $\fM\in\BKMod^{r,\varphi}$ belongs to $\BKModinf^{r,\varphi}$ if and
only if its underlying $\mS$-module is $p$-power torsion and has no
$z$-torsion; see \cite[Section~2.3]{L07}.

\begin{thm}[{\cite[Theorem~6.20]{M21}}]\label{bkstruct}
Assume $er<p-1$.  For every $\fM\in\BKModinf^{r,\varphi}$, the underlying
$\mS$-module of $\fM$ is isomorphic to
\[
  \bigoplus_{i=1}^n \mS/p^{a_i}
\]
for some integers $a_i\geq 1$.
\end{thm}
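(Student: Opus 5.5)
The plan is to reduce the statement to a freeness property of certain $\mS_1$-subquotients of $\fM$, and then to derive that property from the height bound $er<p-1$.

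\textbf{Step 1 (module-theoretic criterion).} Let $\fM\in\BKModinf^{r,\varphi}$. By Liu's characterization recalled above, $\fM$ is finitely generated, killed by some $p^N$, and has no $z$-torsion. For $0\le i<N$ consider the $\mS_1$-modules
\[
  \gr^i\fM:=p^i\fM/p^{i+1}\fM,\qquad \fM[p]\cap p^i\fM .
\]
Since $\mS_1=k[[z]]$ is a DVR, an $\mS_1$-module of this kind is free exactly when it has no $z$-torsion. I claim that if all these modules are $z$-torsion free, then $\fM\simeq\bigoplus_i\mS/p^{a_i}$. The argument is the usual elementary-divisor argument over a DVR, run with $p$ in place of the uniformizer and $k[[z]]$ in place of the residue field. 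Choose $k[[z]]$-bases adapted to the filtration
\[
  \fM[p]\cap p^{N-1}\fM\subset \fM[p]\cap p^{N-2}\fM\subset\cdots\subset\fM[p].
\]
The successive quotients are again free because they inject into the $\gr^i\fM$. Lift a basis element lying in $\fM[p]\cap p^{a-1}\fM$ but not in $p^{a}\fM$ to an element $p^{a-1}y$. The resulting elements $y$ generate $\fM$ by Nakayama, since $\mS$ is local and $p$-adically complete. Their annihilators are exactly $p^{a}\mS$, because $\fM$ has no $z$-torsion. Comparing lengths over $\mS_{(p)}$ then shows that the induced map $\bigoplus\mS/p^{a}\to\fM$ is an isomorphism.

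\textbf{Step 2 (reduction to an abelian-category statement).} The modules in Step 1 are kernels, images and cokernels of the morphisms $p^{j}\colon\fM\to\fM$. Each such morphism is $\varphi$-equivariant, so these constructions produce $p$-torsion Breuil--Kisin modules of $E$-height in $[0,r]$, viewed as modules over $\mS_1$. Thus it suffices to show the following. When $er<p-1$, every $p$-torsion $\varphi$-module of height $\le r$ arising as a subquotient of an object of $\BKModinf^{r,\varphi}$ by these operations has no $z$-torsion, i.e.\ lies in $\BKModone^{r,\varphi}$. Equivalently, $\BKModone^{r,\varphi}$ is an abelian subcategory, and kernels and cokernels computed on underlying $\mS$-modules stay $z$-torsion free. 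I would prove this by induction on the length of an extension filtration of $\fM$ with graded pieces in $\BKModone^{r,\varphi}$. At each stage one uses the snake lemma to control $\gr^i$ and $p$-torsion through the extension.

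\textbf{Step 3 (main obstacle: ruling out $z$-torsion).} The key lemma is the following. Let $Q$ be a finite-length $k[[z]]$-module with a $\varphi$-semilinear structure
\[
  \Phi\colon\varphi^*Q\to Q,
\]
coming from a morphism in $\BKModone^{r,\varphi}$, so that the height condition gives $z^{er}Q\subset\Im\Phi$ (using $E\equiv z^e \bmod p$). I claim $Q=0$ once $er<p-1$. The idea is a valuation count. If $Q$ has a cyclic summand $k[[z]]/z^m$, then $\varphi^*$ multiplies $z$-exponents by $p$. Comparing the $z$-adic filtrations of source and target, one aims to show that the cokernel of $\Phi$ cannot be killed by $z^{er}$ unless $p\le er+1$.

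This step is where the hypothesis is genuinely used, and I expect it to be the hardest. It is the Caruso--Liu argument: for $er<p-1$ the category $\BKModone^{r,\varphi}$ is abelian via the comparison with Fontaine--Laffaille-type or finite flat objects, and morphisms there are strict for the $z$-adic topology. I would first try the direct valuation count on $Q$. If that becomes unwieldy, I would instead invoke the full faithfulness and exactness of the Galois functor $T_{\mS}$ on $\BKModinf^{r,\varphi}$ for $er<p-1$. That route detects the $z$-torsion through a length mismatch between $T_{\mS}$ of the kernel and of the cokernel.
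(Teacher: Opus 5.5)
Your Steps~1 and~2 follow the route the paper uses for its variant, Proposition~\ref{modfieq}; the statement under review is only cited from \cite{M21}. That route has two parts. The first is an elementary-divisor criterion via the pieces $p^i\fM/p^{i+1}\fM$ (Lemma~\ref{pelemcrit}, of which your socle-filtration version is a dual form). The second is an induction along the extension filtration through the snake lemma (Lemmas~\ref{cokcone} and~\ref{grpextone}). There $er<p-1$ enters only through the abelianness of $\BKModone^{r,\varphi}$ (Lemma~\ref{bkoneab}). So everything rests on your Step~3, and the key lemma you state there is false.

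A finite-length $k[[z]]$-module $Q$ with $\Phi\colon\varphi^*Q\to Q$ and $z^{er}Q\subset\Im\Phi$ need not vanish. Take $Q=\mS/(p,z)=k$ with $\Phi$ the natural surjection $\varphi^*Q=k[[z]]/(z^p)\to k$. This is even a Breuil--Kisin module of $E$-height $[0,0]$ (compare $\ol{\fM}$ in Proposition~\ref{bkdecomp}). It also occurs as a cokernel: $1\mapsto z$ defines a $\varphi$-equivariant map from $(\mS_1,\Phi(1)=z^{p-1})$ to $(\mS_1,\Phi(1)=1)$ with cokernel $k$, and the induced Frobenius on $k$ is surjective. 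What excludes this when $er<p-1$ is that the \emph{source} has $\Im\Phi=z^{p-1}\mS_1$, so it lies in $\BKModone^{r,\varphi}$ only if $er\ge p-1$. That information is lost in the Frobenius structure the torsion quotient inherits, so no valuation count on $Q$ alone can succeed.

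The statement you actually need compares lattices. Let $\mf{N}_1\subset\mf{N}_2$ be $\varphi$-stable $k[[z]]$-lattices with $\mf{N}_1[1/z]=\mf{N}_2[1/z]$ and $\mf{N}_1$ of height $\le r$. Then $\mf{N}_1=\mf{N}_2$ when $er<p-1$.

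\begin{itemize}
\item Write $e^{(1)}=e^{(2)}X$ and $\Phi(1\otimes e^{(i)})=e^{(i)}A_i$. Then $XA_1=A_2\varphi(X)$, so $\varphi(X)^{-1}=A_1^{-1}X^{-1}A_2$.
\item The matrix $z^{er}A_1^{-1}$ is integral, and $X^{-1}$ has entries of valuation $\ge -s$, where $z^s$ is the largest elementary divisor of $X$.
\item On the other hand, $\varphi(X)^{-1}$ has an entry of valuation exactly $-ps$.
\item Hence $(p-1)s\le er<p-1$, so $s=0$.
\end{itemize}

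Now let $f\colon\fM\to\mf{N}$ be a morphism in $\BKModone^{r,\varphi}$. Apply the lattice statement to $\mf{N}_1=f(\fM)$, which has height $\le r$ because it is a quotient of $\fM$, and to its saturation $\mf{N}_2$ in $\mf{N}$. This shows cokernels are $z$-torsion free. Kernels then have height $\le r$ by flatness of $\varphi$ and injectivity of $\Phi$ on $\varphi^*f(\fM)$. This is exactly the abelianness input your Step~2 needs.

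Your fallback through full faithfulness of $T_{\mS}$ is the same mechanism, since $T_{\mS}(\mf{N}_1)=T_{\mS}(\mf{N}_2)$ forces $\mf{N}_1=\mf{N}_2$. However, ``a length mismatch between $T_{\mS}$ of the kernel and of the cokernel'' is not yet an argument.

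Separately, in Step~1 the surjectivity ``by Nakayama'' is not justified, and localizing at $(p)$ only shows the cokernel has finite length. Instead, check that your map induces isomorphisms on every $p^i(\cdot)/p^{i+1}(\cdot)$, as in Lemma~\ref{pelemcrit}.
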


\subsection{A variant allowing finite free Breuil--Kisin modules}

In this subsection we prove a variant of Theorem~\ref{bkstruct} in which
finite free Breuil--Kisin modules are allowed together with torsion ones.

\begin{definition}
Let $\overline{\Mod}^{r,\varphi}_{/\mS_1}$ be the full subcategory of
$\widetilde{\Mod}^{r,\varphi}_{/\mS}$ spanned by those objects whose underlying
$\mS$-module is either finite free over $\mS_1=\mS/p=k[[z]]$ or finite free
over $\mS$.

We define
\[
  \overline{\Mod}^{r,\varphi}_{/\mS_\infty}
\]
to be the smallest full subcategory of
\[
  \widetilde{\Mod}^{\infty,\varphi}_{/\mS}
  :=
  \bigcup_{n>0}\widetilde{\Mod}^{n,\varphi}_{/\mS}
\]
that contains $\overline{\Mod}^{r,\varphi}_{/\mS_1}$ and is stable under
extensions.
\end{definition}

\begin{remark}
The category $\overline{\Mod}^{r,\varphi}_{/\mS_\infty}$ need not be a full
subcategory of $\widetilde{\Mod}^{r,\varphi}_{/\mS}$: successive extensions may
increase the $E$-height.  Thus an object of
$\overline{\Mod}^{r,\varphi}_{/\mS_\infty}$ can have $E$-height larger than
$r$.
\end{remark}

\begin{definition}
Let $\overline{\ModFI}^{r,\varphi}_{/\mS_\infty}$ be the full subcategory of
$\overline{\Mod}^{r,\varphi}_{/\mS_\infty}$ spanned by those objects $M$
whose underlying $\mS$-module is of the form
\[
  M\simeq
  \mS^{\oplus m}
  \oplus
  \bigoplus_{i=1}^n \mS/p^{a_i}
\]
for some integers $m,n\geq 0$ and $a_i\geq 1$.
\end{definition}

\begin{definition}
Let $\mathcal C_1^{r,\varphi}$ be the full subcategory of
$\widetilde{\Mod}^{\infty,\varphi}_{/\mS}$ consisting of objects killed by $p$ which
admit a finite filtration whose graded pieces lie in
$\Mod^{r,\varphi}_{/\mS_1}$.
\end{definition}

\begin{remark}
Every object of $\mathcal C_1^{r,\varphi}$ is finite free over
$\mS_1=k[[z]]$.  Indeed, it is obtained by successive extensions of finite
free $k[[z]]$-modules, and $k[[z]]$ is a principal ideal domain.
\end{remark}

We shall use the following low-ramification abelianity statement.

\begin{lemma}[{\cite[Corollary~6.15]{M21}}]\label{bkoneab}
Assume $er<p-1$.  Then $\Mod^{r,\varphi}_{/\mS_1}$ is an abelian category.
Moreover, for every morphism in $\Mod^{r,\varphi}_{/\mS_1}$, the
categorical kernel and cokernel agree with the usual kernel and cokernel of
the underlying $\mS$-modules.
\end{lemma}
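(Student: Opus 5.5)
The statement is cited from \cite[Corollary~6.15]{M21}, but here is how I would reprove it. Objects of $\Mod^{r,\varphi}_{/\mS_1}$ are finite free $k[[z]]$-modules $\fM$ with a $\varphi$-semilinear $\Phi$ whose linearization satisfies $E^r\fM\subset\Im(\Phi_{\fM})\subset\fM$. Since $p=0$, this says $z^{er}\fM\subset\Im(\Phi_{\fM})$. The category is additive. It therefore suffices to show two things for a morphism $f\colon\fM\to\fN$: the naive kernel and cokernel, with their induced Frobenius, again lie in $\Mod^{r,\varphi}_{/\mS_1}$; and the canonical map $\operatorname{coim} f\to\operatorname{im} f$ is an isomorphism. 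Once the naive kernel and cokernel lie in the category, they are automatically the categorical ones.

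\textbf{Kernel.} The kernel $\Ker f$ is a $k[[z]]$-submodule of a finite free module, hence finite free, and $\Phi$ restricts to it. For the height bound I use that $\varphi\colon k[[z]]\to k[[z]]$ is flat, so
\[
\varphi^*\Ker f=\Ker(\varphi^*f).
\]
If $x\in\Ker f$, then $z^{er}x=\Phi_{\fM}(y)$ for some $y\in\varphi^*\fM$. Applying $f$ gives $\Phi_{\fN}(\varphi^*f(y))=0$. Since $\Phi_{\fN}$ is injective after inverting $z$ and $\varphi^*\fN$ is torsion-free, $\Phi_{\fN}$ is injective, so $y\in\varphi^*\Ker f$.

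\textbf{Cokernel.} The height condition passes to any quotient with induced Frobenius, since $\Im(\Phi)$ surjects. The real issue, and the main obstacle, is that $\Coker f$ might have $z$-torsion, so it would not be finite free over $k[[z]]$. Equivalently, $\operatorname{im} f$ need not be saturated in $\fN$. Let $\fN'$ be the saturation of $f(\fM)$, and put $T=\fN'/f(\fM)$, a finite-length $k[[z]]$-module with induced $\Phi_T$.

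I would use the height bound on $\fM$, transported through the quotient $f(\fM)$, together with the height-$\le r$ property of $\fN'$, which holds because it is saturated in $\fN$ by the kernel-type argument above. The ranks agree, so $\Phi$ induces a map on the lengths of elementary divisors. The key inequality is the following: if $T\ne0$ has elementary divisors $z^{b_j}$, then $\varphi^*$ multiplies the divisors by $p$, i.e.\ $\varphi^*T$ has divisors $z^{pb_j}$. Comparing $\Phi_{\fN'}(\varphi^*f(\fM))\subset f(\fM)$ with $z^{er}\fN'\subset\Phi_{\fN'}(\varphi^*\fN')$ then yields, for the maximal divisor $b$, an inequality of the form $pb\le b+er$, that is,
\[
b\le \frac{er}{p-1}<1.
\]
So $b=0$ and $T=0$. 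This is exactly the Fontaine--Laffaille-type counting used in \cite{M21}, and the hypothesis $er<p-1$ enters precisely here. I expect the careful bookkeeping in this elementary-divisor comparison, in particular showing $\Phi$ maps $\varphi^*T$ compatibly, to be the hardest step.

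\textbf{Coimage equals image.} With saturation established, $\Coker f=\fN/f(\fM)$ is finite free, and its height lies in $[0,r]$ as noted. The map $\fM/\Ker f\to f(\fM)$ is an isomorphism of underlying modules compatible with $\Phi$, so coimage equals image. This proves that $\Mod^{r,\varphi}_{/\mS_1}$ is abelian, with kernels and cokernels computed on underlying $\mS$-modules.
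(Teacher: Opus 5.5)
The paper gives no proof of this lemma; it imports it from \cite[Corollary~6.15]{M21}. Your self-contained route is sound in outline. The kernel step is correct: flatness of $\varphi$ on $k[[z]]$ gives $\varphi^*\Ker f=\Ker(\varphi^*f)$, and $\Phi_{\fN}$ is injective. Reducing both the cokernel and the coimage--image comparison to saturation of $f(\fM)$ in $\fN$ is also correct, and saturation is really the uniqueness of $\varphi$-stable lattices of height $\le r$ when $er<p-1$. You should also record that $\Phi_{\Coker f}[1/z]$ is an isomorphism, so the cokernel is a Breuil--Kisin module in the paper's sense. This is routine: the map is surjective by the height bound, and the two sides have equal rank.

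The key inequality, however, does not follow from the two containments you name. The containments $\Phi(\varphi^*f(\fM))\subset f(\fM)$ and $z^{er}\fN'\subset\Phi(\varphi^*\fN')$ both hold for $\fN'=k[[z]]e$ with $\varphi(e)=e$ and $f(\fM)$ replaced by $z^b\fN'$, for every $b$. So they cannot bound $b$. What is needed is the height bound on $f(\fM)$, inherited from $\fM$, combined with the trivial containment for $\fN'$ and the fact that $z^b$ kills $T$:
\[
z^{b+er}\,\Phi(\varphi^*\fN')\subset z^{b+er}\fN'\subset z^{er}f(\fM)\subset \Phi(\varphi^*f(\fM)).
\]
Since $\varphi^*\fN'\subset\varphi^*\fN$ by flatness and $\Phi_{\fN}$ is injective, this says $z^{b+er}$ kills $\varphi^*\fN'/\varphi^*f(\fM)\cong\varphi^*T\cong\bigoplus_j k[[z]]/z^{pb_j}$. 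The annihilator of the latter is $(z^{pb})$, so $pb\le b+er$, and $er<p-1$ forces $b=0$.

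In particular, you do not need the height bound on $\fN'$, nor the detour through the kernel argument that you use to obtain it. No further bookkeeping of how $\Phi$ acts on $\varphi^*T$ is required beyond this injectivity. With this correction your argument gives a complete proof of the cited statement.
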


\begin{lemma}\label{cokcone}
Assume $er<p-1$.  Let $Q$ be an object of
$\Mod^{r,\varphi}_{/\mS_1}$, and let $N$ be an object of
$\mathcal C_1^{r,\varphi}$.  For any morphism
\[
  f:Q\longrightarrow N
\]
in $\widetilde{\Mod}^{\infty,\varphi}_{/\mS}$, the usual $\mS_1$-modules
$\im(f)$ and $\Coker(f)$, equipped with their induced Frobenius maps, belong
to $\mathcal C_1^{r,\varphi}$.
\end{lemma}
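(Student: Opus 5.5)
Since $N$ lies in $\mathcal C_1^{r,\varphi}$, it carries a finite filtration
\[
  0=N_0\subset N_1\subset\cdots\subset N_\ell=N
\]
by sub-Breuil--Kisin modules, each killed by $p$, with graded pieces $N_j/N_{j-1}$ in $\Mod^{r,\varphi}_{/\mS_1}$. I will argue by induction on $\ell$. If $\ell=1$, then $N$ itself lies in $\Mod^{r,\varphi}_{/\mS_1}$. The map $f$ is a morphism of Breuil--Kisin modules between objects of this category, so Lemma~\ref{bkoneab} tells us that $\im(f)$ and $\Coker(f)$ are objects of $\Mod^{r,\varphi}_{/\mS_1}$. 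Here $\im(f)$ is the kernel of $N\to\Coker(f)$, so it is also computed on underlying modules. In particular both are finite free over $\mS_1$ of $E$-height in $[0,r]$, hence lie in $\mathcal C_1^{r,\varphi}$ with a one-step filtration.

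For the inductive step, put $N'=N_{\ell-1}$ and $N''=N/N'\in\Mod^{r,\varphi}_{/\mS_1}$, and let $\bar f\colon Q\to N''$ be the composite. The base case applies to $\bar f$, so $\im(\bar f)$ and $\Coker(\bar f)$ lie in $\Mod^{r,\varphi}_{/\mS_1}$, and $K:=\Ker(\bar f)$ lies there as well by Lemma~\ref{bkoneab}. The restriction $f|_K\colon K\to N'$ is a morphism from an object of $\Mod^{r,\varphi}_{/\mS_1}$ to an object of $\mathcal C_1^{r,\varphi}$ whose filtration has length $\ell-1$. By induction, $\im(f|_K)=\im(f)\cap N'$ and $\Coker(f|_K)=N'/(\im(f)\cap N')$ lie in $\mathcal C_1^{r,\varphi}$. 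We then have exact sequences of $\mS$-modules with compatible Frobenii:
\[
  0\to \im(f)\cap N'\to \im(f)\to \im(\bar f)\to 0,
\]
\[
  0\to N'/(\im(f)\cap N')\to \Coker(f)\to \Coker(\bar f)\to 0 .
\]
The second sequence comes from the snake lemma, or directly: $\Coker(f)=N/\im(f)$ surjects onto $N''/\im(\bar f)$, with kernel $N'/(N'\cap\im(f))$. In each sequence the outer terms have filtrations with graded pieces in $\Mod^{r,\varphi}_{/\mS_1}$. Concatenating these filtrations gives such a filtration on the middle term, and the middle terms are killed by $p$ since $N$ is. So $\im(f)$ and $\Coker(f)$ lie in $\mathcal C_1^{r,\varphi}$.

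The main point needing care is that the sub- and quotient objects are genuine Breuil--Kisin modules in $\widetilde{\Mod}^{\infty,\varphi}_{/\mS}$, meaning the Frobenius induces an isomorphism after inverting $E$, and that the filtration steps are sub-Breuil--Kisin modules. I would handle this by noting that $\varphi$ is flat on $\mS$ and localization is exact. Then $\varphi^*(-)[1/E]$ is exact, so the isomorphisms $\Phi$ on $Q$ and $N$ induce isomorphisms on images, kernels and cokernels, and the finiteness conditions hold because $\mS$ is noetherian. With this in place, the height conditions are needed only on the graded pieces, where they come from Lemma~\ref{bkoneab}. Note that no height bound is claimed for $\im(f)$ or $\Coker(f)$ themselves, consistent with the remark that extensions may enlarge the $E$-height.
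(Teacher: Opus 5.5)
Your proof is correct and follows the paper's argument essentially verbatim. Both argue by induction on the filtration length of $N$, take the base case from Lemma~\ref{bkoneab}, and in the inductive step use the composite $Q\to N''$, the restriction of $f$ to its kernel, and the two snake-lemma sequences for $\im(f)$ and $\Coker(f)$ combined with extension-closure.

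Your extra checks fill in points the paper leaves implicit when it invokes stability of $\mathcal C_1^{r,\varphi}$ under extensions. These are that the middle terms are killed by $p$ because $N$ is, and that exactness of $\varphi^*(-)[1/E]$ makes all subquotients genuine Breuil--Kisin modules.
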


\begin{proof}
We argue by induction on the length of a filtration of $N$ whose graded
pieces lie in $\Mod^{r,\varphi}_{/\mS_1}$.

If $N$ lies in $\Mod^{r,\varphi}_{/\mS_1}$, then $f$ is a morphism in the
abelian category $\Mod^{r,\varphi}_{/\mS_1}$.  By Lemma~\ref{bkoneab}, the
usual image and cokernel of $f$ belong to $\Mod^{r,\varphi}_{/\mS_1}$, and
hence to $\mathcal C_1^{r,\varphi}$.

Now suppose that $N$ fits into an exact sequence
\[
  0\longrightarrow N'\longrightarrow N\longrightarrow N''\longrightarrow 0,
\]
where $N'\in\mathcal C_1^{r,\varphi}$ has smaller filtration length and
$N''\in\Mod^{r,\varphi}_{/\mS_1}$.  Let
\[
  g:Q\longrightarrow N''
\]
be the composite of $f$ with $N\to N''$.  Since $Q$ and $N''$ lie in the
abelian category $\Mod^{r,\varphi}_{/\mS_1}$, the usual $\mS_1$-modules
$\ker(g)$, $\im(g)$, and $\Coker(g)$ lie in
$\Mod^{r,\varphi}_{/\mS_1}$.

The morphism $f$ induces a morphism
\[
  \ker(g)\longrightarrow N'.
\]
By the induction hypothesis, its image and cokernel belong to
$\mathcal C_1^{r,\varphi}$.  The snake lemma gives exact sequences
\[
  0\longrightarrow
  \im\bigl(\ker(g)\to N'\bigr)
  \longrightarrow
  \im(f)
  \longrightarrow
  \im(g)
  \longrightarrow 0
\]
and
\[
  0\longrightarrow
  \Coker\bigl(\ker(g)\to N'\bigr)
  \longrightarrow
  \Coker(f)
  \longrightarrow
  \Coker(g)
  \longrightarrow 0.
\]
Since $\mathcal C_1^{r,\varphi}$ is stable under extensions, both
$\im(f)$ and $\Coker(f)$ belong to $\mathcal C_1^{r,\varphi}$.
\end{proof}

\begin{definition}
For an $\mS$-module $M$ and an integer $j\geq 0$, put
\[
  \gr_p^j M:=p^jM/p^{j+1}M.
\]
\end{definition}

\begin{lemma}\label{grpextone}
Assume $er<p-1$.  Let
\[
  0\longrightarrow A\longrightarrow B\longrightarrow Q\longrightarrow 0
\]
be an exact sequence in $\widetilde{\Mod}^{\infty,\varphi}_{/\mS}$, where
$Q\in\Mod^{r,\varphi}_{/\mS_1}$.  Suppose that
\[
  \gr_p^j A\in\mathcal C_1^{r,\varphi}
  \qquad
  \text{for all } j\geq 0.
\]
Then
\[
  \gr_p^j B\in\mathcal C_1^{r,\varphi}
  \qquad
  \text{for all } j\geq 0.
\]
\end{lemma}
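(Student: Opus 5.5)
Fix $j\geq 0$. The plan is to produce a short exact sequence of Breuil--Kisin modules
\[
0\longrightarrow K_j\longrightarrow \gr_p^j B\longrightarrow Q_j\longrightarrow 0
\]
in which $K_j$ is a quotient of $\gr_p^j A$ and $Q_j$ is either $Q$ or $0$. We will then show that both pieces lie in $\mathcal C_1^{r,\varphi}$; since this category is stable under extensions, $\gr_p^jB$ will lie in it as well. Note that each $\gr_p^jM$ of a Breuil--Kisin module is again one: multiplication by $p^j$ is $\varphi$-equivariant, and the quotient and submodule inherit Frobenius maps. This works in $\widetilde{\Mod}^{\infty,\varphi}_{/\mS}$, with the $E$-height possibly changing, and that is harmless there.

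For $j=0$, tensor the given sequence with $\mS/p$. This gives $A/pA\to B/pB\to Q\to 0$, with $Q/pQ=Q$. The image of $A/pA$ in $B/pB$ is $A/(A\cap pB)$, a quotient of $\gr_p^0A=A/pA$. For $j\geq 1$, the map $B\to Q$ kills $pB$ because $pQ=0$, so $p^jB\subset A$. The inclusions
\[
p^jA\subset p^jB\subset p^{j-1}A, \qquad p^{j+1}A\subset p^{j+1}B\subset p^jA
\]
then show that $\gr_p^jB=p^jB/p^{j+1}B$ receives $\gr_p^jA$ only partially. I will instead use the surjection $p^{j-1}A\cdot$ -- more precisely, multiplication by $p$ induces a surjection $\gr_p^{j-1}B\to\gr_p^jB$. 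Since $\gr_p^{j-1}B$ is an extension of $Q$ (or of something in $\mathcal C_1$) by a quotient of $\gr_p^{j-1}A$, this reduces everything to closure properties of $\mathcal C_1^{r,\varphi}$ under quotients by Frobenius-stable submodules.

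So the key technical claim is the following. If $N\in\mathcal C_1^{r,\varphi}$ and $N\to N'$ is a surjection in $\widetilde{\Mod}^{\infty,\varphi}_{/\mS}$ whose kernel is the image of a map from an object of $\mathcal C_1^{r,\varphi}$, then $N'\in\mathcal C_1^{r,\varphi}$. To prove it, I would generalize Lemma~\ref{cokcone} from sources $Q\in\Mod^{r,\varphi}_{/\mS_1}$ to sources in $\mathcal C_1^{r,\varphi}$. The route is induction on the filtration length of the source: split the source as an extension $0\to P'\to P\to P''\to0$, take the cokernel of $P'\to N$ by induction, and then take the cokernel of the induced map from $P''$, again by induction. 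Both images and cokernels are preserved.

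With this in hand, the argument proceeds by induction on $j$. Write the kernel of $\gr_p^jB\to Q$ (for $j=0$) as the image of $\gr_p^0A\to\gr_p^0B$, which lies in $\mathcal C_1$ by the generalized Lemma~\ref{cokcone}. Then $\gr_p^0B$ is an extension of $Q$ by it. For $j\geq1$, I would express $\gr_p^jB$ as the image of the map $\gr_p^jA\oplus\gr_p^{j-1}B\to\gr_p^{j-1}A$-level data. It is cleaner to use exactness: $p^jB\subset p^{j-1}A$ gives $\gr_p^jB$ as an extension of the image of $p^jB$ in $\gr_p^{j-1}A$ by $(p^jB\cap p^jA)/p^{j+1}B$, which is a quotient of $\gr_p^jA$.

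The main obstacle is this last step. One must identify these pieces as images or cokernels of morphisms between objects already known to lie in $\mathcal C_1$, so that the generalized Lemma~\ref{cokcone} applies. Here the hypothesis $er<p-1$ enters through Lemma~\ref{bkoneab}, which guarantees that no $z$-torsion or non-free quotients appear. My first attempt would be to realize the image of $p^jB$ in $\gr_p^{j-1}A$ as the image of the map $\gr_p^{j-1}B\xrightarrow{p}\gr_p^{j-1}A$.
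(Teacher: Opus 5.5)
There is a genuine gap, and it is at the step you yourself flag as ``the main obstacle''. Your decomposition is the right one: for $j\ge 1$,
\[
0\to p^jA/p^{j+1}B\to \gr_p^jB\to p^jB/p^jA\to 0,
\]
and for $j=0$, $0\to A/pB\to B/pB\to Q\to 0$. Your generalization of Lemma~\ref{cokcone} to sources in $\mathcal C_1^{r,\varphi}$ is also correct. Granting $\gr_p^{j-1}B\in\mathcal C_1^{r,\varphi}$, it does place the right-hand term $p^jB/p^jA=\im\bigl(\gr_p^{j-1}B\xrightarrow{p}\gr_p^{j-1}A\bigr)$ in $\mathcal C_1^{r,\varphi}$.

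The left-hand term, however, is only ever described as ``a quotient of $\gr_p^jA$'', and that is not enough. $\mathcal C_1^{r,\varphi}$ is not closed under quotients by Frobenius-stable submodules: for example, $z\mS_1\subset\mS_1$ is $\varphi$-stable, but $\mS_1/z\mS_1=k$ has $z$-torsion. So your intermediate reduction to ``closure under quotients by Frobenius-stable submodules'' is false as stated. Your refined key claim applies only once the kernel $\ker(\gr_p^jA\to\gr_p^jB)=p^{j+1}B/p^{j+1}A$ is exhibited as the image of a $\varphi$-equivariant map out of an object already known to lie in $\mathcal C_1^{r,\varphi}$, and you never produce such a map.

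For $j=0$ the problem is sharper. You apply the generalized Lemma~\ref{cokcone} to $\gr_p^0A\to\gr_p^0B$, whose target is the very module whose membership is in question, so that step is circular. Since your induction on $j$ needs the full statement at $j-1$, the right-hand term for $j\ge1$ is not secured either.

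The missing idea is the connecting map
\[
c_j\colon Q\longrightarrow \gr_p^jA,\qquad q\longmapsto p^{j+1}\tilde q \bmod p^{j+1}A,
\]
for any lift $\tilde q\in B$. It is well defined because $pB\subset A$, and Frobenius-compatible because $\varphi(p)=p$. One checks that $\im(c_j)=p^{j+1}B/p^{j+1}A$. Hence the left-hand term is $\Coker(c_j)$ (for $j=0$, $A/pB=\Coker(c_0)$), and the right-hand term is $\im(c_{j-1})$. Since the source is $Q\in\Mod^{r,\varphi}_{/\mS_1}$, the original Lemma~\ref{cokcone} puts both terms in $\mathcal C_1^{r,\varphi}$ directly. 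No induction on $j$ and no generalization of Lemma~\ref{cokcone} are needed; this is the paper's argument.

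If you want to stay within your inductive framework, for $j\ge1$ you can use $\gr_p^{j-1}B\xrightarrow{p^2}\gr_p^jA$, whose image is $p^{j+1}B/p^{j+1}A$. The base case $j=0$ still requires $c_0$, i.e.\ the observation that $B/pB\xrightarrow{p}A/pA$ factors through $Q$.
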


\begin{proof}
For each $j\geq 0$, the extension defines a natural morphism
\[
  c_j:Q\longrightarrow \gr_p^j A.
\]
Indeed, for $q\in Q$, choose a lift $\tilde q\in B$.  Since $pQ=0$, we have
$p\tilde q\in A$.  We set
\[
  c_j(q):=p^j(p\tilde q)\bmod p^{j+1}A.
\]
This is independent of the choice of $\tilde q$, because replacing
$\tilde q$ by $\tilde q+a$ with $a\in A$ changes $p^j(p\tilde q)$ by
$p^{j+1}a$.  Since $\varphi(p)=p$, the map $c_j$ is compatible with
Frobenius.

By Lemma~\ref{cokcone}, both $\im(c_j)$ and $\Coker(c_j)$ belong to
$\mathcal C_1^{r,\varphi}$.

We claim that there are exact sequences
\[
  0\longrightarrow
  \Coker(c_0)
  \longrightarrow
  \gr_p^0 B
  \longrightarrow
  Q
  \longrightarrow 0
\]
and, for $j\geq 1$,
\[
  0\longrightarrow
  \Coker(c_j)
  \longrightarrow
  \gr_p^j B
  \longrightarrow
  \im(c_{j-1})
  \longrightarrow 0.
\]
For $j=0$, the map $\gr_p^0B=B/pB\to Q$ is induced by $B\to Q$.  Its kernel
is $A/(A\cap pB)$, and $(A\cap pB)/pA$ is exactly $\im(c_0)$.  Hence this
kernel is $\Coker(c_0)$.

For $j\geq 1$, define
\[
  \gr_p^jB\longrightarrow \gr_p^{j-1}A
\]
by sending the class of $p^jb$ to the class of $p^{j-1}(pb)$.  Its image is
$\im(c_{j-1})$.  Its kernel is the image of $\gr_p^jA$, and the kernel of
$\gr_p^jA\to\gr_p^jB$ is $\im(c_j)$.  Therefore this kernel is
$\Coker(c_j)$, proving the claimed exact sequence.

Since $\mathcal C_1^{r,\varphi}$ is stable under extensions, the displayed
exact sequences show that $\gr_p^jB$ belongs to $\mathcal C_1^{r,\varphi}$
for all $j\geq 0$.
\end{proof}

\begin{lemma}\label{grpextfree}
Let
\[
  0\longrightarrow A\longrightarrow B\longrightarrow P\longrightarrow 0
\]
be an exact sequence in $\widetilde{\Mod}^{\infty,\varphi}_{/\mS}$, where $P$ is
finite free over $\mS$ and lies in $\widetilde{\Mod}^{r,\varphi}_{/\mS}$.  Suppose
that
\[
  \gr_p^j A\in\mathcal C_1^{r,\varphi}
  \qquad
  \text{for all } j\geq 0.
\]
Then
\[
  \gr_p^j B\in\mathcal C_1^{r,\varphi}
  \qquad
  \text{for all } j\geq 0.
\]
\end{lemma}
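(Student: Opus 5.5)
Since $P$ is finite free over $\mS$, it is $p$-torsion free. This lets us compute $\gr_p^j B$ directly from $A$ and $P$, without the connecting map $c_j$ used in Lemma~\ref{grpextone}. Indeed, $\Tor_1^{\mS}(P,\mS/p^{j})=0$ for all $j$, so tensoring $0\to A\to B\to P\to 0$ with $\mS/p^{j}$ gives short exact sequences $0\to A/p^jA\to B/p^jB\to P/p^jP\to 0$. The same vanishing shows that $A\cap p^jB=p^jA$ for every $j$: if $p^j b\in A$, then the image of $b$ in $P$ is killed by $p^j$, hence is zero, so $b\in A$.

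From this saturation we obtain exact sequences
\[
  0\longrightarrow \gr_p^j A\longrightarrow \gr_p^j B\longrightarrow \gr_p^j P\longrightarrow 0 .
\]
Injectivity on the left follows from $A\cap p^{j+1}B=p^{j+1}A$. Exactness in the middle follows as well: if $p^jb$ maps to zero in $p^jP/p^{j+1}P$, then, since multiplication by $p^j$ is injective on $P$, we get $b\in A+pB$, and hence $p^jb\in p^jA+p^{j+1}B$. All maps are compatible with Frobenius because $\varphi(p)=p$.

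Next we identify $\gr_p^j P$. Multiplication by $p^j$ gives a Frobenius-equivariant isomorphism $P/pP\simeq \gr_p^jP$, again using $\varphi(p)=p$. Now $P/pP$ is finite free over $\mS_1$. Its $E$-height is still contained in $[0,r]$: reducing the inclusions $E^rP\subset\Im(\Phi_P)\subset P$ modulo $p$ preserves them, because $P$ is free and $\Phi_P$ is injective after inverting $E$. Hence $P/pP\in\Mod^{r,\varphi}_{/\mS_1}\subset\mathcal C_1^{r,\varphi}$. Since $\gr_p^jA\in\mathcal C_1^{r,\varphi}$ by hypothesis and $\mathcal C_1^{r,\varphi}$ is stable under extensions, it follows that $\gr_p^jB\in\mathcal C_1^{r,\varphi}$.

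The main point requiring care is that the reduction $P/pP$ genuinely has $E$-height in $[0,r]$ as a Breuil--Kisin module over $\mS_1$. We need $\Phi_{P/pP}$ to land in $P/pP$ and to have image containing $E^r(P/pP)$. Both follow by taking images of the integral inclusions, since $\varphi^*P\to P$ is an honest map when the height is in $[0,r]$.

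One should also check that $\varphi^*(P/pP)\to P/pP$ is injective after inverting $E$ over $\mS_1$, which is needed for it to be an object of the category. This holds because $E\equiv z^e \pmod p$ and $E^r$ lies in the image, so the determinant of $\Phi_P$ divides $E^{r\cdot\mathrm{rk}}$ and hence its reduction is a nonzero element of $k[[z]]$.

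Unlike Lemma~\ref{grpextone}, no ramification hypothesis $er<p-1$ is needed here.
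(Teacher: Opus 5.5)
Your proposal is correct and matches the paper's proof: $p$-torsion-freeness of $P$ gives the exact sequences $0\to\gr_p^jA\to\gr_p^jB\to\gr_p^jP\to 0$, multiplication by $p^j$ identifies $\gr_p^jP$ with $P/pP\in\Mod^{r,\varphi}_{/\mS_1}$, and extension-closure of $\mathcal C_1^{r,\varphi}$ concludes. The extra details you supply, which the paper only asserts, are all correct: middle exactness, and that $P/pP$ is a genuine Breuil--Kisin module of $E$-height in $[0,r]$.
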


\begin{proof}
Since $P$ is finite free over $\mS$, it is $p$-torsion-free.  Hence, for
each $j\geq 0$, the sequence
\[
  0\longrightarrow
  \gr_p^j A
  \longrightarrow
  \gr_p^j B
  \longrightarrow
  \gr_p^j P
  \longrightarrow 0
\]
is exact.  Multiplication by $p^j$ identifies $P/pP$ with $\gr_p^jP$.
Since $P$ has $E$-height contained in $[0,r]$, the reduction $P/pP$ belongs
to $\Mod^{r,\varphi}_{/\mS_1}$.  Therefore $\gr_p^jP$ belongs to
$\mathcal C_1^{r,\varphi}$ for all $j\geq 0$.  Since
$\mathcal C_1^{r,\varphi}$ is stable under extensions, the claim follows.
\end{proof}

\begin{lemma}\label{pelemcrit}
Let $M$ be a finitely generated $\mS$-module.  Suppose that
\[
  \gr_p^jM=p^jM/p^{j+1}M
\]
is finite free over $\mS_1=k[[z]]$ for every $j\geq 0$.  Then there exist
integers $m,n\geq 0$ and $a_i\geq 1$ such that
\[
  M\simeq
  \mS^{\oplus m}
  \oplus
  \bigoplus_{i=1}^n \mS/p^{a_i}.
\]
\end{lemma}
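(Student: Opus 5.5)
The plan is to peel off the $p$-torsion first, then build the torsion part by lifting bases degree by degree along the $p$-adic filtration.

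\emph{Step 1: separate the free and torsion parts.} Since $M$ is finitely generated over the noetherian ring $\mS$, we have $p^jM=0$ for all $j\geq 1$ if and only if $M=pM$. By Nakayama's lemma ($p$ lies in the Jacobson radical of the local ring $\mS$) this happens only if $M=0$.

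Let $M_{\tors}$ be the $p$-power torsion submodule. It is finitely generated, so it is killed by some $p^N$. Set $M'=M/M_{\tors}$.

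Compare the maps $\gr_p^jM\to\gr_p^{j+1}M$ induced by multiplication by $p$. These are surjective maps between finite free $k[[z]]$-modules. Their ranks are non-increasing in $j$, so they stabilise at some rank $m$ for large $j$. A surjection between free modules of the same rank over a domain is an isomorphism. Hence for $j\gg 0$ multiplication by $p$ gives $\gr_p^jM\simeq\gr_p^{j+1}M$.

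For $j\geq N$, the torsion vanishes in the relevant pieces: $p^jM\cap M_{\tors}=p^j M_{\tors}$ modulo bookkeeping via Artin--Rees, and this is $0$ for $j$ large. So $p^jM$ is $p$-torsion-free and $p^jM\simeq p^{j}M'$.

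\emph{Step 2: the torsion-free part is free.} The module $p^jM'$ is isomorphic to $M'$ via multiplication by $p^j$. Therefore $M'/pM'\simeq\gr_p^jM$ is free over $k[[z]]$.

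A finitely generated $p$-torsion-free $\mS$-module $M'$ with $M'/pM'$ free over $\mS/p$ is free over $\mS$. To see this, lift a basis to get $\mS^m\to M'$. It is surjective by Nakayama. Its kernel $K$ satisfies $K/pK=0$, because $\Tor_1(M',\mS/p)=0$ by $p$-torsion-freeness. Hence $K=0$ by Nakayama.

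Thus $M'\simeq\mS^{\oplus m}$ is projective, and $M\simeq \mS^{\oplus m}\oplus M_{\tors}$.

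\emph{Step 3: reduce the hypothesis to $M_{\tors}$.} We must check that $\gr_p^jM_{\tors}$ is again finite free over $k[[z]]$.

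Here $\gr_p^j(M)=\gr_p^j(\mS^m)\oplus\gr_p^j(M_{\tors})$, since $\gr_p^j$ commutes with direct sums. A direct summand of a finite free $k[[z]]$-module is finitely generated and torsion-free over the PID $k[[z]]$, hence free. So $M_{\tors}$ satisfies the hypothesis.

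\emph{Step 4: the torsion case (main step).} Now assume $p^NM=0$ and all $\gr_p^jM$ are free over $k[[z]]$. The maps $p\colon\gr_p^jM\to\gr_p^{j+1}M$ are surjections of free $k[[z]]$-modules. Their kernels are free, because a submodule of a free module over a PID is free. Moreover each such surjection splits.

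Choose bases compatibly, from the top degree downward. Pick $x\in M$ whose images form bases adapted to the filtration. Concretely, for each $j$ choose elements $y_{j,1},\dots$ in $M$ whose classes $p^jy_{j,l}\in\gr_p^jM$ span a complement of the image of $\gr_p^{j-1}M$ under multiplication by $p$.

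Define a map $\bigoplus_{j,l}\mS/p^{a_{j,l}}\to M$, where $a_{j,l}$ is the first exponent at which $p^{a}y_{j,l}$ vanishes in the graded pieces. Here the indexing has to be set up with care, tracking where the $p$-multiplication chains die.

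This map induces isomorphisms on every $\gr_p^j$, by construction and the splitting. The filtration $p^\bullet$ is finite on both sides, so the map is an isomorphism.

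The key obstacle is this bookkeeping in Step 4. The complements must lift so that $p^{a}y$ is genuinely zero in $M$, not merely in $\gr_p$. I would handle this by descending induction on $N$: apply the statement to $pM$, which inherits the hypothesis since $\gr_p^j(pM)=\gr_p^{j+1}M$. Then lift a decomposition of $pM$ to $M$, using the freeness of $M/pM$ and of the kernel of $M/pM\to pM/p^2M$ to choose the lifts.
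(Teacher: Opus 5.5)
Your Steps 1--3 are the paper's argument. For $j\ge N$, multiplication by $p^j$ identifies $\gr_p^jM$ with $F/pF$, where $F=M/M_{\tors}$. $F$ is then free; you prove the local flatness criterion by hand via $\Tor_1$ and Nakayama, which is fine. The sequence $0\to M_{\tors}\to M\to F\to 0$ splits, and $\gr_p^jM_{\tors}$ is a direct summand of a free $k[[z]]$-module. The Artin--Rees remark is unnecessary: if $p^jx$ is $p$-power torsion then so is $x$, so $p^jx=0$ once $j\ge N$.

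In Step~4 the ``concrete'' recipe is wrong as stated, although your ``from the top degree downward'' idea is the right one.
\begin{itemize}
\item Multiplication by $p\colon\gr_p^{j-1}M\to\gr_p^jM$ is surjective. So the complements you describe are zero for $j\ge 1$, and the recipe reduces to lifting an arbitrary basis of $M/pM$.
\item That fails. Take $M=\mS/p^2\oplus\mS/p$ and the basis $(1,0),(1,1)$ of $M/pM$. Both chains have length $2$, producing $\mS/p^2\oplus\mS/p^2$, whose $\gr_p^1$ has rank $2$ instead of $1$.
\item What is needed is a basis of $G_0=M/pM$ adapted to the filtration by the kernels of $G_0\to\gr_p^jM$. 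This is what the paper chooses. It then lifts to elements $x_i$ and, using $p^{a_i}x_i\in p^{a_i+1}M$, replaces $x_i$ by $x_i-py_i$ so that $p^{a_i}x_i=0$.
\end{itemize}

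Your fallback, induction on $N$ through $pM$, does work and is a genuinely different and tidier route. It is ordinary induction with base case $pM=0$, not descending. Spelled out:
\begin{itemize}
\item By induction, $pM\simeq\bigoplus_i\mS/p^{b_i}$ with generators $w_i$. Write $w_i=px_i$; then $p^{b_i+1}x_i=0$ automatically.
\item Take a basis $\bar v_l$ of the free module $\Ker(M/pM\to pM/p^2M)$ with lifts $v_l$. Then $pv_l\in p^2M$, say $pv_l=p^2u_l$.
\item You must replace $v_l$ by $v_l-pu_l$ to get $pv_l=0$. This is the one correction your sketch leaves implicit.
\item The map $\bigoplus_i\mS/p^{b_i+1}\oplus\bigoplus_l\mS/p\to M$ is an isomorphism on $\gr_p^0$: the $\bar x_i$ lift a basis of $pM/p^2M$ and the $\bar v_l$ span the kernel.
\item It is also an isomorphism on $\gr_p^j$ for $j\ge 1$, since this is $\gr_p^{j-1}$ of the inductive isomorphism for $pM$. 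Hence it is an isomorphism, because the $p$-adic filtrations are finite.
\end{itemize}

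Compared with the paper, the induction confines the lift correction to a single layer (the new $\mS/p$ summands). It also avoids building the adapted basis of $G_0$ all at once. The paper's version handles all layers in one pass, at the cost of that bookkeeping.
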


\begin{proof}
Let $T=M[p^\infty]$ be the $p$-power torsion submodule of $M$.  Since $M$ is
finitely generated over the noetherian ring $\mS$, there exists $N\gg 0$
such that $p^NT=0$.  Put $F=M/T$.

For $j\geq N$, multiplication by $p^j$ identifies
\[
  F/pF
  \simeq
  p^jM/p^{j+1}M
  =
  \gr_p^jM.
\]
Hence $F/pF$ is finite free over $\mS_1$.  Since $F$ is $p$-torsion-free,
the local criterion for flatness implies that $F$ is finite flat over
$\mS$.  As $\mS$ is local, $F$ is finite free over $\mS$.

The exact sequence
\[
  0\longrightarrow T\longrightarrow M\longrightarrow F\longrightarrow 0
\]
therefore splits as a sequence of $\mS$-modules.  Thus it remains to prove
the claim for the $p$-power torsion module $T$.  Since the sequence splits,
each $\gr_p^jT$ is a direct summand of $\gr_p^jM$, and hence is finite free
over $\mS_1$.

Assume now that $T$ is killed by $p^N$, and put
\[
  G_j:=\gr_p^jT
  \qquad
  0\leq j\leq N-1.
\]
Multiplication by $p$ induces surjective $\mS_1$-linear maps
\[
  \mu_j:G_j\longrightarrow G_{j+1}
  \qquad
  0\leq j<N-1.
\]
Since all $G_j$ are finite free over the principal ideal domain $k[[z]]$,
we may choose a basis of $G_0$ adapted to the successive surjections
$\mu_j$.  Equivalently, there are basis elements $e_1,\dots,e_t$ of $G_0$
and integers $a_i\geq 1$ such that, for every $j$, the nonzero elements
\[
  \mu_{j-1}\circ\cdots\circ\mu_0(e_i)
  \qquad
  \text{with } a_i>j
\]
form a basis of $G_j$.

Choose lifts $x_i\in T$ of $e_i$.  After replacing $x_i$ by $x_i-py_i$ for
suitable $y_i\in T$, we may assume
\[
  p^{a_i}x_i=0.
\]
Indeed, the condition that the chain attached to $e_i$ stops at length
$a_i$ says that $p^{a_i}x_i\in p^{a_i+1}T$, so such an adjustment kills
$p^{a_i}x_i$ without changing the class of $x_i$ in $G_0$.

Now define
\[
  \bigoplus_{i=1}^t \mS/p^{a_i}
  \longrightarrow
  T
\]
by sending the standard generator of the $i$-th summand to $x_i$.  By
construction, this map induces an isomorphism on every $p$-graded piece.
Since the $p$-adic filtration on $T$ is finite, the map itself is an
isomorphism.  Therefore
\[
  T\simeq \bigoplus_{i=1}^t \mS/p^{a_i}.
\]
Combining this with the finite free decomposition of $F$ gives the desired
decomposition of $M$.
\end{proof}

\begin{proposition}\label{modfieq}
Assume $er<p-1$.  Then
\[
  \overline{\ModFI}^{r,\varphi}_{/\mS_\infty}
  =
  \overline{\Mod}^{r,\varphi}_{/\mS_\infty}.
\]
In particular, every object $M\in\overline{\Mod}^{r,\varphi}_{/\mS_\infty}$
has underlying $\mS$-module of the form
\[
  M\simeq
  \mS^{\oplus m}
  \oplus
  \bigoplus_{i=1}^n \mS/p^{a_i}.
\]
\end{proposition}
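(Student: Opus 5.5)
The inclusion $\overline{\ModFI}^{r,\varphi}_{/\mS_\infty}\subset\overline{\Mod}^{r,\varphi}_{/\mS_\infty}$ holds by definition, so we must show that every object of $\overline{\Mod}^{r,\varphi}_{/\mS_\infty}$ has the stated underlying module. The strategy is to prove a stronger, extension-stable invariant and then apply Lemma~\ref{pelemcrit}. Let $\mathcal P$ be the class of objects $M\in\widetilde{\Mod}^{\infty,\varphi}_{/\mS}$ with
\[
\gr_p^jM\in\mathcal C_1^{r,\varphi}\quad\text{for all } j\geq 0.
\]
We will show that $\mathcal P$ contains $\overline{\Mod}^{r,\varphi}_{/\mS_1}$. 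We will also show that the relevant extensions stay in $\mathcal P$. Since every object of $\mathcal C_1^{r,\varphi}$ is finite free over $\mS_1$, Lemma~\ref{pelemcrit} then gives the decomposition $\mS^{\oplus m}\oplus\bigoplus_i\mS/p^{a_i}$.

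\emph{Generators.} Suppose $M$ is finite free over $\mS_1$ with $E$-height in $[0,r]$. Then $\gr_p^0M=M\in\Mod^{r,\varphi}_{/\mS_1}$ and $\gr_p^jM=0$ for $j\geq1$, so $M\in\mathcal P$. Suppose instead $M$ is finite free over $\mS$ with height in $[0,r]$. Multiplication by $p^j$ identifies $\gr_p^jM\simeq M/pM$, and $M/pM\in\Mod^{r,\varphi}_{/\mS_1}$, as used in the proof of Lemma~\ref{grpextfree}. Hence $M\in\mathcal P$.

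\emph{Extensions.} The point requiring care is that $\overline{\Mod}^{r,\varphi}_{/\mS_\infty}$ is defined as the closure under \emph{arbitrary} extensions $0\to A\to B\to C\to0$ with $A,C$ in the category. Lemmas~\ref{grpextone} and \ref{grpextfree} only handle the case where the quotient $C$ is a generator. I would therefore first reduce to such one-step extensions. Define $\mathcal E$ as the class of objects admitting a finite filtration $0=B_0\subset B_1\subset\cdots\subset B_s=B$ by subobjects in $\widetilde{\Mod}^{\infty,\varphi}_{/\mS}$ whose graded pieces lie in $\overline{\Mod}^{r,\varphi}_{/\mS_1}$. This class contains the generators. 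It is also extension-stable: given $0\to A\to B\to C\to0$ with filtrations on $A$ and $C$, pull back the filtration of $C$ to $B$ and splice it with that of $A$. The graded pieces are unchanged, and the pulled-back pieces are again Frobenius-stable subobjects. Hence $\mathcal E$ contains, and therefore equals, $\overline{\Mod}^{r,\varphi}_{/\mS_\infty}$.

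Now induct on $s$, the filtration length. In the extension $0\to B_{s-1}\to B_s\to B_s/B_{s-1}\to0$, the subobject $B_{s-1}$ lies in $\mathcal P$ by induction, and the quotient is a generator. If the quotient is $\mS_1$-free, Lemma~\ref{grpextone} applies, and this is where $er<p-1$ enters, through Lemmas~\ref{bkoneab} and \ref{cokcone}. If the quotient is $\mS$-free, Lemma~\ref{grpextfree} applies. Either way $B_s\in\mathcal P$, which completes the argument.

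I expect the main obstacle to be the bookkeeping in this last reduction: checking that the pulled-back filtration pieces are genuine objects of $\widetilde{\Mod}^{\infty,\varphi}_{/\mS}$, in particular that they carry a Frobenius isomorphism after inverting $E$ with some finite height. Lemmas~\ref{grpextone} and \ref{grpextfree} require their sequences to live in that category. I would handle this by noting that a $\varphi$-stable submodule of a Breuil--Kisin module whose quotient is a Breuil--Kisin module inherits the structure, since inverting $E$ is exact and $\varphi^*$ is exact here because $\varphi$ is flat on $\mS$.
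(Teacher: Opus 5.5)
Your proof is correct and follows the paper's route: filter $M$ with graded pieces in $\overline{\Mod}^{r,\varphi}_{/\mS_1}$, propagate the condition $\gr_p^j\in\mathcal C_1^{r,\varphi}$ along the filtration using Lemmas~\ref{grpextone} and~\ref{grpextfree}, and conclude with Lemma~\ref{pelemcrit}. Your check that the class of such filtered objects is extension-stable, by pulling back the filtration of the quotient and verifying that the pieces are again objects of $\widetilde{\Mod}^{\infty,\varphi}_{/\mS}$, makes explicit a step the paper simply asserts ``by the definition of this category as an extension closure.''
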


\begin{proof}
The inclusion
\[
  \overline{\ModFI}^{r,\varphi}_{/\mS_\infty}
  \subset
  \overline{\Mod}^{r,\varphi}_{/\mS_\infty}
\]
is immediate from the definition.  We prove the reverse inclusion.

Let $M$ be an object of $\overline{\Mod}^{r,\varphi}_{/\mS_\infty}$.  By the
definition of this category as an extension closure, $M$ admits a finite
filtration
\[
  0=M_0\subset M_1\subset \cdots \subset M_s=M
\]
in $\widetilde{\Mod}^{\infty,\varphi}_{/\mS}$ such that each graded piece
\[
  H_i:=M_i/M_{i-1}
\]
lies in $\overline{\Mod}^{r,\varphi}_{/\mS_1}$.  Thus each $H_i$ is either
finite free over $\mS_1$ and lies in $\Mod^{r,\varphi}_{/\mS_1}$, or finite
free over $\mS$ and lies in $\widetilde{\Mod}^{r,\varphi}_{/\mS}$.

We prove by induction on $i$ that
\[
  \gr_p^j M_i\in\mathcal C_1^{r,\varphi}
  \qquad
  \text{for all } j\geq 0.
\]
For $i=0$, this is clear.  Suppose the assertion is known for $M_{i-1}$.

If $H_i$ is finite free over $\mS_1$, then $H_i$ lies in
$\Mod^{r,\varphi}_{/\mS_1}$.  Applying Lemma~\ref{grpextone} to
\[
  0\longrightarrow M_{i-1}
  \longrightarrow M_i
  \longrightarrow H_i
  \longrightarrow 0
\]
shows that $\gr_p^jM_i\in\mathcal C_1^{r,\varphi}$ for every $j\geq 0$.

If $H_i$ is finite free over $\mS$, then $H_i$ lies in
$\widetilde{\Mod}^{r,\varphi}_{/\mS}$.  Applying Lemma~\ref{grpextfree} to the same
exact sequence gives again
$\gr_p^jM_i\in\mathcal C_1^{r,\varphi}$ for every $j\geq 0$.

This proves the induction.  Therefore every $\gr_p^jM$ belongs to
$\mathcal C_1^{r,\varphi}$, and in particular is finite free over
$\mS_1=k[[z]]$.

By Lemma~\ref{pelemcrit}, the underlying $\mS$-module of $M$ is of the form
\[
  M\simeq
  \mS^{\oplus m}
  \oplus
  \bigoplus_{i=1}^n \mS/p^{a_i}.
\]
Hence $M$ belongs to
$\overline{\ModFI}^{r,\varphi}_{/\mS_\infty}$, proving the reverse
inclusion.
\end{proof}

\section{Proof of the main theorems}
Throughout this section, we retain the notation of Section~3. We regard $\sO_K$ as an $\bS [z]$-algebra via $z\mapsto \pi$. For a smooth scheme $X$ over $\sO_K$, let $\sX$ denote its $p$-adic formal completion. Set
\begin{align*}
X_{\sO_{\sC}}
&:= X \times_{\Spec(\sO_K)} \Spec(\sO_{\sC}), \\
\sX_{\sO_{\sC}}
&:= \sX \widehat{\times}_{\Spf(\sO_K)} \Spf(\sO_{\sC}).
\end{align*}
Then $\sX_{\sO_{\sC}}$ is canonically identified with the $p$-adic formal completion of $X_{\sO_{\sC}}$.

\begin{proposition}\label{tptcformal}
Assume that the invariants of the formal schemes below are defined via their quasisyntomic sites. Then there are canonical equivalences
\[
\TP(\sX_{\sO_{\sC}};\Z_p)
\simeq
\TP(X_{\sO_{\sC}};\Z_p),
\qquad
\TCn(\sX_{\sO_{\sC}};\Z_p)
\simeq
\TCn(X_{\sO_{\sC}};\Z_p),
\]
and, respectively,
\[
\TP(\sX/\bS[z];\Z_p)
\simeq
\TP(X/\bS[z];\Z_p),
\qquad
\TCn(\sX/\bS[z];\Z_p)
\simeq
\TCn(X/\bS[z];\Z_p).
\]
\end{proposition}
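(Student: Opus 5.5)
The comparison is a descent argument. Both sides satisfy quasisyntomic descent, and on the relevant affine pieces the invariants agree because they are insensitive to $p$-adic completion. The plan is to reduce to affines, compare the $p$-completed invariants there, and then glue.

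We begin with the case of $\TP$ and $\TCn$ over $\bS$ for $X_{\sO_\sC}$; the relative case over $\bS[z]$ is identical. By definition, $\TP(\sX_{\sO_\sC};\Z_p)$ is the value on $\sX_{\sO_\sC}$ of the right Kan extension (unfolding) of the functor $R\mapsto \TP(R;\Z_p)$ from the basis of $p$-complete affine (quasisyntomic) opens. By \cite{BMS19}, $R\mapsto\TP(R;\Z_p)$ and $R\mapsto\TCn(R;\Z_p)$ are sheaves for the quasisyntomic, in particular the Zariski, topology on $p$-complete rings. On the algebraic side, $\TP(X_{\sO_\sC};\Z_p)$ is defined via $\perf(X_{\sO_\sC})$. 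Since $\THH$, and hence $\TCn$ and $\TP$, are localizing invariants, they satisfy Zariski descent on quasi-compact quasi-separated schemes (Thomason--Trobaugh). We therefore cover $X$ by finitely many affine opens $\Spec A_\alpha$, with affine intersections since $X$ is separated. This reduces both sides to finite limits over the Čech nerve.

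It then remains to compare, for a smooth $\sO_K$-algebra $A$ with $p$-completion $\widehat A$, the map $\TP(A_{\sO_\sC};\Z_p)\to \TP(\widehat{A}_{\sO_\sC};\Z_p)$, where the target is the $p$-completed base change. The key input is that $\THH(-;\Z_p)$ only depends on the $p$-completion. Indeed, $\THH(B)/p\simeq \THH(B)\otimes_{\THH(\Z)}\THH(\Z)/p$, and $\THH(B)\otimes_{\bS}\bS/p$ depends only on $B/p$ up to the standard argument: $\THH(B;\Z_p)\to\THH(\widehat B;\Z_p)$ is an equivalence because $B/p^n\simeq \widehat B/p^n$ and $\THH(-)/p$ commutes with the derived reduction, as in \cite[\S2]{BMS19}. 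We then pass to homotopy $S^1$-fixed points and to the Tate construction. This step needs care with $p$-completions: $\TCn(-;\Z_p)=\THH(-;\Z_p)^{hS^1}$ is a limit and so is preserved by equivalences. For $\TP(-;\Z_p)$, the $p$-completed Tate construction is determined by the $p$-complete $S^1$-spectrum $\THH(-;\Z_p)$. Hence the equivalence on $\THH(-;\Z_p)$ as $S^1$-equivariant spectra yields the equivalences for $\TCn$ and $\TP$. For the relative versions over $\bS[z]$, the same argument applies with $\THH(-/\bS[z])=\THH(-)\otimes_{\THH(\bS[z])}\bS[z]$. Base change along $\bS[z]$ preserves the mod-$p$ equivalence, so the comparison goes through.

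Finally, we glue. The affine equivalences are compatible with restriction to smaller affines by naturality, so they assemble to an equivalence of the Čech limits, giving the four equivalences stated. We also need that the quasisyntomic definition on $\sX_{\sO_\sC}$ agrees with the Zariski-Čech limit. This follows from the sheaf property in \cite{BMS19}, since a Zariski cover of $\sX_{\sO_\sC}$ by the formal completions of the $\Spec A_\alpha$ is a quasisyntomic cover. The step I expect to be the main obstacle is this identification of the Tate construction after $p$-completion, together with checking that $\sO_\sC$-base change (non-noetherian) does not break the mod-$p$ comparison. I would handle both by working with $\THH(-;\Z_p)$ reduced mod $p$ throughout, where all objects are bounded below and the Tate construction commutes with the relevant finite limits.
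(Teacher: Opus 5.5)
Your proposal is correct and follows essentially the same route as the paper. Zariski descent on the scheme side and quasisyntomic descent on the formal side reduce everything to affines; there an $S^1$-equivariant equivalence $\THH(A;\Z_p)\simeq\THH(\widehat A;\Z_p)$ is passed to $(-)^{hS^1}$ and $(-)^{tS^1}$ and then reassembled.

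The differences are minor. You use a finite \v{C}ech nerve (using separatedness) where the paper uses an affine hypercover. You also get the affine equivalence from mod-$p$ invariance of $\THH$ rather than from the continuity result the paper cites: a map inducing an equivalence after $-\otimes\bS/p$ does so on every term of the cyclic bar construction, hence on $\THH(-)\otimes\bS/p$. This needs no noetherian hypothesis, so it applies directly to $A\otimes_{\sO_K}\sO_\sC$. Your phrase ``depends only on $B/p$'' should be read in that precise sense.
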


\begin{proof} We prove the assertion for $\sX_{\sO_{\sC}}$. The case of $\sX$ over $\bS[z]$ is entirely analogous. Choose an affine Zariski hypercover $U_\bullet \to X_{\sO_{\sC}}$ such that each
$U_n$ is a disjoint union of affine opens, and let $\mf{U}_\bullet$ be the termwise
$p$-adic formal completion of $U_\bullet$. Since $\TP(-;\Z_p)$ and
$\TCn(-;\Z_p)$ satisfy Zariski descent on schemes, while on the formal side
they are defined as global sections of the corresponding quasisyntomic sheaves, it is
enough to prove the claim for each affine component
$U = \Spec(A)$ occurring in some $U_n$, with formal completion
$\mf{U} = \Spf(\widehat A)$.

For such an affine $U$, by definition of the quasisyntomic site of an affine formal scheme,
we have
\[
\TP(\mf{U};\Z_p) \simeq \TP(\widehat A;\Z_p),
\qquad
\TCn(\mf{U};\Z_p) \simeq \TCn(\widehat A;\Z_p).
\]
On the other hand, applying \cite[Proposition 4.2]{AMNN22} first to
$\Spec(A)$ and then to $\Spec(\widehat A)$, yields equivalences
\[
\operatorname{THH}(A;\Z_p)
\xrightarrow{\sim}
\varprojlim_n \operatorname{THH}(A/p^nA;\Z_p)
\xleftarrow{\sim}
\operatorname{THH}(\widehat A;\Z_p),
\]
because $\widehat A/p^n\widehat A \cong A/p^nA$ for all $n \ge 1$. Hence
\[
\operatorname{THH}(A;\Z_p)
\simeq
\operatorname{THH}(\widehat A;\Z_p)
\]
as $S^1$-equivariant spectra.

Now
\[
\TCn(A;\Z_p)=\operatorname{THH}(A;\Z_p)^{hS^1},
\qquad
\TP(A;\Z_p)=\operatorname{THH}(A;\Z_p)^{tS^1},
\]
and similarly for $\widehat A$. Therefore the preceding equivalence implies
\[
\TCn(A;\Z_p) \simeq \TCn(\widehat A;\Z_p),
\qquad
\TP(A;\Z_p) \simeq \TP(\widehat A;\Z_p).
\]
Combining this with the identification of the formal side, we obtain
\[
\TCn(U;\Z_p) \simeq \TCn(\mf{U};\Z_p),
\qquad
\TP(U;\Z_p) \simeq \TP(\mf{U};\Z_p).
\]

These equivalences are functorial in $U$, hence assemble into equivalences of cosimplicial spectra
\[
\TCn(U_\bullet;\Z_p) \simeq \TCn(\mf{U}_\bullet;\Z_p),
\qquad
\TP(U_\bullet;\Z_p) \simeq \TP(\mf{U}_\bullet;\Z_p).
\]
Taking totalizations and using fpqc-descent~\cite[Corollary 3.4]{BMS19}, we get
\[
\TCn(X_{\sO_{\sC}};\Z_p)
\simeq
\operatorname{Tot}\bigl(\TCn(U_\bullet;\Z_p)\bigr)
\simeq
\operatorname{Tot}\bigl(\TCn(\mf{U}_\bullet;\Z_p)\bigr)
\simeq
\TCn(\sX_{\sO_{\sC}};\Z_p),
\]
and similarly
\[
  \TP(X_{\sO_{\sC}};\Z_p)
\simeq
\operatorname{Tot}\bigl(\TP(U_\bullet;\Z_p)\bigr)
\simeq
\operatorname{Tot}\bigl(\TP(\mf{U}_\bullet;\Z_p)\bigr)
\simeq
\TP(\sX_{\sO_{\sC}};\Z_p).
\]
Taking inverses of these equivalences gives the displayed maps
\[
\TP(\sX_{\sO_{\sC}};\Z_p)
\xrightarrow{\sim}
\TP(X_{\sO_{\sC}};\Z_p),
\]
and
\[
\TCn(\sX_{\sO_{\sC}};\Z_p)
\xrightarrow{\sim}
\TCn(X_{\sO_{\sC}};\Z_p).
\]
This proves the claim.
\end{proof}
The same argument gives the following result.
\begin{proposition}\label{hpformal}
Assume that $\HP(\sX/\sO_K;\Z_p)$ is defined via the quasisyntomic
site of the formal scheme $\sX$. Then there are canonical equivalences
\[
 \HP(\sX/\sO_K;\Z_p)
\xrightarrow{\simeq}
\HP(X/\sO_K;\Z_p).
\]
\end{proposition}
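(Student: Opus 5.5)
We follow the proof of Proposition~\ref{tptcformal} essentially verbatim, replacing the $S^1$-equivariant spectrum $\THH(-;\Z_p)$ by Hochschild homology relative to $\sO_K$. Choose an affine Zariski hypercover $U_\bullet\to X$ in which each $U_n$ is a disjoint union of affine opens $\Spec(A)$ with $A$ smooth over $\sO_K$. Let $\mf{U}_\bullet$ be its termwise $p$-adic completion, which is a hypercover of $\sX$ by affine formal schemes $\Spf(\widehat A)$. On the scheme side, $\HP(-/\sO_K;\Z_p)$ satisfies Zariski descent, since $\HH(-/\sO_K)$ does and $(-)^{tS^1}$ followed by $p$-completion preserves finite limits. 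On the formal side, $\HP(\sX/\sO_K;\Z_p)$ is defined as global sections of the quasisyntomic sheaf, whose value on $\Spf(\widehat A)$ is $\HP(\widehat A/\sO_K;\Z_p)$. The problem therefore reduces to producing a natural equivalence $\HP(A/\sO_K;\Z_p)\simeq\HP(\widehat A/\sO_K;\Z_p)$ for each affine piece.

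For that, we compare the $S^1$-equivariant spectra $\HH(A/\sO_K;\Z_p)$ and $\HH(\widehat A/\sO_K;\Z_p)$. Since $\HH(-/\sO_K)$ is computed by the cyclic bar construction $A^{\otimes_{\sO_K}(\bullet+1)}$, and derived $p$-completion commutes with the geometric realization after a second completion, we get
\[
\HH(A/\sO_K;\Z_p)\simeq \lim_n \HH(A/\sO_K)\otimes^{\L}_{\Z}\Z/p^n \simeq \lim_n \HH\bigl((A/p^n)/(\sO_K/p^n)\bigr).
\]
This is the analogue of \cite[Proposition 4.2]{AMNN22}, and the same identification holds for $\widehat A$. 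Since $\widehat A/p^n\cong A/p^n$, and since $A$ and $\widehat A$ are flat over $\sO_K$ so that the derived reductions are discrete, the two towers agree. This gives an $S^1$-equivariant equivalence of the $p$-completions.

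The step needing the most care is the passage from this equivalence to $\HP(-;\Z_p)=(\HH(-;\Z_p)^{tS^1})^\wedge_p$. Here we use that for a $p$-complete $S^1$-spectrum $Y$, the spectrum $(Y^{tS^1})^\wedge_p$ depends only on $Y$, together with the standard fact that $(\HH(A)^{tS^1})^\wedge_p\simeq(\HH(A)^\wedge_p)^{tS^1,\wedge}_p$. The latter holds because $(-)^{hS^1}$ commutes with limits, and the homotopy orbits of a $p$-complete bounded-below spectrum are again $p$-complete after completion. Both $\HH(A/\sO_K)$ and $\HH(\widehat A/\sO_K)$ are connective, so this fact applies to each.

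Since these equivalences are functorial in the affine $U$, they assemble into an equivalence of cosimplicial spectra $\HP(U_\bullet/\sO_K;\Z_p)\simeq\HP(\mf{U}_\bullet/\sO_K;\Z_p)$. Taking totalizations, and using Zariski descent on the scheme side together with quasisyntomic (hence Zariski) descent on the formal side \cite[Corollary 3.4]{BMS19}, yields the required equivalence $\HP(\sX/\sO_K;\Z_p)\xrightarrow{\simeq}\HP(X/\sO_K;\Z_p)$.
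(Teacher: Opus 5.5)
Your proposal is correct and follows essentially the same route as the paper's proof: an affine Zariski hypercover, termwise $p$-adic completion, $p$-adic continuity of $\HH(-/\sO_K)$ via $\widehat A/p^n\cong A/p^n$, passage to Tate fixed points, and descent to identify the totalizations. You supply more detail than the paper does, though the Tate step is most simply justified by exactness of $(-)^{tS^1}$: it commutes with $-/p$, so it carries the mod-$p$ equivalence $\HH(A/\sO_K)\to\HH(\widehat A/\sO_K)$ to a mod-$p$ equivalence, and the $p$-completions therefore agree.
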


\begin{proof}
The proof is identical to that of Proposition~\ref{tptcformal}: one chooses an affine Zariski hypercover of $X$, passes to the termwise $p$-adic formal completions, uses $p$-adic continuity for Hochschild homology, and then takes Tate fixed points after passing to periodic cyclic homology. Descent identifies the resulting totalizations, giving the displayed equivalence.
\end{proof}

\subsection{Preliminaries on spectral sequences II}

In this subsection, we let $R$ be a commutative ring and discuss several preliminary facts concerning spectral sequences that will be used later.

Let $M$ be a spectrum equipped with a decreasing filtration
\begin{equation*}
  \cdots \longrightarrow \fil^{n+1}M
  \longrightarrow \fil^n M
  \longrightarrow \fil^{n-1}M
  \longrightarrow \cdots
  \longrightarrow M.
\end{equation*}
Set
\begin{equation*}
  \gr^n M :=
  \operatorname{cofib}\bigl(\fil^{n+1}M\to \fil^n M\bigr).
\end{equation*}
We assume throughout that the filtration is complete and exhaustive. We also assume that the induced spectral sequence
\begin{equation*}
  E_1^{n,*}=\pi_*(\gr^n M)\Longrightarrow \pi_*(M)
\end{equation*}
is strongly convergent and locally finite in each homotopy degree. More explicitly, for every $i$, only finitely many $n$ contribute to the filtration on $\pi_i(M)$. Finally, we assume that all groups $\pi_i(M)$, $\pi_i(\fil^n M)$, and $\pi_i(\gr^n M)$ are finitely generated $R$-modules, and that all maps appearing in the exact couple are $R$-linear. Let
\begin{equation*}
  F^n\pi_i(M):=
  \operatorname{im}\bigl(\pi_i(\fil^nM)\to \pi_i(M)\bigr)
\end{equation*}
be the induced filtration on $\pi_i(M)$, and set
\[
  \gr_F^n(\pi_i M):=F^n \pi_i M/F^{n+1} \pi_i M.
\]

\begin{defn}
Let $R \to A$ be a morphism of commutative rings.
\begin{itemize}
  \item[(1)] We say that the filtered spectrum $M$ is \emph{degenerate after tensoring with $A$} if for all $i,n\in\Z$, the natural map
  \[
    \pi_i(\fil^n M) \otimes_R A \to \pi_i(M)\otimes_R A
  \]
  is injective.

  \item[(2)] We say that the filtered spectrum $M$ is \emph{split degenerate after tensoring with $A$} if for all $i,n\in\Z$, the natural map
  \[
    \pi_i(\fil^n M) \otimes_R A \to \pi_i(M)\otimes_R A
  \]
  is split injective as a map of $A$-modules.
\end{itemize}
\end{defn}

The following proposition is proved by the same argument as Proposition~\ref{sscrit}(2).

\begin{proposition}\label{sscritflat}
Let $R \to A$ be a flat morphism of commutative rings such that $A$ is a DVR with fraction field $K$. Assume that the spectral sequence associated with $M$ degenerates after tensoring with $K$. Then the filtered spectrum $M$ is split degenerate after tensoring with $A$ if and only if, for every $i\in\Z$, there is an abstract isomorphism of $A$-modules
\begin{equation*}
  \bigl( \pi_i(M)\otimes_R A \bigr)_{\tors}
  \simeq
  \bigoplus_n \bigl(\pi_i(\gr^nM)\otimes_R A \bigr)_{\tors}.
\end{equation*}
\end{proposition}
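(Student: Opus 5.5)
The plan is to reduce to Proposition~\ref{sscrit}(2), applied to the filtered spectrum obtained by base change, after checking that flatness of $R\to A$ lets us compute everything on the level of homotopy groups. Write $M_A$ for the data $\pi_i(\fil^nM)\otimes_R A$, $\pi_i(M)\otimes_RA$, $\pi_i(\gr^nM)\otimes_RA$. Since $A$ is flat over $R$, tensoring the long exact sequences of the exact couple with $A$ keeps them exact. So the groups $\pi_*(\gr^nM)\otimes_RA$, with the $A$-linear extensions of the maps of the exact couple, form again an exact couple, and its spectral sequence is $E_r\otimes_RA$ at every page. Strong convergence, local finiteness in each degree, and finite generation (now over $A$) are preserved. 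Moreover, the induced filtration satisfies $F^n\pi_i(M)\otimes_RA=\im(\pi_i(\fil^nM)\otimes_RA\to\pi_i(M)\otimes_RA)$, because images commute with flat base change. Likewise $E_\infty\otimes_RA\simeq\gr_F^n(\pi_iM)\otimes_RA$. If one prefers to work with genuine spectra, one can realize this as $\fil^\ast M\otimes_{\bS}$ of a suitable flat module spectrum. However, everything in the proof of Proposition~\ref{sscrit} only uses the exact couple, so the algebraic exact couple suffices.

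Next I rerun the proof of Proposition~\ref{sscrit} verbatim with $R$ replaced by the DVR $A$ and the exact couple replaced by its base change. The hypothesis that the spectral sequence degenerates after tensoring with $K$ means all differentials of $E_r\otimes_RA$ become zero after $\otimes_AK$, so they are torsion-valued. The length inequalities
\[
\len_A\bigl((\pi_iM\otimes_RA)_{\tors}\bigr)\le\sum_n\len_A\bigl((\gr_F^n\pi_iM\otimes_RA)_{\tors}\bigr)\le\sum_n\len_A\bigl((\pi_i\gr^nM\otimes_RA)_{\tors}\bigr)
\]
then hold. Equality forces vanishing of all differentials, which is degeneracy after tensoring with $A$ in the sense of the definition: injectivity of $\pi_i(\fil^nM)\otimes_RA\to\pi_i(M)\otimes_RA$. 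Injectivity follows by induction on $n$ using local finiteness.

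For the forward direction, a splitting of each injection gives a split filtration on the torsion parts with graded pieces $(\pi_i\gr^nM\otimes_RA)_{\tors}$, hence the abstract isomorphism. Conversely, the abstract isomorphism gives the length equality. Then Li's \cite[Lemma 2.6]{L22} gives saturatedness, so the torsion-free quotients split since they are free over $A$. Finally \cite[Corollary 2.11]{L22} splits the torsion parts, and combining the two splittings gives split injectivity.

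The main obstacle is the first step: making sure that after flat base change the graded pieces of the induced filtration are exactly $\pi_i(\gr^nM)\otimes_RA$ once degeneracy holds. In other words, we need the identification $\gr_F^n\simeq E_1$ to be compatible with $\otimes_RA$. I would handle this by noting that flatness makes $E_r\otimes_RA$ the spectral sequence of the base-changed couple, and then applying the argument there directly.
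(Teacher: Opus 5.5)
Your proposal is correct and is essentially the paper's argument. The paper also applies the proof of Proposition~\ref{sscrit}(2) to the filtered $A$-modules $\pi_i(M)\otimes_R A$, filtered by the images of $\pi_i(\fil^nM)\otimes_RA$, with the same length comparison and Li's Lemma~2.6 and Corollary~2.11. Your base change of the exact couple along the flat map $R\to A$ makes the spectral sequence after tensoring literally $E_r\otimes_RA$, and images commute with $-\otimes_RA$; this simply makes explicit the ``evident identification'' of the graded pieces with $\pi_i(\gr^nM)\otimes_RA$ that the paper invokes without comment.
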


\begin{proof}
The proof is identical to that of Proposition~\ref{sscrit}(2), applied to the filtered $A$-module
\begin{equation*}
  N_i:=\pi_i(M)\otimes_R A
\end{equation*}
with filtration
\begin{equation*}
  F_A^nN_i:=
  \operatorname{im}\bigl(\pi_i(\fil^n M)\otimes_R A
  \to \pi_i(M)\otimes_R A\bigr).
\end{equation*}
Since the associated spectral sequence degenerates after tensoring with $K$, the same length argument as in Proposition~\ref{sscrit}(2) shows that the filtration $F_A^* N_i$ is split if and only if
\begin{equation*}
  (N_i)_{\tors}
  \simeq
  \bigoplus_n \bigl(\gr_{F_A}^n N_i\bigr)_{\tors}.
\end{equation*}
By construction, one has
\begin{equation*}
  \gr_{F_A}^n N_i
  \cong
  \operatorname{im}\bigl(\pi_i(\fil^n M)\otimes_R A
  \to \pi_i(M)\otimes_R A\bigr)
  \Big/
  \operatorname{im}\bigl(\pi_i(\fil^{n+1} M)\otimes_R A
  \to \pi_i(M)\otimes_R A\bigr),
\end{equation*}
so, under the evident identification of the graded pieces with
$\pi_i(\gr^n M)\otimes_R A$, the claim follows.
\end{proof}

We also prove the following lemma.

\begin{lemma}\label{degdetect}
Let $M$ be a filtered spectrum as above, and let $R \to A$ be a morphism of commutative rings such that $A$ is a DVR with fraction field $K$. Assume that for every $n,i\in \Z$, the natural map
\begin{equation*}
  \pi_i(\gr^n M)\longrightarrow \pi_i(\gr^n M)\otimes_R A
\end{equation*}
is injective, and that $M$ is degenerate after tensoring with $A$. Then the original spectral sequence is degenerate.
\end{lemma}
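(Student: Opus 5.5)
We must show that every differential in the spectral sequence of $M$ vanishes. Equivalently, by the definition of degeneration, each map $\pi_i(\fil^nM)\to\pi_i(M)$ must be injective. Our plan is to argue differential by differential, using the injectivity hypothesis to transport vanishing from the $A$-linear spectral sequence back to the original one.

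First we check what the hypothesis gives after base change. Degeneration after tensoring with $A$ says that $\pi_i(\fil^nM)\otimes_RA\to\pi_i(M)\otimes_RA$ is injective for all $i,n$. Consider the long exact sequence
\[
\pi_{i}(\fil^{n+1}M)\to\pi_i(\fil^nM)\to\pi_i(\gr^nM)\xrightarrow{\partial}\pi_{i-1}(\fil^{n+1}M).
\]
The first differential $d_1$ out of $\pi_i(\gr^nM)$ is the composite $\pi_i(\gr^nM)\xrightarrow{\partial}\pi_{i-1}(\fil^{n+1}M)\to\pi_{i-1}(\gr^{n+1}M)$. More generally, a class $x\in\pi_i(\gr^nM)$ survives to $E_\infty$, i.e.\ all differentials on it vanish, exactly when $\partial x$ maps to zero in $\pi_{i-1}(M)$. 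Injectivity of $\pi_{i-1}(\fil^{n+1}M)\to\pi_{i-1}(M)$ for all indices is therefore equivalent to $\partial=0$ for all $n,i$. Degeneration is thus the vanishing of every connecting map $\partial$.

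Rather than comparing spectral sequences over $A$, which is delicate since $A$ need not be flat, we can work directly with the composite $\pi_i(\fil^nM)\to\pi_i(M)$ and aim to show $\partial_{n,i}=0$ for all $n,i$. The locally finite filtration lets us induct downward on $n$. For $n$ large (depending on $i$), $\pi_i(\fil^nM)\to\pi_i(M)$ is an isomorphism onto $0$, or both sides vanish, by strong convergence and local finiteness. Now assume injectivity holds for $\fil^{n+1}$ in all degrees. Then $\pi_i(\fil^{n+1}M)\to\pi_i(\fil^nM)$ is injective, because its composite with the map to $\pi_i(M)$ is injective. Hence $\partial=0$ into degree $i$, and we get a short exact sequence
\[
0\to\pi_i(\fil^{n+1}M)\to\pi_i(\fil^nM)\to\pi_i(\gr^nM)\to0.
\]
To conclude injectivity for $\fil^n$, take $y\in\pi_i(\fil^nM)$ mapping to $0$ in $\pi_i(M)$. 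Its image $\bar y\in\pi_i(\gr^nM)$ then lies in the kernel of the map to $\pi_i(M)/F^{n+1}$. Tensoring with $A$ and using degeneration over $A$, the image of $y\otimes1$ in $\pi_i(M)\otimes A$ is zero, so $y\otimes1=0$ in $\pi_i(\fil^nM)\otimes A$, so $\bar y\otimes1=0$ in $\pi_i(\gr^nM)\otimes A$. The injectivity hypothesis then gives $\bar y=0$. So $y$ comes from $\pi_i(\fil^{n+1}M)$, and by the inductive hypothesis $y=0$.

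The main obstacle is the base step and the exhaustion bookkeeping. The induction must start at an $n$ where the statement is known, which is why the local finiteness and strong convergence hypotheses are needed. It is also important to check that the inductive hypothesis for $\fil^{n+1}$ is available in all homotopy degrees $i$ simultaneously. If local finiteness only bounds the contributing $n$ degree by degree, we would run the induction separately for each fixed $i$, using only degrees $i$ and $i-1$. The argument requires no flatness of $R\to A$, only the stated injectivity on graded pieces.
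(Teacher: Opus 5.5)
Your proof is correct, but it is organized differently from the paper's.

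\textbf{The paper's route.} The paper inducts on the page $r$. Assuming $d_s=0$ for $s<r$ (so $E_r=E_1$), it asserts $d_r\otimes_RA=0$ by appeal to ``the spectral sequence after tensoring with $A$''. It then uses injectivity of $E_1\to E_1\otimes_RA$ on the target to get $d_r=0$. Finally it reads off degeneration from the vanishing of all differentials.

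\textbf{Your route.} You fix $i$, induct downward on $n$, and prove the defining injectivity of $\pi_i(\fil^nM)\to\pi_i(M)$ directly. A kernel element $y$ has $y\otimes1=0$ by degeneration over $A$. Hence its image $\bar y\in\pi_i(\gr^nM)$ dies in $\pi_i(\gr^nM)\otimes_RA$, so $\bar y=0$. Then $y$ lifts to $\pi_i(\fil^{n+1}M)$, where the inductive hypothesis kills it.

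\textbf{What each buys.} Your chase never treats base-changed pages as a spectral sequence, which matters since the lemma does not assume $R\to A$ flat. It also lands exactly on the paper's definition of degeneration, rather than on vanishing of differentials. In fact, justifying the paper's step $d_r\otimes_RA=0$ amounts to your chase applied to a lift of $\partial x$ in $\pi_{i-1}(\fil^{n+r}M)$. The paper's page-by-page version is more conventional and harmless in its application, where $\mS\to W(\sC^\flat)$ is flat.

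\textbf{Two minor points.}

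First, your base case needs $\pi_i(\fil^nM)=0$ for $n\gg0$. This follows if $\pi_i(\gr^nM)$ and $\pi_{i+1}(\gr^nM)$ vanish for $n\gg0$: the transition maps are then eventually isomorphisms, and completeness with the Milnor sequence forces the groups to be zero. That holds in all applications. It does not follow from finiteness of $F^\bullet\pi_i(M)$ alone, which only controls images. The paper's passage from vanishing differentials to injectivity uses the same finiteness implicitly.

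Second, your sentence claiming that $x$ survives exactly when $\partial x$ maps to zero in $\pi_{i-1}(M)$ is false as written, since that always holds. The correct condition is $\partial x=0$. Neither that sentence nor the short exact sequence is actually used: your chase needs only exactness at $\pi_i(\fil^nM)$ and the degree-$i$ hypothesis for $\fil^{n+1}$.
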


\begin{proof}
Let $E_r^{a,b}$ be the associated spectral sequence. Since
\begin{equation*}
  E_1^{a,b}=\pi_{a+b}(\gr^{-a}M),
\end{equation*}
the map
\begin{equation*}
  E_1^{a,b}\to E_1^{a,b}\otimes_R A
\end{equation*}
is injective for every $(a,b)$ by assumption.

We prove by induction on $r\geq 1$ that $d_r=0$. Assume that $d_s=0$ for all $s<r$. Then there are canonical identifications
\begin{equation*}
  E_r^{a,b}=E_1^{a,b}
\end{equation*}
for all $(a,b)$. Since $M$ is degenerate after tensoring with $A$, the spectral sequence after tensoring with $A$ is degenerate, and hence
\begin{equation*}
  d_r\otimes_R A=0.
\end{equation*}
Therefore $d_r=0$, because its target
\begin{equation*}
  E_r^{a+r,b-r+1}=E_1^{a+r,b-r+1}
\end{equation*}
injects into
\begin{equation*}
  E_1^{a+r,b-r+1}\otimes_R A.
\end{equation*}
Thus all differentials vanish, and the spectral sequence is degenerate.
\end{proof}
\subsection{\texorpdfstring{Degeneration of the Breuil--Kisin-to-$\TCn$ spectral sequence}{Degeneration of the Breuil--Kisin-to-TC spectral sequence}}

We recall that
\[
  \pi_* \TCn(\sO_K/\bS[z];\Z_p)\simeq \mS[u,v]/(uv-E),
\]
where $\deg u=2$ and $\deg v=-2$, and that
\[
  \pi_* \TP(\sO_K/\bS[z];\Z_p)\simeq \mS[\sigma^{\pm 1}],
\]
where $\deg \sigma=2$. Moreover, $\can_{\sO_K}$ is $\sO_K$-linear and sends $u$ to $E\sigma$ and $v$ to $\sigma^{-1}$, while $\varphi_{\sO_K}^{h\mathbb{T}}$ is $\varphi$-semilinear and sends $u$ to $\sigma$ and $v$ to $\varphi(E)\sigma^{-1}$; see \cite[Proposition 11.10]{BMS19}.

Let $X$ be a smooth variety over $\sO_K$. We denote by the two filtered objects below the global sections of the quasisyntomic sheafifications of the double-speed Postnikov filtrations on $\TCn(-;\Z_p)$ and $\TP(-;\Z_p)$:
\begin{align*}
\fil_\BMS^*\TCn(\sX/\bS[z];\Z_p)
&:= R\Gamma_{\syn}\bigl(\sX,\fil_\BMS^* \TCn(-/\bS[z];\Z_p)\bigr),\\
\fil_\BMS^*\TP(\sX/\bS[z];\Z_p)
&:= R\Gamma_{\syn}\bigl(\sX,\fil_\BMS^* \TP(-/\bS[z];\Z_p)\bigr).
\end{align*}
These define complete, exhaustive, decreasing, multiplicative $\Z$-indexed filtrations; see \cite[Theorem 1.12]{BMS19}. We define
\begin{align*}
  H^m_{\mS}(\sX)\{n\}
  &:= \pi_{2n-m} R\Gamma_{\syn}\bigl(\sX,\gr_\BMS^{n}\TCn(-/\bS[z];\Z_p)[1/u]\bigr)\\
  &\simeq \pi_{2n-m}\gr_\BMS^{n}\TCn(\sX/\bS[z];\Z_p)[1/u],
\end{align*}
where $u\in \pi_* \TCn(\sO_K/\bS[z];\Z_p)$. We thus obtain a spectral sequence
\begin{equation}\label{bkss}
  E_2^{i,j}=H_{\mS}^{i-j}(\sX)\{j\} \Longrightarrow \pi_{-i-j}\bigl(\TCn(\sX/\bS[z];\Z_p)[1/u]\bigr).
\end{equation}

For a quasisyntomic ring $A$ over $\sO_K$, the cyclotomic Frobenius
\[
  \varphi_A^{h\mathbb{T}}:\TCn(A/\bS[z];\Z_p)\to \TP(A/\bS[z];\Z_p)
\]
sends $u$ to an invertible element. Hence it induces a map
\[
  \TCn(A/\bS[z];\Z_p)[1/u]\overset{\varphi_A^{h\mathbb{T}}}{\longrightarrow}\TP(A/\bS[z];\Z_p).
\]
Passing to filtered objects, we obtain a morphism of filtered spectra
\[
  \fil_{\BMS}^*\TCn(\sX/\bS[z];\Z_p)[1/u]
  \overset{\fil^*\varphi_{\sX}^{h\mathbb{T}}}{\longrightarrow}
  \fil_{\BMS}^*\TP(\sX/\bS[z];\Z_p),
\]
and hence a morphism on associated graded pieces
\[
  \gr_{\BMS}^*\TCn(\sX/\bS[z];\Z_p)[1/u]
  \overset{\gr^*\varphi_{\sX}^{h\mathbb{T}}}{\longrightarrow}
  \gr_{\BMS}^*\TP(\sX/\bS[z];\Z_p).
\]

On the other hand, the canonical map
\[
  \can_A:\TCn(A/\bS[z];\Z_p)\to \TP(A/\bS[z];\Z_p)
\]
also induces a morphism of filtered spectra
\begin{equation}\label{filcan}
  \fil_{\BMS}^*\TCn(\sX/\bS[z];\Z_p)
  \overset{\fil^*\can_{\sX}}{\longrightarrow}
  \fil_{\BMS}^*\TP(\sX/\bS[z];\Z_p),
\end{equation}
and hence a morphism on associated graded pieces
\begin{equation}\label{grcan}
  \gr_{\BMS}^*\TCn(\sX/\bS[z];\Z_p)
  \overset{\gr^*\can_{\sX}}{\longrightarrow}
  \gr_{\BMS}^*\TP(\sX/\bS[z];\Z_p).
\end{equation}

\begin{lemma}
After inverting $E$, the morphisms \eqref{filcan} and \eqref{grcan} are equivalences.
\end{lemma}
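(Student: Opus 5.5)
We will reduce the statement to a computation on quasiregular semiperfectoid rings, where it becomes an explicit statement about the graded pieces of $\pi_*$. Since $\fil^*_\BMS\TCn(\sX/\bS[z];\Z_p)$ and $\fil^*_\BMS\TP(\sX/\bS[z];\Z_p)$ are global sections of quasisyntomic sheaves, the plan is to compare the maps sheaf-wise. Both sides are modules over $\TCn(\sO_K/\bS[z];\Z_p)$, and \eqref{filcan} is linear over it. So ``inverting $E$'' makes sense as a colimit along multiplication by $E\in\mS=\pi_0\TCn(\sO_K/\bS[z];\Z_p)$, which is filtration-preserving of degree $0$.

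The first step is to reduce to the local case. Inverting $E$ is a filtered colimit, and $\sX$ is quasicompact and quasiseparated (it is the formal completion of a finite-type scheme). Hence global sections over a finite hypercover by qrsp affinoids commute with this colimit up to the usual completeness caveat. Concretely, I would compare the maps on each $\gr^n$ first. The graded pieces $\gr^n_\BMS$ on a qrsp ring $S$ over $\sO_K$ are concentrated in a single degree $2n$. Their cohomology on $\sX$ is computed by a bounded complex built from relative prismatic cohomology. Since $\sX$ has finite cohomological dimension, only finitely many terms are involved, and these commute with $[1/E]$. It therefore suffices to show that $\gr^n\can_S[1/E]$ is an equivalence for every qrsp $S$ and every $n$.

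For qrsp $S$, by \cite{BMS19} (relative version over $\bS[z]$), $\pi_{2n}\TCn(S/\bS[z];\Z_p)=\fil^n_N\Delta_{S/\mS}\{n\}$ is the Nygaard filtration step. Likewise $\pi_{2n}\TP(S/\bS[z];\Z_p)=\Delta_{S/\mS}\{n\}$, and on $\gr^n$ the map $\can$ is the inclusion $\fil^n_N\hookrightarrow \Delta$ (up to the twist by $\sigma$, with $u\mapsto E\sigma$). The key inputs are two facts. First, $E^n\Delta\subset \fil^n_N\Delta$, since $u^n\in\pi_{2n}\TCn$ maps to $E^n\sigma^n$. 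Second, $\fil^n_N\Delta\subset \Delta$ is injective. Hence the cokernel of $\gr^n\can_S$ is killed by $E^n$, and the map becomes an isomorphism after inverting $E$. This gives the statement for \eqref{grcan}.

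To pass to \eqref{filcan}, we use the following observation. Since $\gr^n\can[1/E]$ is an equivalence for all $n$, the same holds for each finite stage $\fil^n/\fil^{n+m}$. Exhaustiveness is preserved by the colimit, and completeness is the delicate point. Here I would use that on a qrsp ring $\fil^n_\BMS$ is $(2n-1)$-connective: it is the double-speed Postnikov filtration. Because of this connectivity, $\lim_m$ of $\fil^{n+m}[1/E]$ still vanishes degree-wise, and the connectivity bound survives sheafification up to a shift depending only on the cohomological dimension of $\sX$. This gives the statement for \eqref{filcan}.

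The main obstacle is commuting $[1/E]$ with the infinite totalizations. These are the limit defining quasisyntomic global sections and the completeness limit of the filtration. I would handle both using the connectivity/boundedness just described, and would first try to make that precise carefully.
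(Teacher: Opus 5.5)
Your proposal is correct and is essentially the paper's argument: reduce by quasisyntomic descent to quasiregular semiperfectoid $S$, use evenness of $\TCn(S/\bS[z];\Z_p)$ and $\TP(S/\bS[z];\Z_p)$, and combine $uv=E$ with the fact that $\can$ is an isomorphism on $\pi_0$. The paper handles $\pi_{2n}$ for $n>0$ by multiplying by $v^n$, which becomes invertible after inverting $E$; you instead use $u^n\mapsto E^n\sigma^n$ together with injectivity of $\fil^n_N\subset\Delta$, which is the same computation seen from the other side. Your treatment of commuting $[1/E]$ with the totalization and with the completeness limit is extra care: it uses boundedness of $\gr^n$ on qrsp rings, and the growing connectivity of $\fil^{n+m}$ coming from the bounded cohomological amplitude of the graded pieces on smooth $\sX$. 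This fills in a step the paper compresses into the citation of \cite[Proposition 4.31]{BMS19}.
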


\begin{proof}
By \cite[Proposition 4.31]{BMS19}, it suffices to show that for every quasiregular semiperfectoid ring $S$ over $\sO_K$, the morphisms
\begin{equation}\label{filcanaff}
  \fil_{\BMS}^*\TCn(S/\bS[z];\Z_p)
  \overset{\fil^*\can_S}{\longrightarrow}
  \fil_{\BMS}^*\TP(S/\bS[z];\Z_p)
\end{equation}
and
\begin{equation}\label{grcanaff}
  \gr_{\BMS}^*\TCn(S/\bS[z];\Z_p)
  \overset{\gr^*\can_S}{\longrightarrow}
  \gr_{\BMS}^*\TP(S/\bS[z];\Z_p)
\end{equation}
become equivalences after inverting $E$.

Since $\TCn(S/\bS[z];\Z_p)$ and $\TP(S/\bS[z];\Z_p)$ are concentrated in even degrees, it suffices to show that
\[
  \TCn_{2n}(S/\bS[z];\Z_p)[1/E]
  \overset{\pi_{2n}\can_S}{\longrightarrow}
  \TP_{2n}(S/\bS[z];\Z_p)[1/E]
\]
is an isomorphism for every $n$. If $n\leq 0$, this follows from \cite[Proposition 11.11]{BMS19}. If $n>0$, consider multiplication by $v^n$:
\[
  \TCn_{2n}(S/\bS[z];\Z_p)[1/E]\longrightarrow \TCn_0(S/\bS[z];\Z_p)[1/E].
\]
Since $uv=E$, the element $v$ becomes invertible after inverting $E$, so this map is an isomorphism. The claim therefore reduces to the case $n=0$, which is already known.
\end{proof}

The preceding lemma yields morphisms
\begin{align}
  \fil_{\BMS}^*\TCn(\sX/\bS[z];\Z_p)[1/u]
  &\overset{\fil^*\varphi_{\sX}^{h\mathbb{T}}}{\longrightarrow}
  \fil_{\BMS}^*\TP(\sX/\bS[z];\Z_p) \notag\\
  &\longrightarrow
  \fil_{\BMS}^*\TP(\sX/\bS[z];\Z_p)[1/E] \notag\\
  &\overset{\fil^*\can_{\sX}}{\simeq}
  \fil_{\BMS}^*\TCn(\sX/\bS[z];\Z_p)[1/E] \notag\\
  &=
  \fil_{\BMS}^*\TCn(\sX/\bS[z];\Z_p)[1/uE]
  \label{filfrobcan}
\end{align}
and
\begin{align}
  \gr_{\BMS}^*\TCn(\sX/\bS[z];\Z_p)[1/u]
  &\overset{\gr^*\varphi_{\sX}^{h\mathbb{T}}}{\longrightarrow}
  \gr_{\BMS}^*\TP(\sX/\bS[z];\Z_p) \notag\\
  &\longrightarrow
  \gr_{\BMS}^*\TP(\sX/\bS[z];\Z_p)[1/E] \notag\\
  &\overset{\gr^*\can_{\sX}}{\simeq}
  \gr_{\BMS}^*\TCn(\sX/\bS[z];\Z_p)[1/E] \notag\\
  &=
  \gr_{\BMS}^*\TCn(\sX/\bS[z];\Z_p)[1/uE]
  \label{grfrobcan}
\end{align}
where we use $uv=E$. On homotopy groups, these morphisms are $\varphi$-semilinear.

\begin{prop}\label{bkgr}
The $\mS$-linearization of the homotopy groups of \eqref{grfrobcan},
\begin{equation}\label{bkgrmor}
  \pi_m \gr_{\BMS}^n\TCn(\sX/\bS[z];\Z_p)[1/u]
  \otimes_{\mS,\varphi}\mS[1/E]
  \longrightarrow
  \pi_m \gr_{\BMS}^n\TCn(\sX/\bS[z];\Z_p)[1/u][1/E],
\end{equation}
is an isomorphism for all $m,n\in\Z$. Moreover, if $X$ is smooth proper over $\sO_K$, then
\[
  H^m_{\mS}(\sX)\{n\}:=
  \pi_{2n-m}\gr_{\BMS}^n\TCn(\sX/\bS[z];\Z_p)[1/u]
\]
admits the structure of a Breuil--Kisin module of $E$-height in $[-n,m-n]$ for $0\leq m\leq 2\dim(X/\sO_K)$, and vanishes for $m<0$ or $m>2\dim(X/\sO_K)$.
\end{prop}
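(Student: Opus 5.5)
Following the lemma on $\can$, the plan is to reduce everything to quasiregular semiperfectoid rings $S$ over $\sO_K$, where all objects are even and explicit, and then globalize by quasisyntomic descent.

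\textbf{Step 1: the affine case.} By \cite[Theorem 1.12 and Proposition 11.10]{BMS19}, for such $S$ we have
\[
\gr^n_{\BMS}\TCn(S/\bS[z];\Z_p)[1/u]\simeq \pi_{2n}\TCn(S/\bS[z];\Z_p)[1/u][2n]
\]
and
\[
\gr^n_{\BMS}\TP(S/\bS[z];\Z_p)\simeq \pi_{2n}\TP(S/\bS[z];\Z_p)[2n],
\]
both identified with $n$-fold Breuil--Kisin twists of the relative prismatic cohomology $\Delta_{S/\mS}$; multiplication by $u^n$ and $\sigma^n$ gives the identifications. In these coordinates, \eqref{grfrobcan} on $\pi_{2n}$ becomes the prismatic Frobenius $\varphi:\Delta_{S/\mS}\to\Delta_{S/\mS}$, up to the unit factors coming from $u\mapsto\sigma$, $v\mapsto \varphi(E)\sigma^{-1}$, and $\can(v)=\sigma^{-1}$. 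Inverting $E$, the relative Frobenius of the prismatic complex becomes an isomorphism after linearization: $\varphi^*\Delta_{S/\mS}[1/E]\simeq\Delta_{S/\mS}[1/E]$. This is the prismatic $E$-isogeny property of Bhatt--Scholze, which I will cite. For a general smooth $\sX$ this is not yet a statement about the individual groups $\pi_m$. I will instead establish the isomorphism at the level of complexes, $\varphi^*R\Gamma_{\Delta}(\sX/\mS)\{n\}[1/E]\simeq R\Gamma_\Delta(\sX/\mS)\{n\}[1/E]$, via descent. Since $\varphi:\mS\to\mS$ is flat, $\varphi^*$ commutes with taking homotopy, and \eqref{bkgrmor} follows for all $m,n$.

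\textbf{Step 2: the height range.} Identify $H^m_\mS(\sX)\{n\}$ with $H^m_\Delta(\sX/\mS)\{n\}$, the Breuil--Kisin cohomology of \cite{BMS18} twisted $n$ times. When $X$ is smooth and proper, this group is finitely generated over $\mS$ and vanishes outside $[0,2\dim]$, by the properness and cohomological dimension results of \cite{BMS18,BS19}; $\sX$ has relative dimension $d$ and is quasi-compact. For $m$ in this range, the untwisted Frobenius satisfies the Bhatt--Scholze (or BMS) height bound $E^m\fM\subset\Im(\Phi)\subset\fM$. The proof uses the divided Frobenius on the local de Rham/Nygaard complex: $\varphi$ is divisible by $E^i$ in degree $i$, and there is an inverse up to $E^m$ via $L\eta_E$.

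\textbf{Step 3: effect of the twist.} The Breuil--Kisin twist $\mS\{1\}$ has Frobenius $E^{-1}$, up to a unit, in the normalization forced by $u\mapsto\sigma$, $v\mapsto\varphi(E)\sigma^{-1}$. It therefore shifts the height range by $-n$, giving $[-n,m-n]$. The main obstacle is making this normalization precise: I must check carefully that passing through $[1/u]$ and $\can^{-1}$ produces the factor $E^{-n}$ rather than $\varphi(E)^{\pm n}$. I will test it on $\sX=\Spf\sO_K$ with $m=0$, computing the image of $u^{-n}$ in degree $-2n$. That computation should fix the sign convention, and the general case then follows by multiplicativity of the filtration.
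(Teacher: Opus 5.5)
Your architecture is the paper's. The untwisted case $n=0$ (Frobenius isogeny, Breuil--Kisin structure with $E$-height in $[0,m]$, vanishing outside $[0,2\dim(X/\sO_K)]$) is imported from \cite[Proposition 11.15]{BMS19} and \cite[Theorem 1.8]{BS22}, and general $n$ is reduced to it by multiplying by powers of $u$. Your Step 3 is exactly the paper's commutative diagram. The normalization you single out as the main obstacle is a one-line check. The composite $\Psi=\can^{-1}\circ\varphi^{h\mathbb{T}}$ underlying \eqref{grfrobcan} is multiplicative, and $\Psi(u^{-n})=\sigma^{-n}=\can(v^n)=\can(E^nu^{-n})$, so $\Psi(u^{-n})=E^nu^{-n}$. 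Transported along $x\mapsto u^{-n}x$ to the $n=0$ term, the Frobenius on $H^m_{\mS}(\sX)\{n\}$ therefore becomes $E^{-n}$ times the untwisted one. Since $E^{-n}$ is a unit after inverting $E$, this one identification gives both the isomorphism \eqref{bkgrmor} for every $n$ and the shift of $[0,m]$ to $[-n,m-n]$. No separate complex-level treatment of the twists is needed.

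The step that does not work as written is Step 1. There you take the isogeny $\varphi^*\Delta_{S/\mS}[1/E]\simeq\Delta_{S/\mS}[1/E]$ for quasiregular semiperfectoid $S$ and globalize it ``via descent''. Both halves fail.

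First, the isogeny is a property of smooth algebras. It comes from the comparison $\varphi_{\mS}^*\Delta_{R/\mS}\simeq L\eta_E\Delta_{R/\mS}$ for smooth $R$, which rests on the Hodge--Tate comparison and bounds the defect by a fixed power of $E$. It is false for quasiregular semiperfectoid rings in general: already in the crystalline analogue, Frobenius on $A_{\mathrm{crys}}[1/p]$ is injective but not surjective.

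Second, even granting it termwise, inverting $E$ is a filtered colimit and does not commute with totalization over an infinite quasisyntomic \v{C}ech nerve. Levelwise $E^\infty$-torsion cofibers can have a limit that is not $E^\infty$-torsion; $\prod_j\mS/E^j$ is an example.

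The repair is to apply the smooth statement directly to $\sX$, which is precisely the paper's input for $n=0$. On smooth affines the defect is killed by a uniform power of $E$, so it survives finite Zariski gluing. Flatness of $\varphi$ on $\mS$ then passes the isomorphism to each $\pi_m$, as you say.
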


\begin{proof}
In the case $n=0$, this follows from \cite[Proposition 11.15]{BMS19} and \cite[Theorem 1.8]{BS22}. For general $n$, the claim follows from the commutative diagram
\tiny
\[
\xymatrix{
\pi_{2n-m}\gr_{\BMS}^n\TCn(\sX/\bS[z];\Z_p)[1/u]
  \ar[d]_-{\simeq}^{u^{-n}\cdot}
  \ar[r]^-{\varphi_{\sX}^{h\mathbb{T}}}
&
\pi_{2n-m}\gr_{\BMS}^n\TCn(\sX/\bS[z];\Z_p)[1/E]
  \ar[d]^-{\simeq}_{\sigma^{-n}\cdot}
&
\pi_{2n-m}\gr_{\BMS}^n\TCn(\sX/\bS[z];\Z_p)[1/uE]
  \ar[l]^-{\can_{\sX}}_-{\simeq}
  \ar[d]^-{\simeq}_{E^nu^{-n}\cdot}
\\
\pi_{-m}\gr_{\BMS}^{0}\TCn(\sX/\bS[z];\Z_p)[1/u]
  \ar[r]^-{\varphi_{\sX}^{h\mathbb{T}}}
&
\pi_{-m}\gr_{\BMS}^{0}\TCn(\sX/\bS[z];\Z_p)[1/E]
&
\pi_{-m}\gr_{\BMS}^{0}\TCn(\sX/\bS[z];\Z_p)[1/uE]
  \ar[l]^-{\can_{\sX}}_-{\simeq}
}
\]\normalsize
and the case $n=0$.
\end{proof}

\begin{prop}\label{bkdegCflat}
Let $X$ be a smooth proper variety over $\sO_K$. Assume that $\dim X<p$. Then the spectral sequence
\[
  E_2^{i,j}=H_{\mS}^{i-j}(\sX)\{j\} \Longrightarrow \pi_{-i-j}\bigl(\TCn(\sX/\bS[z];\Z_p)[1/u]\bigr)
\]
is split degenerate after tensoring with $W(\sC^\flat)$.
\end{prop}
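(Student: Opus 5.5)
Base change along $\mS\to W(\sC^\flat)$ (via $\ol\phi$, which is flat since it factors through $\Ainf\to W(\sC^\flat)$ with $W(\sC^\flat)$ a DVR with uniformizer $p$) reduces the statement to a torsion-length computation on the \'etale side, and we then apply Proposition~\ref{sscritflat}. After base change, each Breuil--Kisin module $H^m_\mS(\sX)\{n\}$ of Proposition~\ref{bkgr} becomes an \'etale $\varphi$-module over $W(\sC^\flat)$, since $\xi$ and $\tilde\xi$ are units there. The same holds for the abutment $\pi_*(\TCn(\sX/\bS[z];\Z_p)[1/u])$ in large degrees, by Theorem~\ref{ncbms}(1).

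By Fargues' equivalence (equivalently the \'etale comparison of \cite[Theorem 1.8]{BS22} and \cite[Theorem 14.5]{BMS18}), the base-changed graded pieces are
\[
H^m_\mS(\sX)\otimes_{\mS}W(\sC^\flat)\simeq H^m_\et(X_\sC,\Z_p)\otimes_{\Z_p}W(\sC^\flat),
\]
with Breuil--Kisin twists becoming Tate twists. Theorem~\ref{ncbms}(2) identifies the abutment with
\[
\LK K_{*}(X_\sC)\otimes_{\Z_p}W(\sC^\flat),
\]
using $2$-periodicity to pass to large degrees. Under these identifications, the base-changed filtered object should be identified with Thomason's \'etale filtration on $\LK K(X_\sC)$ tensored with $W(\sC^\flat)$. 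The reason is that both arise as $p$-completed, $\mu$-inverted versions of the motivic filtration on $\TC$ of $\sO_\sC$-algebras, cf.\ the BMS comparison of $\TP[1/\mu]^\wedge_p$ with $K(1)$-local $K$-theory. I would check this compatibility on quasiregular semiperfectoids, where everything is even and concentrated in one degree.

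Next I verify the hypothesis of Proposition~\ref{sscritflat}. The spectral sequence degenerates after tensoring with $W(\sC^\flat)[1/p]$, because rationally the Atiyah--Hirzebruch/Thomason spectral sequence degenerates (Adams operations, as used before \eqref{ineqahss}). Alternatively, the Breuil--Kisin graded pieces sit in different weights of a rational Frobenius/Adams decomposition.

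It then suffices to produce abstract isomorphisms of torsion parts
\[
\bigl(\LK K_i(X_\sC)\otimes W(\sC^\flat)\bigr)_{\tors}\simeq \bigoplus_n \bigl(H^{i+2n}_\et(X_\sC,\Z_p)\otimes W(\sC^\flat)\bigr)_{\tors}.
\]
Since $W(\sC^\flat)$ is flat over $\Z_p$, this follows from Proposition~\ref{kdeg}, namely $\LK K_i(X_\sC)\simeq\bigoplus_n H^{i+2n}_\et(X_\sC,\Z_p)$. Proposition~\ref{kdeg} requires $\dim X_\sC<p-1$; with only $\dim X<p$, I would instead use the sharper form of Theorem~\ref{ktopchdec} for complexes of dimension $2d$, whose denominators involve only primes $\le d$, so $d<p$ suffices. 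Indeed, $\lfloor (2d+1)/2\rfloor!=d!$ is prime to $p$ when $d<p$, and Theorem~\ref{ktopchdec} applied to the $2d$-dimensional complex $(Y_{\bC})^{\an}$ gives exactly this.

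The main obstacle is the identification of the base-changed BMS filtration with the \'etale/Thomason filtration as filtered objects, not merely on graded pieces and abutment separately. Only with that compatibility does Proposition~\ref{sscritflat} apply to the correct filtration. I would first try to construct the comparison map on the quasisyntomic site using $\TCn(-;\Z_p)\to\TP(-;\Z_p)[1/\mu]^\wedge_p\simeq \LK K\otimes W(\sC^\flat)$-type identifications, and then verify that it is filtered and induces the \'etale comparison on $\gr$.
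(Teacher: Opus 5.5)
Your argument is essentially the paper's: reduce via Proposition~\ref{sscritflat} to an abstract isomorphism after base change to $W(\sC^\flat)$. Then identify the abutment with $\LK K_n(X_\sC)\otimes_{\Z_p}W(\sC^\flat)$ via Theorem~\ref{ncbms}(2), identify the graded pieces with \'etale cohomology via the \'etale comparison, and split $\LK K_n(X_\sC)$ by Theorem~\ref{ktopchdec} applied to the $2d$-dimensional complex, which indeed only needs $d<p$.

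The filtered comparison with Thomason's filtration that you single out as the main obstacle is not needed, because Proposition~\ref{sscritflat} only asks for abstract isomorphisms. Its rational-degeneration hypothesis also follows without it, from a rank count: the full (not merely torsion) isomorphism $\LK K_n(X_\sC)\simeq\bigoplus_i H^{-n+2i}_\et(X_\sC,\Z_p)$ makes the $W(\sC^\flat)[1/p]$-dimension of the abutment equal to that of the $E_2$-page in each degree, which forces all differentials to vanish rationally.
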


\begin{proof}
We note that, as $W(\sC^\flat)$-modules,
\[
  H_{\mS}^{i-j}(\sX)\{j\}\otimes_{\mS}W(\sC^\flat)
  \simeq
  H_{\mS}^{i-j}(\sX)\otimes_{\mS}W(\sC^\flat).
\]
By Proposition~\ref{sscritflat}, it suffices to show the following isomorphism of $W(\sC^\flat)$-modules:
\begin{equation}\label{Cflatisom}
  \TCn_n(\sX/\bS[z];\Z_p)[1/u]\otimes_{\mS}W(\sC^\flat)
  \simeq
  \bigoplus_{i\in \Z} H_{\mS}^{-n+2i}(\sX)\otimes_{\mS}W(\sC^\flat).
\end{equation}
Since $\TCn_n(\sX/\bS[z];\Z_p)[1/u]$ is $2$-periodic, we may assume that $n$ is sufficiently large. Then we have
\[
  \TCn_n(\sX/\bS[z];\Z_p)[1/u] \simeq \TCn_n(\sX/\bS[z];\Z_p)
  \]
(see \cite[Proposition 2.6]{M26}).

By Proposition~\ref{tptcformal}, there is a canonical isomorphism
\[
  \TCn_n(\sX/\bS[z];\Z_p)\simeq \TCn_n(X/\bS[z];\Z_p).
\]
By Theorem~\ref{ncbms}, its \'etale realization is identified with $\LK K_n(X_\sC)$. On the other hand, the \'etale realization of $H^i_{\mS}(\sX)$ is $H^i_{\et}(\mf{X}_\sC,\Z_p)$ by \cite[Theorem 1.2]{BMS19}, and this is canonically isomorphic to $H^i_{\et}(X_\sC,\Z_p)$ by \cite{F95}. Hence Theorem~\ref{ktopchdec} yields an isomorphism of $\Z_p$-modules
\[
  \LK K_n(X_\sC)\simeq \bigoplus_{i\in \Z} H_{\et}^{-n+2i}(X_\sC,\Z_p).
\]
By the equivalence between the category of $W(\sC^\flat)$-modules equipped with a Frobenius automorphism and the category of finitely generated $\Z_p$-modules; see \cite[Theorem 3.2.5]{BC09}, we obtain \eqref{Cflatisom}.
\end{proof}

\begin{prop}\label{bkdeg}
Let $X$ be a smooth proper variety over $\sO_K$. Assume that
\[
  e\cdot (2\dim(X/\sO_K)-1)< p-1.
\]
Then the spectral sequence
\[
  E_2^{i,j}=H_{\mS}^{i-j}(\sX)\{j\} \Longrightarrow \pi_{-i-j}\bigl(\TCn(\sX/\bS[z];\Z_p)[1/u]\bigr)
\]
is degenerate.
\end{prop}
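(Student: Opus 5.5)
The plan is to combine Lemma~\ref{degdetect} (with $R=\mS$ and $A=W(\sC^\flat)$) with Proposition~\ref{bkdegCflat}. That proposition gives split degeneration, and in particular degeneration, after tensoring with $W(\sC^\flat)$. Its hypothesis $\dim X<p$ follows from $e(2\dim X-1)<p-1$. So the whole problem reduces to checking the injectivity hypothesis of Lemma~\ref{degdetect}: for every $n,i$, the map
\[
\pi_i\bigl(\gr^n_{\BMS}\TCn(\sX/\bS[z];\Z_p)[1/u]\bigr)\longrightarrow \pi_i\bigl(\gr^n_{\BMS}\TCn(\sX/\bS[z];\Z_p)[1/u]\bigr)\otimes_{\mS}W(\sC^\flat)
\]
must be injective. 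These groups are the Breuil--Kisin modules $H^m_{\mS}(\sX)\{n\}$. Since the Breuil--Kisin twist does not change the underlying $\mS$-module up to isomorphism (multiplication by $u^{-n}$ in Proposition~\ref{bkgr}), it suffices to treat $H^m_{\mS}(\sX)$ for $0\le m\le 2\dim X$; all other degrees vanish.

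The key structural input is to show that each $H^m_{\mS}(\sX)$ has underlying $\mS$-module of the form $\mS^{\oplus a}\oplus\bigoplus_i \mS/p^{a_i}$. Once this is known, injectivity is immediate. The map $\mS\to W(\sC^\flat)$ (via $z\mapsto[\pi^\flat]$, or its Frobenius twist) is injective. Moreover, $\mS/p^{a}\to W(\sC^\flat)/p^{a}$ is injective, because $k[[z]]\to\sC^\flat$ is injective and $W(\sC^\flat)$ is $p$-torsion-free, so injectivity holds on each $p$-graded piece.

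To get this shape I would invoke the structure theory of Section~4. By Proposition~\ref{bkgr}, $H^m_{\mS}(\sX)$ is a Breuil--Kisin module of $E$-height in $[0,m]$. I would first try to use the known $z$-torsion-freeness of prismatic cohomology in the relevant range, following Min \cite{M21}. That argument proceeds by induction on $m$ via the short exact sequences coming from $H^m_{\mS}(\sX)/p$ inside mod-$p$ prismatic cohomology. It uses Theorem~\ref{bkstruct} with $r=m$ for $m\le 2\dim X-1$, which requires $e(2\dim X-1)<p-1$. For the top degree $m=2\dim X$, I would use that $H^{2d}_{\mS}$ is the last nonvanishing group, so it has no $u$-torsion issues from higher degrees. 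Concretely, one shows $H^{2d}_{\mS}(\sX)/p\hookrightarrow H^{2d}(\sX,\mathbb{\Delta}/p)$ and controls the latter by duality or by a direct height argument.

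Next I would use the exact sequence of Proposition~\ref{bkdecomp} to exhibit each $H^m_{\mS}$ as an extension of a finite free Breuil--Kisin module by a $p$-power torsion one without $z$-torsion. Then Proposition~\ref{modfieq} (or Theorem~\ref{bkstruct} directly on the torsion part, plus splitting off the free quotient) gives the desired decomposition. The key point is that the cokernel $\ol{\fM}$ vanishes once $z$-torsion-freeness of the mod $p$ reduction is established.

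I expect the main obstacle to be the top degree $m=2\dim X$. There the height $2\dim X$ exceeds the range in which the hypothesis $e(2\dim X-1)<p-1$ directly applies Theorem~\ref{bkstruct}. My first attempt would be to show that in that degree the torsion of $H^{2d}_{\mS}$ injects into $H^{2d-1}$-data via the Bockstein sequence. An alternative is to show that $u$-torsion or $z$-torsion in $H^{2d}$ would contradict the injectivity of mod-$p$ reduction from degree $2d-1$, whose structure is controlled with height $2d-1$. If this fails, I would fall back on a length comparison against \'etale cohomology as in Section~3, which forces the relevant differentials to vanish without needing the full structure in the top degree.
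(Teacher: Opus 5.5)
Your reduction is the same as the paper's. You use Lemma~\ref{degdetect} for $\mS\to W(\sC^\flat)$ and Proposition~\ref{bkdegCflat} for degeneration after base change. You then deduce injectivity of $H^m_{\mS}(\sX)\{n\}\to H^m_{\mS}(\sX)\{n\}\otimes_{\mS}W(\sC^\flat)$ from the shape $\mS^{\oplus a}\oplus\bigoplus_i\mS/p^{a_i}$, and your check of injectivity for such modules is fine. The paper does not rebuild this shape from Section~4. It simply quotes \cite[Theorem~5.11]{M21}, $H^*_{\mS}(\sX)\simeq H^*_{\et}(X_\sC,\Z_p)\otimes_{\Z_p}\mS$, and is done.

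As written, however, your proposal leaves the top degree $m=2d$, $d=\dim(X/\sO_K)$, open. You list Bockstein, duality, height and length-comparison ideas but carry none of them out. Without this case the injectivity hypothesis of Lemma~\ref{degdetect} is not verified for the groups $H^{2d}_{\mS}(\sX)\{n\}$, and these do occur as targets of differentials. You are right that Theorem~\ref{bkstruct} with $r=2d$ is unavailable, but no height bound is needed in this degree.

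Put $M:=H^{2d}_{\mS}(\sX)$. Since $H^m_{\mS}(\sX)=0$ for $m>2d$, top-degree cohomology commutes with derived base change. So the crystalline specialization along $\tilde\phi$ gives $M\otimes_{\mS,\tilde\phi}W\simeq H^{2d}_{\cry}(X_k/W)$. This is torsion-free: by Poincar\'e duality it is $\mathrm{Hom}_W(H^0_{\cry},W)\oplus\Ext^1_W(H^1_{\cry},W)$, and $H^1_{\cry}(X_k/W)$ is torsion-free. As Frobenius is bijective on $W$, $M/zM\simeq W^{\oplus r}$, so Nakayama gives a surjection $\mS^{\oplus r}\twoheadrightarrow M$. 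By Proposition~\ref{bkdecomp}, $M[1/p]\simeq M_{\free}[1/p]$ is free over $\mS[1/p]$, and reducing modulo $z$ shows its rank is $r$. Hence $\mS[1/p]^{\oplus r}\to M[1/p]$ is an isomorphism. The kernel of $\mS^{\oplus r}\to M$ is therefore $p$-power torsion inside $\mS^{\oplus r}$, hence zero. Thus $M$ is finite free and injectivity in degree $2d$ is automatic. Combined with \cite[Theorem~5.11]{M21} in the remaining degrees, your argument is complete.
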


\begin{proof}
By \cite[Theorem 5.11]{M21}, one has
\[
  H_{\mS}^{*}(\sX)\simeq H_{\et}^{*}(X_\sC,\Z_p)\otimes_{\Z_p}\mS.
\]
Thus the natural map
\[
  H_{\mS}^{*}(\sX)\to H_{\mS}^{*}(\sX)\otimes_{\mS}W(\sC^\flat)
\]
is injective. The claim now follows from Lemma~\ref{degdetect} and Proposition~\ref{bkdegCflat}.
\end{proof}


\subsection{\texorpdfstring{Split degeneration of the Breuil--Kisin-to-$\TCn$ spectral sequence}{Split degeneration of the Breuil--Kisin-to-TC spectral sequence}}

\begin{prop}
Let $X$ be a smooth proper variety over $\sO_K$. Assume that
\[
  e\cdot (2\dim(X/\sO_K)-1)< p-1.
\]
Then the $\mS$-linearization of the homotopy groups of \eqref{filfrobcan},
\begin{equation}\label{bkfilmor}
  \pi_m \fil_{\BMS}^n\TCn(\sX/\bS[z];\Z_p)[1/u]
  \otimes_{\mS,\varphi}\mS[1/E]
  \longrightarrow
  \pi_m \fil_{\BMS}^n\TCn(\sX/\bS[z];\Z_p)[1/u][1/E],
\end{equation}
is an isomorphism for all $m,n\in\Z$. In particular, if $X$ is smooth proper over $\sO_K$, then
\[
  \pi_m \fil_{\BMS}^n\TCn(\sX/\bS[z];\Z_p)[1/u]
\]
admits the structure of a Breuil--Kisin module.
\end{prop}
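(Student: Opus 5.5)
The plan is to deduce the filtered statement from the graded one (Proposition~\ref{bkgr}) using the degeneration of Proposition~\ref{bkdeg}, by induction on the filtration. Fix $m$.

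First, I would reduce to finitely many graded steps. Since the filtration on $\TCn(\sX/\bS[z];\Z_p)[1/u]$ is complete, exhaustive and locally finite in each homotopy degree, and $H^{a}_{\mS}(\sX)\{b\}$ vanishes unless $0\le a\le 2\dim(X/\sO_K)$, only finitely many $n$ contribute in degree $m$. Thus there are integers $n_0\le n_1$ with two properties. For $n>n_1$, $\pi_m\fil^n_{\BMS}$ and $\pi_{m-1}\fil^n_{\BMS}$ vanish; here I would use completeness together with the vanishing of the relevant graded pieces, passing to the limit. For $n\le n_0$, $\fil^n$ agrees with the whole object in degrees $m,m\pm1$.

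Next, by Proposition~\ref{bkdeg}, the spectral sequence degenerates. Hence for every $n$ the long exact sequence attached to $\fil^{n+1}\to\fil^n\to\gr^n$ splits into short exact sequences
\[
0\to \pi_m\fil^{n+1}\to \pi_m\fil^n\to \pi_m\gr^n\to 0 .
\]
Degeneracy means $\pi_m\fil^{n+1}\to\pi_m M$ is injective, so the map into $\pi_m\fil^n$ is injective. This holds in every degree, so the connecting maps vanish.

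The maps \eqref{filfrobcan} and \eqref{grfrobcan} are compatible, since they come from a morphism of filtered spectra. Linearizing, I would obtain a map of short exact sequences. The top row is the sequence above tensored with $\otimes_{\mS,\varphi}\mS[1/E]$, which stays exact because $\varphi$ on $\mS$ is flat and localization is flat. The bottom row is the same sequence with $E$ inverted.

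Descending induction on $n$ then finishes the argument. The base case $n>n_1$ is trivial. The graded terms are isomorphisms by Proposition~\ref{bkgr}, so the five lemma gives \eqref{bkfilmor} for all $n$.

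For the final clause, $\pi_m\fil^n$ is an iterated extension of the modules $H^{a}_{\mS}(\sX)\{b\}$, which are finitely generated over $\mS$. Hence $\pi_m\fil^n$ is finitely generated over $\mS$. Combined with the isomorphism $\varphi^*\fM[1/E]\simeq\fM[1/E]$, this gives a Breuil--Kisin module structure.

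The main obstacle I expect is the finiteness and vanishing for $n\gg0$. This requires care that the filtration on the $u$-localized object is still complete in the needed sense. I would handle it by first choosing $m$ large via $2$-periodicity, so that the $[1/u]$-localization agrees with $\TCn$ itself as in the proof of Proposition~\ref{bkdegCflat}, and then using completeness of the BMS filtration on $\TCn$.
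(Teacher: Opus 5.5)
Your proposal is correct and follows the paper's own proof: Proposition~\ref{bkdeg} turns the filtration into short exact sequences $0\to\pi_m\fil^{n+1}_{\BMS}\to\pi_m\fil^{n}_{\BMS}\to H^{2n-m}_{\mS}(\sX)\{n\}\to 0$, the vanishing of $H^{2n-m}_{\mS}(\sX)\{n\}$ for $2n-m>2\dim(X/\sO_K)$ together with completeness kills $\pi_m\fil^n_{\BMS}$ for $n\gg 0$, and descending induction with Proposition~\ref{bkgr} and the five lemma finishes (your remark that $\varphi$ on $\mS$ is flat is a step the paper leaves implicit). The paper simply takes completeness of the filtration on the $u$-localized object as given, so it does not use your detour through large $m$; that detour would also need a filtration-level comparison between $\TCn[1/u]$ and $\TCn$, not just one on total homotopy groups.
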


\begin{proof}
By Proposition~\ref{bkdeg}, there is a short exact sequence
\begin{equation}\label{bkshort}
  0 \to \pi_m \fil_{\BMS}^{n+1}\TCn(\sX/\bS[z];\Z_p)[1/u]
  \to \pi_m \fil_{\BMS}^{n}\TCn(\sX/\bS[z];\Z_p)[1/u]
  \to H_{\mS}^{2n-m}(\sX)\{n\}
  \to 0.
\end{equation}
Since the filtration $\fil_{\BMS}^*\TCn(\sX/\bS[z];\Z_p)[1/u]$ is exhaustive, we have
\[
  \lim_n \pi_m \fil_{\BMS}^{n}\TCn(\sX/\bS[z];\Z_p)[1/u]=0.
\]
Moreover, since $H_{\mS}^{2n-m}(\sX)\{n\}=0$ for $2n-m>2\dim(X/\sO_K)$, we obtain
\[
  \pi_m \fil_{\BMS}^{n}\TCn(\sX/\bS[z];\Z_p)[1/u]
  \simeq
  \pi_m \fil_{\BMS}^{n+1}\TCn(\sX/\bS[z];\Z_p)[1/u]
  = \cdots =0
\]
for $n>m/2+\dim(X/\sO_K)$. Fix $m\in\Z$. The claim now follows by descending induction on $n$, using \eqref{bkshort} and Proposition~\ref{bkgr}.
\end{proof}
\begin{lemma}\label{torssplit}
 Let
\[
  0\to A \xrightarrow{\alpha} B \xrightarrow{\beta} C\to 0
\]
be a short exact sequence of Breuil--Kisin modules over $\mS$ whose underlying $\mS$-modules are of the form
\[
  A\simeq \mS^{\oplus a}\oplus  \bigoplus_i \mS/(p^{a_i}),\qquad
  B\simeq \mS^{\oplus b}\oplus \bigoplus_i \mS/(p^{b_i}),\qquad
  C\simeq \mS^{\oplus c}\oplus \bigoplus_i \mS/(p^{c_i}).
\]
If, after tensoring with $W(\sC^\flat)$, the
induced sequence
\[
  0\to A_{W(\sC^\flat)}\to B_{W(\sC^\flat)}\to C_{W(\sC^\flat)}\to 0
\]
is split exact, where $M_W:=M\otimes_{\mS}W$, then
\[
  0\to A_{\tors}\to B_{\tors}\to C_{\tors}\to 0
\]
is split exact as a sequence of $\mS$-modules.
\end{lemma}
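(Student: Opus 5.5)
The plan is to reduce everything to an $\Ext^1$ computation between modules of the form $\bigoplus\mS/p^{a}$, and to use that the base change $\mS\to W(\sC^\flat)$ (via $z\mapsto[\pi^\flat]$) is faithfully flat.

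\textbf{Step 1: faithful flatness.} I would first check that $\mS\to W(\sC^\flat)$ is faithfully flat. Modulo $p$ it is $k[[z]]\to \sC^\flat$. This map is flat because $\sC^\flat$ is a torsion-free module over the PID $k[[z]]$. It is local on the maximal ideals, since $z\mapsto \pi^\flat$ has positive valuation. Both rings are $p$-complete and $p$-torsion-free, so the local flatness criterion upgrades this to faithful flatness. In particular, for every $m\ge1$ the map $\mS/p^m\to W(\sC^\flat)/p^m$ is injective.

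\textbf{Step 2: exactness on torsion parts.} Next I would show that $0\to A_{\tors}\to B_{\tors}\to C_{\tors}\to 0$ is exact over $\mS$. Since the free summands stay $p$-torsion-free after base change, we have $(M\otimes_\mS W(\sC^\flat))_{\tors}=M_{\tors}\otimes_\mS W(\sC^\flat)$ for $M=A,B,C$. The sequence is left exact on torsion automatically. Because the base-changed sequence is split exact, its torsion sequence is exact. Hence the cokernel of $B_{\tors}\to C_{\tors}$ vanishes after the faithfully flat base change, so it is already zero over $\mS$.

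\textbf{Step 3: vanishing of the extension class.} Let $e\in\Ext^1_\mS(C_{\tors},A_{\tors})$ be the class of the torsion sequence. Since $C_{\tors}\simeq\bigoplus_i\mS/p^{c_i}$ and $A_{\tors}\simeq\bigoplus_j\mS/p^{a_j}$, we get
\[
\Ext^1_\mS(C_{\tors},A_{\tors})\simeq\bigoplus_{i,j}\mS/p^{\min(a_j,c_i)}.
\]
This is computed from the free resolution $0\to\mS\xrightarrow{p^{c}}\mS\to\mS/p^{c}\to0$. For finitely presented modules, $\Ext$ commutes with flat base change, so the image of $e$ is the class of the base-changed torsion sequence in
\[
\Ext^1_{W(\sC^\flat)}\bigl(C_{\tors,W(\sC^\flat)},A_{\tors,W(\sC^\flat)}\bigr)\simeq\bigoplus_{i,j}W(\sC^\flat)/p^{\min(a_j,c_i)}.
\]
That class is zero, because a split exact sequence restricts to a split sequence on torsion submodules: the projection onto the torsion part of $A_{W(\sC^\flat)}$ restricts to a retraction of $A_{\tors,W(\sC^\flat)}\to B_{\tors,W(\sC^\flat)}$. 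Componentwise injectivity of $\mS/p^m\to W(\sC^\flat)/p^m$ from Step 1 then gives $e=0$, so the torsion sequence splits over $\mS$.

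\textbf{Expected obstacle.} The step I expect to need the most care is Step 1, namely faithful flatness of $\mS\to W(\sC^\flat)$ and the resulting injectivity modulo $p^m$. The $\Ext$ computation in Step 3 is routine once that is in place. If the direct flatness argument were problematic, I would fall back on comparing invariants: the ranks of $p^jM/p^{j+1}M$ over $k[[z]]$, as in Lemma~\ref{pelemcrit}. This would show $B_{\tors}\simeq A_{\tors}\oplus C_{\tors}$ abstractly, and I would then adapt the argument of \cite[Corollary 2.11]{L22} to $p$-elementary $\mS$-modules.
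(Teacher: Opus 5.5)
\textbf{Gap: Step 1 is false, and Step 2 depends on it.} The map $\mS\to W(\sC^\flat)$ is flat, but it is not faithfully flat. The ring $W(\sC^\flat)$ is a complete DVR with uniformizer $p$ and residue field the \emph{field} $\sC^\flat$, so $[\pi^\flat]$ is a unit in it; the paper's proof notes explicitly that $z$ goes to a unit. Hence the map is not local, and $\mS/(p,z)\otimes_{\mS}W(\sC^\flat)=W(\sC^\flat)/(p,[\pi^\flat])=0$. You seem to be thinking of $\Ainf=W(\sO_\sC^\flat)$, over which $\mS$ is indeed faithfully flat.

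Step 3 survives. $\Ext$ commutes with flat base change. Also, $\mS/p^m\to W(\sC^\flat)/p^m$ is still injective: both sides are flat over $\Z/p^m$, and on $p$-adic graded pieces the map is the injection $k[[z]]\hookrightarrow\sC^\flat$, $z\mapsto\pi^\flat$.

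Step 2 does not survive. Put $Q:=\Coker(B_{\tors}\to C_{\tors})$. The vanishing of $Q\otimes_{\mS}W(\sC^\flat)$ only gives $Q_{(p)}=0$. Finite-length modules such as $\mS/(p,z)$ are invisible to this base change, so you cannot conclude $Q=0$. Your rank-based fallback has the same blind spot, since ranks over $k[[z]]$ do not detect finite-length modules either.

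\textbf{The missing idea.} Use that $A_{\tf}$ is free, so the $\Ext$-injectivity from your Step 3 can be applied once more. The paper proceeds as follows.
\begin{enumerate}
\item Set $D:=\beta^{-1}(C_{\tors})\subset B$ and consider the extension $0\to A_{\tf}\to D/A_{\tors}\to C_{\tors}\to 0$.
\item Its class lies in $\Ext^1_{\mS}(C_{\tors},A_{\tf})\simeq\bigoplus_i(\mS/p^{c_i})^{\oplus a}$, which injects into the analogous group over $W(\sC^\flat)$.
\item The base-changed extension is obtained from the split sequence over $W(\sC^\flat)$ by pullback along $(C_{\tors})_{W(\sC^\flat)}\to C_{W(\sC^\flat)}$ and pushout along $A_{W(\sC^\flat)}\to(A_{\tf})_{W(\sC^\flat)}$. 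Hence it splits, so the class over $\mS$ vanishes.
\item Take a section $s\colon C_{\tors}\to D/A_{\tors}$. For $c$ with $p^Nc=0$, any lift $\tilde b\in D$ of $s(c)$ satisfies $p^N\tilde b\in A_{\tors}$, so $\tilde b\in B_{\tors}$.
\end{enumerate}
Equivalently, choose $N$ with $p^NC_{\tors}=0$. For any lift $b\in B$ of $c\in C_{\tors}$, send $c$ to the class of $p^Nb$ in $A/(A_{\tors}+p^NA)\simeq A_{\tf}/p^NA_{\tf}\simeq(\mS/p^N)^{\oplus a}$. This map has kernel exactly $\beta(B_{\tors})$, so it embeds $Q$ into a module that injects into its base change. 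Therefore $Q\hookrightarrow Q\otimes_{\mS}W(\sC^\flat)=0$.

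With Step 2 repaired in either way, your Step 3 is exactly the paper's concluding argument.
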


\begin{proof}
We write
\[
  \ol{W}:=W(\sC^\flat).
\]
The map $\mS\to \ol{W}$ sends $z$ to a unit.  In particular, for every $n\geq 1$
the natural map
\[
  \mS/p^n\longrightarrow \ol{W}/p^n
\]
is injective.  We shall use this repeatedly.

First we record an elementary injectivity statement for extension classes.
Let
\[
  T=\bigoplus_i \mS/p^{a_i},
  \qquad
  T'=\bigoplus_j \mS/p^{b_j},
\]
Then the natural map
\[
  \Ext^1_{\mS}(T,T')
  \longrightarrow
  \Ext^1_{\ol{W}}(T_{\ol{W}},T'_{\ol{W}})
\]
is injective.  Indeed, by additivity it is enough to treat the case
$T=\mS/p^a$ and $T'=\mS/p^b$.  The free resolution
\[
  0\longrightarrow \mS
  \xrightarrow{p^a}
  \mS
  \longrightarrow
  \mS/p^a
  \longrightarrow 0
\]
gives
\[
  \Ext^1_{\mS}(\mS/p^a,\mS/p^b)
  \simeq
  \mS/p^{\min(a,b)}.
\]
Similarly,
\[
  \Ext^1_{\ol{W}}({\ol{W}}/p^a,{\ol{W}}/p^b)
  \simeq
  {\ol{W}}/p^{\min(a,b)}.
\]
Under these identifications the base change map is the natural injection
\[
  \mS/p^{\min(a,b)}
  \longrightarrow
  {\ol{W}}/p^{\min(a,b)}.
\]
Thus the desired map on $\Ext^1$ is injective.

We next prove that the sequence on $p$-power torsion submodules is exact.
The injectivity of
\[
  A_{\tors}\longrightarrow B_{\tors}
\]
and the equality
\[
  \Ker(B_{\tors}\to C_{\tors})=A_{\tors}
\]
are immediate from the exactness of
\[
  0\to A\to B\to C\to 0.
\]
It remains to prove that
\[
  B_{\tors}\longrightarrow C_{\tors}
\]
is surjective.

Let $D:=\beta^{-1}(C_{\tors})\subset B$.  Then we have an exact sequence
\[
  0\to A\to D\to C_{\tors}\to 0.
\]
Quotienting by $A_{\tors}$ gives
\[
  0\to A/A_{\tors}
  \longrightarrow
  D/A_{\tors}
  \longrightarrow
  C_{\tors}
  \to 0.
\]
Since $A/A_{\tors}$ is finite free over $\mS$ and $C_{\tors}$ is a direct
sum of modules $\mS/p^c$, the extension class lies in
\[
  \Ext^1_{\mS}(C_{\tors},A/A_{\tors}).
\]
After tensoring with ${\ol{W}}$, the original sequence
\[
  0\to A_{\ol{W}}\to B_{\ol{W}}\to C_{\ol{W}}\to 0
\]
is split exact by assumption.  Pulling this split sequence back along
\[
  (C_{\tors})_{\ol{W}}\longrightarrow C_{\ol{W}}
\]
shows that the base change of
\[
  0\to A/A_{\tors}
  \longrightarrow
  D/A_{\tors}
  \longrightarrow
  C_{\tors}
  \to 0
\]
to ${\ol{W}}$ is split exact.  By the injectivity of the base change map on
$\Ext^1$, the original extension is already split over $\mS$.

Thus there is an $\mS$-linear section
\[
  s:C_{\tors}\longrightarrow D/A_{\tors}.
\]
Choose $c\in C_{\tors}$ and let $\tilde b\in D$ be a lift of $s(c)$. Choose $N\geq 1$ such that $p^N c=0$. Since $s$ is $\mS$-linear, we have
\[
  p^N s(c)=s(p^N c)=0.
\]
Hence, if $\tilde b\in D$ is any lift of $s(c)$, then
\[
  p^N\tilde b\in A_{\tors}.
\]
Since $A_{\tors}$ is $p$-power torsion, there exists $N'\geq 1$ such that
\[
  p^{N'}(p^N\tilde b)=0.
\]
Thus $p^{N+N'}\tilde b=0$, so $\tilde b$ is $p$-power torsion. Viewing
$D$ as a submodule of $B$, we obtain $\tilde b\in B_{\tors}$. Moreover, the image of $\tilde b$ in $C_{\tors}$ is $c$. Therefore every
element of $C_{\tors}$ lifts to an element of $B_{\tors}$, and hence
$B_{\tors}\to C_{\tors}$ is surjective.

We have therefore obtained a short exact sequence
\[
  0\to A_{\tors}
  \longrightarrow
  B_{\tors}
  \longrightarrow
  C_{\tors}
  \to 0.
\]

It remains to prove that this short exact sequence splits.  Let
\[
  \xi\in \Ext^1_{\mS}(C_{\tors},A_{\tors})
\]
be its extension class.  After tensoring with $W$, the original sequence
splits:
\[
  B_{\ol{W}}\simeq A_{\ol{W}}\oplus C_{\ol{W}}.
\]
Since
\[
  A_{\tors},\quad B_{\tors},\quad C_{\tors}
\]
are direct sums of modules of the form $\mS/p^n$, their base changes are
exactly the $p$-power torsion submodules of $A_W$, $B_{\ol{W}}$, and $C_{\ol{W}}$.
Hence the induced sequence
\[
  0\to (A_{\tors})_{\ol{W}}
  \longrightarrow
  (B_{\tors})_{\ol{W}}
  \longrightarrow
  (C_{\tors})_{\ol{W}}
  \to 0
\]
is split exact.  Equivalently, the image of $\xi$ under
\[
  \Ext^1_{\mS}(C_{\tors},A_{\tors})
  \longrightarrow
  \Ext^1_{\ol{W}}((C_{\tors})_{\ol{W}},(A_{\tors})_{\ol{W}})
\]
is zero.

By the injectivity proved at the beginning of the proof, we have $\xi=0$.
Therefore
\[
  0\to A_{\tors}
  \longrightarrow
  B_{\tors}
  \longrightarrow
  C_{\tors}
  \to 0
\]
is split exact as a sequence of $\mS$-modules.
\end{proof}

\begin{lemma}\label{glsplit}
Let
\[
  0\to A \xrightarrow{\alpha} B \xrightarrow{\beta} C\to 0
\]
be a short exact sequence of $\mS$-modules. Assume that
\[
  0\to A_{\tors}\to B_{\tors}\to C_{\tors}\to 0
\]
is split exact. Assume moreover that the quotient
\[
  C_{\tf}:=C/C_{\tors}
\]
is finite free over $\mS$. Then $\beta:B\to C$ is split surjective.
\end{lemma}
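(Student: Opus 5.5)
The plan is to build an $\mS$-linear section of $\beta$ one summand at a time. First decompose $C$ as the direct sum of its torsion submodule and a free complement. Then split $\beta$ over the torsion part using the hypothesis, and over the free part using freeness. No Frobenius structure is needed; the statement is purely about $\mS$-modules.

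\emph{Step 1: decompose $C$.} Consider the projection $C\to C_{\tf}=C/C_{\tors}$, which is surjective with kernel $C_{\tors}$. Since $C_{\tf}$ is finite free over $\mS$ by assumption, this surjection admits an $\mS$-linear section $t\colon C_{\tf}\to C$: choose a basis of $C_{\tf}$ and lift each basis vector to $C$. Setting $F:=t(C_{\tf})$, a finite free $\mS$-module, we get an internal direct sum decomposition
\[
  C = C_{\tors}\oplus F .
\]

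\emph{Step 2: split over $C_{\tors}$.} By hypothesis the sequence $0\to A_{\tors}\to B_{\tors}\to C_{\tors}\to 0$ is split exact. Here the map $B_{\tors}\to C_{\tors}$ is the restriction of $\beta$, which makes sense because $\beta$ carries $p$-power torsion to $p$-power torsion. So there is an $\mS$-linear map $s_{\tors}\colon C_{\tors}\to B_{\tors}$ with $\beta\circ s_{\tors}=\mathrm{id}_{C_{\tors}}$. Composing with the inclusion $B_{\tors}\subset B$, we view $s_{\tors}$ as a map $C_{\tors}\to B$.

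\emph{Step 3: split over $F$.} Since $\beta$ is surjective and $F$ is free, choose for each basis element $f_k$ of $F$ some $b_k\in B$ with $\beta(b_k)=f_k$. Define $s_F\colon F\to B$ by $f_k\mapsto b_k$; then $\beta\circ s_F=\mathrm{id}_F$.

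\emph{Step 4: assemble.} Put $s:=s_{\tors}\oplus s_F\colon C=C_{\tors}\oplus F\to B$. This map is $\mS$-linear and satisfies $\beta\circ s=\mathrm{id}_C$, so $\beta$ is split surjective.

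There is no genuine obstacle. The only point requiring care is Step 2: the splitting hypothesis is applied to the restriction of $\beta$ to torsion, so the section it provides must be checked to be a section of $\beta$ itself on $C_{\tors}$. This holds because $B_{\tors}\to C_{\tors}$ is literally $\beta|_{B_{\tors}}$.
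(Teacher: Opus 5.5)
Your proof is correct and follows essentially the same route as the paper. Like the paper, you split off $C_{\tf}$ using its freeness to write $C=C_{\tors}\oplus C_{\tf}$, use the given section $s_{\tors}$ on the torsion summand, lift the free summand by projectivity, and add the two maps to obtain a section of $\beta$.
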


\begin{proof}
Choose a splitting
\[
  s_{\tors}:C_{\tors}\to B_{\tors}\subset B
\]
of $B_{\tors}\to C_{\tors}$. Since $C\to C_{\tf}$ is split, we may write
\[
  C\simeq C_{\tors}\oplus C_{\tf}
\]
as an $\mS$-module. Let
\[
  \iota_{\tf}:C_{\tf}\to C
\]
be the corresponding section.

Since $C_{\tf}$ is projective and $\beta:B\to C$ is surjective, the map
$\iota_{\tf}$ lifts to an $\mS$-linear map
\[
  \widetilde{s}_{\tf}:C_{\tf}\to B
\]
such that
\[
  \beta\circ \widetilde{s}_{\tf}=\iota_{\tf}.
\]
Now define
\[
  s:C=C_{\tors}\oplus C_{\tf}\longrightarrow B
\]
by
\[
  s(x,y):=s_{\tors}(x)+\widetilde{s}_{\tf}(y).
\]
Then
\[
  \beta\circ s=\mathrm{id}_C.
\]
Therefore $\beta$ is split surjective.
\end{proof}

\begin{thm}\label{bksplit}
Let $X$ be a smooth proper variety over $\sO_K$, and put $d:=\dim(X/\sO_K)$. Assume $2de<p-1$. Then the spectral sequence
\[
  E_2^{i,j}
  =
  H_{\mS}^{i-j}(\sX)\{j\}
  \Longrightarrow
  \pi_{-i-j}\bigl(
    \TCn(\sX/\bS[z];\Z_p)[1/u]
  \bigr)
\]
is split degenerate. In particular, using \cite[Theorem 5.11]{M21}, there is an abstract
isomorphism
\[
  \pi_m\fil_{\BMS}^n
  \TCn(\sX/\bS[z];\Z_p)[1/u]
  \simeq
  \bigoplus_{i\geq 0}
  H_{\et}^{2(n+i)-m}(X_{\sC},\Z_p)
  \otimes_{\Z_p}\mS .
\]
\end{thm}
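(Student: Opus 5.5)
Since $2de<p-1$ implies $e(2d-1)<p-1$, Proposition~\ref{bkdeg} applies: the spectral sequence is already degenerate. So for each $m,n$ we have the short exact sequences \eqref{bkshort}
\[
0\to F^{n+1}\to F^n\to H^{2n-m}_{\mS}(\sX)\{n\}\to 0,\qquad F^n:=\pi_m\fil_{\BMS}^n\TCn(\sX/\bS[z];\Z_p)[1/u],
\]
of Breuil--Kisin modules, using the preceding proposition. Moreover $F^n=0$ for $n>m/2+d$. What remains is to show that each inclusion $F^{n+1}\subset F^n$ is split. We then obtain $F^n\simeq\bigoplus_{i\ge0}H^{2(n+i)-m}_{\mS}(\sX)\{n+i\}$ as $\mS$-modules. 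Combined with $H^*_{\mS}(\sX)\simeq H^*_{\et}(X_\sC,\Z_p)\otimes_{\Z_p}\mS$ from \cite[Theorem 5.11]{M21}, this gives the displayed isomorphism.

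The plan is a descending induction on $n$ that establishes two things at once. The first is that the underlying $\mS$-module of each $F^n$ has the shape $\mS^{\oplus a}\oplus\bigoplus\mS/p^{a_i}$. The second is that the sequence above splits. For the shape, apply Proposition~\ref{modfieq} with $r=2d$ (so $er<p-1$). Each graded piece $H^{2n-m}_{\mS}(\sX)\{n\}$ is a Breuil--Kisin module of $E$-height in $[-n,m-n]$ by Proposition~\ref{bkgr}. By \cite[Theorem 5.11]{M21} its underlying module is $\mS^{\oplus}\oplus\bigoplus\mS/p^{c}$.

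I must place this graded piece in $\overline{\Mod}^{r,\varphi}_{/\mS_\infty}$. Twisting by $\{-n\}$ (i.e.\ using the untwisted $H^{2n-m}_{\mS}(\sX)$, which has height in $[0,2n-m]\subset[0,2d]$) handles this. The free part lies in $\overline{\Mod}^{r,\varphi}_{/\mS_1}$ directly. For the torsion part I would use Liu's criterion (no $z$-torsion) to put it in $\BKModinf^{r,\varphi}$. Its $p$-adic graded pieces then give a filtration by objects of $\Mod^{r,\varphi}_{/\mS_1}$. The extension $F^{n+1}\to F^n\to H\{n\}$ is then handled inside the extension-closed category, after a uniform twist. \textbf{This twist bookkeeping is the step I expect to be delicate.} The heights of the different graded pieces of $F^n$ are shifted differently by the Breuil--Kisin twists $\{j\}$. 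I would handle it by first multiplying by a suitable power of $u$ as in the diagram in Proposition~\ref{bkgr}, so that everything is measured against the untwisted cohomology with heights in $[0,2d]$. This is exactly where $2de<p-1$ (rather than $e(2d-1)<p-1$) is needed. Proposition~\ref{modfieq} then yields the required shape for $F^n$.

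For the splitting, base change to $W(\sC^\flat)$. By Proposition~\ref{bkdegCflat} (applicable since $d<p$), the filtration is split degenerate after tensoring with $W(\sC^\flat)$. Since $\mS\to W(\sC^\flat)$ is flat, the base-changed short exact sequence is the base change of the one above, and it is split. Lemma~\ref{torssplit} then gives a splitting of $0\to F^{n+1}_{\tors}\to F^n_{\tors}\to H_{\tors}\to0$. The torsion-free quotient of $H$ is finite free by the shape statement. So Lemma~\ref{glsplit} shows $F^n\to H^{2n-m}_{\mS}(\sX)\{n\}$ is split surjective, completing the induction.
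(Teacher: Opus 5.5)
Your proposal is correct and follows the paper's proof essentially step for step. Degeneration comes from Proposition~\ref{bkdeg}, the module shape of the filtered pieces from Proposition~\ref{modfieq} with $r=2d$, and the splitting from base change to $W(\sC^\flat)$ followed by Lemmas~\ref{torssplit} and~\ref{glsplit}.

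The twist bookkeeping you flag is resolved in the paper by the $2$-periodicity $\fM_{m,n}\simeq\fM_{m+2,n+1}$ (your $F^n$ is $\fM_{m,n}$), which reduces to $m=-2d$ and $m=-2d-1$. There the graded pieces $H^{2n+2d}_{\mS}(\sX)\{n\}$ with $-d\le n\le 0$ (resp.\ $H^{2n+2d+1}_{\mS}(\sX)\{n\}$ with $-d\le n\le -1$) are still twisted, but their $E$-heights $[-n,n+2d]$ (resp.\ $[-n,n+2d+1]$) all lie in $[0,2d]$. Note that untwisting only the top quotient, as in the diagram of Proposition~\ref{bkgr}, would not work: it gives the deeper pieces positive twists and hence negative lower height bounds.
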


\begin{proof}
Write
\[
  \fM_{m,n}:=
  \pi_m\fil_{\BMS}^n
  \TCn(\sX/\bS[z];\Z_p)[1/u].
\]
By Proposition~\ref{bkdeg}, for every $m,n\in\Z$ we have a short exact
sequence
\begin{equation}\label{filshort}
  0
  \to
  \fM_{m,n+1}
  \to
  \fM_{m,n}
  \to
  H_{\mS}^{2n-m}(\sX)\{n\}
  \to
  0.
\end{equation}
Put
\[
  H_{m,n}:=H_{\mS}^{2n-m}(\sX)\{n\}.
\]
Then \eqref{filshort} takes the form
\begin{equation}\label{Mrow}
  0
  \to
  \fM_{m,n+1}
  \to
  \fM_{m,n}
  \to
  H_{m,n}
  \to
  0.
\end{equation}
It is enough to prove that \eqref{filshort} is split for every
$m,n$, since the filtration is finite in each total degree.

We first discuss the structure of the modules $\fM_{m,n}$.  The spectrum
\[
  \TCn(\sX/\bS[z];\Z_p)[1/u]
\]
is $2$-periodic, and hence there are isomorphisms of $\mS$-module
\begin{equation}\label{period}
  \fM_{m,n}\simeq \fM_{m+2,n+1}.
\end{equation}
Thus, according to the parity of $m$, it is enough to analyze
\[
  \fM_{-2d,n}
  \quad\text{and}\quad
  \fM_{-2d-1,n}.
\]

For $m=-2d$, the graded pieces in \eqref{filshort} are
$H_{\mS}^{2n+2d}(\sX)\{n\}$, and they vanish unless
$0\leq 2n+2d\leq 2d$. Moreover $H_{\mS}^{2n+2d}(\sX)\{n\}$ has $E$-height contained in $[0,2d]$; see
\cite[Theorem~1.8(6)]{BS21}. Thus, $H_{\mS}^{2n+2d}(\sX)\{n\}_{\tors}$ is in $\BKModinf^{2d,\varphi}$. By definition, $\BKModinf^{2d,\varphi}$ is a full subcategory of $\overline{\Mod}^{2d,\varphi}_{/\mS_\infty}$, so $H_{\mS}^{2n+2d}(\sX)\{n\}_{\tors}$ is in $\overline{\Mod}^{2d,\varphi}_{/\mS_\infty}$. By \cite[Theorem 5.11]{M21}, there is an isomorphism in $\BKMod^{2d,\varphi}$
\[
  H_{\mS}^{2n+2d}(\sX)\{n\} \simeq H_{\mS}^{2n+2d}(\sX)\{n\}_{\tors}\oplus H_{\mS}^{2n+2d}(\sX)\{n\}_{\free}.
  \] 
  Since the underlying $\mS$-module of $H_{\mS}^{2n+2d}(\sX)\{n\}_{\free}$ is finite free, it is in $\overline{\Mod}^{2d,\varphi}_{/\mS_1} \subset \overline{\Mod}^{2d,\varphi}_{/\mS_\infty}$. Since $\overline{\Mod}^{2d,\varphi}_{/\mS_\infty}$ is closed under extension, we obtain that 
  \[
    H_{\mS}^{2n+2d}(\sX)\{n\}
    \]
    is in $\overline{\Mod}^{2d,\varphi}_{/\mS_\infty}$. Using the exact sequence \eqref{filshort} and induction $n$, we get that $\fM_{-2d,n}$ is in $\overline{\Mod}^{2d,\varphi}_{/\mS_\infty}$. Since
\[
 2de<p-1,
\]
Proposition~\ref{modfieq} applies. Hence, for every $n$,
\begin{equation}\label{Mtors1}
  \fM_{-2d,n}
  \simeq \mS^{\oplus a} \oplus 
  \bigoplus_\lambda \mS/(p^{a_\lambda})
\end{equation}
for some finite multiset of positive integers $\{a_\lambda\}$ and a natural number $a$.

For $m=-2d-1$, by the same argument in the case $m=-2d$, for every $n$,
\begin{equation}\label{Mtors2}
  \fM_{-2d-1,n}
  \simeq\mS^{\oplus b} \oplus 
  \bigoplus_{\lambda'} \mS/(p^{b_{\lambda'}})
\end{equation}
for some finite multiset of positive integers $\{b_{\lambda'}\}$ and a natural number $b$.

By the isomorphism~\eqref{period}, we obtain that for every $m,n$,
\begin{equation}\label{Mtors}
  \fM_{m,n} 
  \simeq \mS^{\oplus a} \oplus 
  \bigoplus_\lambda \mS/(p^{a_\lambda})
\end{equation}
for some finite multiset of positive integers $\{a_\lambda\}$ and a natural number $a$.

We now prove that \eqref{filshort} splits. Put
\[
  H_{m,n}:=H_{\mS}^{2n-m}(\sX)\{n\}.
\]
Taking $p$-power torsion in \eqref{filshort} gives a left exact
sequence
\[
  0
  \to
  \fM_{m,n+1,\tors}
  \to
  \fM_{m,n,\tors}
  \to
  H_{m,n,\tors}.
\]
By Proposition~\ref{bkdegCflat}, after tensoring with $W(\sC^\flat)$ the
spectral sequence is split degenerate. In particular, the base change of
\eqref{filshort} to $W(\sC^\flat)$ is split exact. Therefore
Lemma~\ref{torssplit} gives a split exact sequence
\begin{equation}\label{torsrow}
  0
  \to
  \fM_{m,n+1,\tors}
  \to
  \fM_{m,n,\tors}
  \to
  H_{m,n,\tors}
  \to
  0.
\end{equation}

Next, by \cite[Theorem 5.11]{M21}, we have an abstract isomorphism
\[
  H_{\mS}^{q}(\sX)
  \simeq
  H_{\et}^{q}(X_{\sC},\Z_p)\otimes_{\Z_p}\mS.
\]
Since every finitely generated $\Z_p$-module is a direct sum of a free
$\Z_p$-module and cyclic modules $\Z_p/(p^a)$, it follows that
\[
  H_{m,n}
  \simeq
  H_{m,n,\tors}
  \oplus
  H_{m,n,\tf},
\]
where
\[
  H_{m,n,\tf}:=H_{m,n}/H_{m,n,\tors}
\]
is a finite free $\mS$-module (see \cite[Theorem 5.11]{M21}). Thus the natural map
\[
  H_{m,n}\to H_{m,n,\tf}
\]
is split.

Now apply Lemma~\ref{glsplit} to the short exact
sequence \eqref{filshort}. The torsion row is split by
\eqref{torsrow}, and the right-hand term $H_{m,n}$ has a split
torsion/free decomposition with torsion-free quotient finite free over
$\mS$. Hence the surjection
\[
  \fM_{m,n}\to H_{m,n}
\]
is split. Therefore \eqref{filshort} is split exact for all
$m,n$.
\end{proof}
\begin{thm}\label{hpdege}
Let $X$ be a smooth proper variety over $\sO_K$, and put $d:=\dim(X/\sO_K)$. Assume $2de<p-1$. Then the spectral sequence
\[
  E_2^{i,j}
  =
  H_{\dR}^{i-j}(X/\sO_K)
  \Longrightarrow
  \pi_{-i-j}\bigl(
    \HP(X/\sO_K;\Z_p)
  \bigr)
\]
is split degenerate. 
\end{thm}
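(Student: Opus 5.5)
We plan to deduce the statement from Theorem~\ref{bksplit} by base change along $\tilde\theta:\mS\to\sO_K$, $z\mapsto\pi$. Write $\fM_{m,n}:=\pi_m\fil_{\BMS}^n\TCn(\sX/\bS[z];\Z_p)[1/u]$. The first step is to identify the base change of the filtered object
\[
\fil_{\BMS}^*\TCn(\sX/\bS[z];\Z_p)[1/u]\otimes^{\L}_{\mS}\sO_K
\]
with $\fil^*\HP(\sX/\sO_K;\Z_p)$. Here $\HP(\sX/\sO_K;\Z_p)$ is equipped with its quasisyntomic (Antieau/BMS) filtration, whose graded pieces are Hodge-completed de Rham cohomology, that is, $R\Gamma_{\dR}(\sX/\sO_K)[2n]$ in the smooth case.

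On quasiregular semiperfectoids $S$ over $\sO_K$ we have $\THH(S/\bS[z])\otimes_{\THH(\sO_K/\bS[z])}\sO_K\simeq\HH(S/\sO_K)$. Taking $S^1$-Tate constructions, which commute with this base change on the relevant even pieces, and using $\TP(\sO_K/\bS[z])_*=\mS[\sigma^{\pm1}]\to\HP(\sO_K/\sO_K)_*=\sO_K[\sigma^{\pm1}]$, gives $\gr^n\TP(S/\bS[z])\otimes_{\mS}\sO_K\simeq\gr^n\HP(S/\sO_K)$. Quasisyntomic descent (\cite[Proposition 4.31]{BMS19}) then globalises this. To pass from $\TCn[1/u]$ to $\TP$, we will use the Frobenius-twisted identification \eqref{filfrobcan}. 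More economically, we can use the Bhatt--Scholze comparison $H^m_{\mS}(\sX)\otimes^{\L}_{\mS,\varphi}\mS\otimes_{\mS}\sO_K\simeq H^m_{\dR}$, together with the fact that $\varphi:\mS\to\mS$ is faithfully flat, so that twisting by $\varphi$ does not affect splitting.

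Finally, Proposition~\ref{hpformal} replaces $\sX$ by $X$. This identification of the base-changed filtered object is the step I expect to be the main obstacle. The delicate points are the interplay of $\varphi$, $u$ and $\sigma$, and the compatibility of Tate constructions with $\otimes_{\mS}\sO_K$. If direct identification fails, I will fall back on \cite[Theorem 1.8]{BS22} and the BMS identification of $\gr^*\HP$ with de Rham cohomology. I will then use the fact that the filtrations are finite in each degree, so it suffices to identify graded pieces compatibly.

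Granting this identification, split degeneracy follows formally. By Theorem~\ref{bksplit} and Proposition~\ref{modfieq}, each $\fM_{m,n}$, and each $H_{\mS}^{2n-m}(\sX)\{n\}$ (through \cite[Theorem 5.11]{M21}), has underlying module $\mS^{\oplus a}\oplus\bigoplus\mS/p^{a_\lambda}$. The short exact sequences \eqref{filshort} are split over $\mS$. Each of these modules is $\tilde\theta$-Tor-independent: $E$ is a nonzerodivisor on $\mS/p^a$, since $E\equiv z^e$ modulo $p$. Hence the derived base change has homotopy $\fM_{m,n}\otimes_{\mS}\sO_K$ concentrated in a single degree, with no $\Tor_1$ contributions. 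Applying $-\otimes_{\mS}\sO_K$ to the split sequences \eqref{filshort} therefore yields split short exact sequences
\[
0\to\pi_m\fil^{n+1}\HP\to\pi_m\fil^n\HP\to H^{2n-m}_{\dR}(X/\sO_K)\to0 .
\]
Here we use that $H^{q}_{\mS}(\sX)\otimes_{\mS}\sO_K\simeq H^q_{\dR}(X/\sO_K)$, which is exact precisely because of the absence of $\Tor_1$; the Breuil--Kisin twist $\{n\}$ is invisible after base change.

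These split sequences show that $\pi_m\fil^n\HP\to\pi_m\HP$ is injective and split for all $m,n$. This is exactly split degeneracy in the sense of the definition in \S3.2, and it gives the stated decomposition $\HP_j\simeq\bigoplus_i H^{-j+2i}_{\dR}$.
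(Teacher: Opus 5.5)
Your argument is sound in outline, but it follows a different route from the paper's. It also has one step that has to be set up differently from how you first state it.

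\textbf{How the routes differ.} The paper never compares filtered objects. It shows that \eqref{tcnhpcmp} is an equivalence by the AMN18 rigidity argument for symmetric monoidal transformations on smooth proper dg categories. From this and Theorem~\ref{bksplit} (no $E$-torsion) it extracts only an abstract isomorphism $\HP_n(X/\sO_K;\Z_p)\simeq\bigoplus_i H^{-n+2i}_{\dR}(X/\sO_K)$. It then lets the DVR criterion Proposition~\ref{sscrit}(2) over $\sO_K$ turn that abstract isomorphism into split degeneration.

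You instead carry the split sequences \eqref{filshort} themselves through a Tor-independent base change. This frees you from Proposition~\ref{sscrit}, including its hypothesis that the spectral sequence degenerates after inverting $p$. It also gives the finer conclusion that the splittings on $\HP$ are specializations of those on $\TCn[1/u]$. The price is that you need a comparison of filtered objects, and there the Frobenius twist matters. The paper's argument is insensitive to the twist, because $\varphi^*\mS\simeq\mS$ and $\varphi^*(\mS/p^a)\simeq\mS/p^a$, so twisting never changes abstract isomorphism classes.

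\textbf{The step that needs fixing.} The identification you first write down, the base change of $\fil^*_{\BMS}\TCn(\sX/\bS[z];\Z_p)[1/u]$ along $\tilde\theta$, is not $\fil^*\HP$. Since $E=uv$ and $u$ is a unit there, this base change is $\TCn[1/u]/v$, that is, $\THH(\sX/\bS[z];\Z_p)[1/u]$. Its graded pieces are $H^*_{\mS}(\sX)\otimes_{\mS,\tilde\theta}\sO_K$, which is Hodge--Tate cohomology. De Rham cohomology is instead the specialization along $\tilde\theta\circ\varphi$.

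The clean fix is to go through $\TP$. Equivariantly $\HH(-/\sO_K)\simeq\THH(-/\bS[z])/u$, with $u\in\pi_2\TCn(\sO_K/\bS[z];\Z_p)$ and $\can(u)=E\sigma$. Hence $\fil^*_{\BMS}\HP(\sX/\sO_K;\Z_p)\simeq\fil^*_{\BMS}\TP(\sX/\bS[z];\Z_p)/E$, with no need to argue about Tate constructions commuting with base change on quasiregular semiperfectoids.

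You then need the $\mS$-analogue of \eqref{ainflin}: the linearized Frobenius
$\pi_m\fil^n_{\BMS}\TCn(\sX/\bS[z];\Z_p)[1/u]\otimes_{\mS,\varphi}\mS\to\pi_m\fil^n_{\BMS}\TP(\sX/\bS[z];\Z_p)$
must be an isomorphism for all $m,n$. Proposition~\ref{bkgr} only gives this after inverting $E$, so it is an extra input you must supply. By descending induction on $n$ and the five lemma, it reduces to the statement on graded pieces, where both sides are identified with $\varphi^*H^*_{\mS}(\sX)$.

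Once you have it, $\varphi^*$ is exact and preserves both the split sequences and the shape $\mS^{\oplus a}\oplus\bigoplus_\lambda\mS/p^{a_\lambda}$. Your Tor-independence argument ($E$ is a nonzerodivisor on $\mS/p^a$) then finishes the proof as you describe.
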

\begin{proof}
Put $\sT:=\perf(X)$. The map $\bS[z]\to \sO_K$ sending $z$ to $\pi$ induces a morphism
\begin{equation}\label{tcnhpcmp}
\TCn(\sT/\bS[z];\Z_p)\otimes^{\L}_{\TP(\sO_K/\bS[z];\Z_p)} \HP(\sO_K/\sO_K;\Z_p)
\longrightarrow
\HP(\sT/\sO_K;\Z_p).
\end{equation}
By \cite[Proposition 2.16]{M26}, the left-hand side of \eqref{tcnhpcmp} defines a symmetric monoidal functor from the category of smooth proper $\sO_K$-linear dg categories to $\Mod_{\HP(\sO_K/\sO_K;\Z_p)}(\Sp)$. On the other hand, for any idempotent-complete small smooth proper $\sO_K$-linear dg category $\sD$, the object $\HH(\sD/\sO_K;\Z_p)$ is dualizable in $\Mod_{H\sO_K}(\Sp^{BS^1})$, and by \cite[Theorem 2.15]{AMN18}, it is in fact perfect. It follows that the right-hand side of \eqref{tcnhpcmp} also defines a symmetric monoidal functor from the category of smooth proper $\sO_K$-linear dg categories to $\Mod_{\HP(\sO_K/\sO_K;\Z_p)}(\Sp)$; see \cite{AMN18}. Thus, \eqref{tcnhpcmp} defines a symmetric monoidal natural transformation between these two functors. Hence \eqref{tcnhpcmp} is an equivalence by \cite[Proposition 4.6]{AMN18}. 

By Theorem~\ref{bksplit}, the filtered object
$\TCn(\sX/\bS[z];\Z_p)[1/u]$ is split over $\mS$. In particular, the relevant homotopy groups have no $E$-torsion. Thus, on homotopy groups of the equivalence \eqref{tcnhpcmp}, we obtain
\begin{equation}\label{hpbase}
\TCn_n(\sX/\bS[z];\Z_p)[1/u] \otimes_{\mS}\sO_K \simeq \HP_n(X/\sO_K;\Z_p).
\end{equation}
Similarly, using \cite[Theorem 5.11]{M21}, we know $H_{\mS}^*(\sX)$ has no $E$-torsion. Thus, using \cite[Theorem 1.2]{BMS19}, we know
\[
  H_{\mS}^m(\sX) \otimes_{\mS}\sO_K \simeq H_{\dR}^m(X/\sO_K).
  \]
  Using Theorem~\ref{bksplit}, we obtain 
  \[
    \HP_n(X/\sO_K;\Z_p) \simeq \bigoplus_{i\in\Z}H_{\dR}^{-n+2i}(X/\sO_K).
    \]
    and by Proposition~\ref{sscrit}, we obtain the claim.
\end{proof}

\begin{theorem}\label{ainftpsplit}
 Let $X$ be a smooth proper variety over $\sO_K$, and put $d:=\dim(X/\sO_K)$. Assume $2de<p-1$. Then the $\Ainf$-to-$\TP$ spectral sequence is split degenerate. In
  particular, there is an abstract isomorphism of $\Ainf$-modules
  \[
    \TP_j(X;\Z_p)
    \simeq
    \bigoplus_{i\in \Z}
    H_{\Ainf}^{-j+2i}(X).
  \]
\end{theorem}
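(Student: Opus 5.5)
Mirroring the proof of Theorem~\ref{hpdege}, we specialize the split Breuil--Kisin filtration of Theorem~\ref{bksplit} along $\phi\colon\mS\to\Ainf$ (or its Frobenius twist $\ol\phi$, whichever matches the comparison \cite[Theorem 1.2]{BMS19}).

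\textbf{Step 1: base-change equivalence.} With $\sT=\perf(X)$, consider
\[
\TCn(\sT/\bS[z];\Z_p)\otimes^{\L}_{\TP(\sO_K/\bS[z];\Z_p)}\TP(\sO_\sC;\Z_p)\longrightarrow \TP(\sT_{\sO_\sC};\Z_p).
\]
Both sides are symmetric monoidal functors on smooth proper $\sO_K$-linear dg categories, by \cite[Proposition 2.16]{M26} and the argument of Proposition~\ref{tplength}. Hence the map is an equivalence by \cite[Proposition 4.6]{AMN18}. By Proposition~\ref{tptcformal} we may pass freely between $X$ and $\sX$, and $\TCn[1/u]$ agrees with $\TCn$ in large degrees.

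\textbf{Step 2: homotopy groups and graded pieces.} Taking homotopy groups gives a Künneth/Tor spectral sequence. By Theorem~\ref{bksplit}, each $\fM_{m,n}$ is a direct sum of copies of $\mS$ and $\mS/p^a$. Since $\mS\to\Ainf$ is flat ($\Ainf$ is $(p,z)$-completely flat and the modules are finitely presented), the higher Tor terms vanish, so
\[
\TP_j(X;\Z_p)\simeq \TCn_j(\sX/\bS[z];\Z_p)[1/u]\otimes_{\mS}\Ainf .
\]
The same argument applies to the whole filtration $\fil_\BMS^*$, which is compatible with base change on quasiregular semiperfectoids because the graded pieces are prismatic cohomology with Breuil--Kisin twists. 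On graded pieces, the Breuil--Kisin prism $(\mS,E)\to(\Ainf,\xi)$ base change theorem \cite[Theorem 1.2]{BMS19}, \cite{BS22} gives $H^m_\mS(\sX)\otimes_\mS\Ainf\simeq H^m_{\Ainf}(\sX_{\sO_\sC})$, and the twists trivialize after choosing $\sigma$.

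\textbf{Step 3: transfer the splitting.} Split short exact sequences \eqref{filshort} stay split exact after $-\otimes_\mS\Ainf$, so each $\pi_j\fil^n\TP\to\pi_j\TP$ is split injective. This gives
\[
\TP_j(X;\Z_p)\simeq\bigoplus_i H^{-j+2i}_{\Ainf}(X).
\]
Note that Proposition~\ref{sscrit} is unavailable here, since $\Ainf$ is not a DVR; the splitting must instead be transported directly, which is why we work with the split sequences themselves.

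\textbf{Main obstacle.} The hardest step is identifying the BMS filtration on $\TP(X_{\sO_\sC};\Z_p)$ with the base change of the filtration on $\TCn(\sX/\bS[z];\Z_p)[1/u]$, including the map used (Frobenius versus can). I would verify this on quasiregular semiperfectoid covers $S$ of $\sX$, where $S\widehat\otimes\sO_\sC$ is again quasiregular semiperfectoid and all terms are even. There, the identification reduces to the base change of prismatic cohomology $\Delta_{S/\mS}\widehat\otimes_\mS\Ainf\simeq\Delta_{S_{\sO_\sC}}$, followed by descent using completeness and finiteness of the filtrations.
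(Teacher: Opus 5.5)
Your route is the paper's: specialize the split filtration of Theorem~\ref{bksplit} to $\Ainf$, and identify $\pi_j\fil^n_{\BMS}\TCn(\sX/\bS[z];\Z_p)[1/u]\otimes_{\mS}\Ainf$ with $\pi_j\fil^n_{\BMS}\TP(\sX_{\sO_\sC};\Z_p)$. Then use that split short exact sequences stay split. You are right that Proposition~\ref{sscrit} is unavailable over $\Ainf$; the paper also transports the splitting directly. The specialization must be along $\ol\phi$. This is because the comparison map $\fil^*_{\BMS}\TCn(\sX/\bS[z];\Z_p)[1/u]\to\fil^*_{\BMS}\TP(\sX/\bS[z];\Z_p)\to\fil^*_{\BMS}\TP(\sX_{\sO_\sC};\Z_p)$ passes through $\varphi^{h\mathbb{T}}$.

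The weak point is the filtered identification, which you flag yourself, and neither justification you offer works as written.

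\emph{Step~1.} The symmetric monoidal argument only sees the unfiltered invariant as a functor of dg categories. $\fil^*_{\BMS}$ is not such a functor, so ``the same argument applies to the whole filtration'' is not a proof. In fact Step~1 is not needed at all.

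\emph{The qrsp-local plan.} This is the wrong tool. The map you must compare involves $\varphi^{h\mathbb{T}}$, not a plain base change $\Delta_{S/\mS}\widehat{\otimes}_{\mS}\Ainf\simeq\Delta_{S_{\sO_\sC}}$. Also, to return to $\sX$ you would have to commute $-\otimes_{\mS}\Ainf$ and the inversion of $u$ with the descent limit.

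\emph{The fix.} The paper obtains \eqref{ainflin} globally from the graded comparison, as follows.
\begin{enumerate}
\item The $\ol\phi$-linearization of the filtered map is an isomorphism on graded pieces by \cite[Theorem 1.2]{BMS19}.
\item Since $\mS\to\Ainf$ is flat, the homotopy long exact sequences of the source stay exact after base change. This gives a map of long exact sequences.
\item Both sides satisfy $\pi_j\fil^n=0$ once $2n-j>2d$, by vanishing of $H^{>2d}$ together with completeness of the filtrations.
\item Descending induction on $2n-j$, using the five lemma, then gives \eqref{ainflin} for all $j,n$. Taking $n\ll 0$ recovers the unfiltered statement, which is why Step~1 is superfluous.
\end{enumerate}
With this in place, your Step~3 completes the proof.
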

\begin{proof}
By Theorem~\ref{bksplit}, the Breuil--Kisin spectral sequence
\begin{equation}\label{bkssainf}
  E_2^{i,j}
  =
  H_{\mS}^{i-j}(\sX)\{j\}
  \Longrightarrow
  \pi_{-i-j}\bigl(
    \TCn(\sX/\bS[z];\Z_p)[1/u]
  \bigr)
\end{equation}
is split degenerate. Equivalently, for every $j,n$ the sequence
\[
  0
  \to
  \pi_j\fil_{\BMS}^{n+1}
  \TCn(\sX/\bS[z];\Z_p)[1/u]
  \to
  \pi_j\fil_{\BMS}^{n}
  \TCn(\sX/\bS[z];\Z_p)[1/u]
  \to
  H_{\mS}^{2n-j}(\sX)\{n\}
  \to
  0
\]
is split exact. Hence, since the filtration is finite in each homotopy
degree,
\begin{equation}\label{bksumainf}
  \pi_j\fil_{\BMS}^{n}
  \TCn(\sX/\bS[z];\Z_p)[1/u]
  \simeq
  \bigoplus_{i\geq n}
  H_{\mS}^{2i-j}(\sX)\{i\}.
\end{equation}

We now compare this filtered object with the $\Ainf$-filtered object. We use
the natural morphisms of filtered spectra
\[
  \fil_{\BMS}^{*}
  \TCn(\sX/\bS[z];\Z_p)[1/u]
  \longrightarrow
  \fil_{\BMS}^{*}
  \TP(\sX/\bS[z];\Z_p)
  \longrightarrow
  \fil_{\BMS}^{*}
  \TP(\sX_{\sO_{\sC}};\Z_p).
\]
On homotopy groups this map is $\varphi$-semilinear with respect to the
$\mS$-module structure on
\[
  \pi_j\fil_{\BMS}^{n}
  \TCn(\sX/\bS[z];\Z_p)[1/u].
\]
Therefore its $\Ainf$-linearization is the map
\begin{equation}\label{ainflin}
  \pi_j\fil_{\BMS}^{n}
  \TCn(\sX/\bS[z];\Z_p)[1/u]
  \otimes_{\mS,\ol{\phi}}\Ainf
  \longrightarrow
  \pi_j\fil_{\BMS}^{n}
  \TP(\sX_{\sO_{\sC}};\Z_p).
\end{equation}
By the comparison theorem between Breuil--Kisin cohomology and $\Ainf$
cohomology \cite[Theorem 1.2]{BMS19}, together with the compatibility of the above morphism with
graded pieces, \eqref{ainflin} is an isomorphism for
all $j,n$. Tensoring the split filtration \eqref{bksumainf} over
$\mS$ with $\Ainf$ and using the comparison
\eqref{ainflin}, we get the claim.
\end{proof}


\begin{lemma}\label{qdetect}
Let
\[
  R=\Z[S^{-1}],\qquad
  \Lambda=R[[q-1]].
\]
For a prime $\ell\notin S$, put
\[
  \Lambda_\ell:=\Lambda\widehat\otimes_R\Z_\ell
  \simeq
  \Z_\ell[[q-1]].
\]
Let $f:M\to N$ be a morphism of finitely generated $\Lambda$-modules.
Assume that
\[
  f\otimes_\Lambda \Lambda_\ell=0
\]
for every prime $\ell\notin S$. Then $f=0$.
\end{lemma}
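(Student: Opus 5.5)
The plan is to reduce to the image of $f$ and use Nakayama at the maximal ideals of $\Lambda$. Throughout I assume, as in the intended application $S=\{\text{primes dividing }(2d+1)!\}$, that $S$ is a finite set of primes. Put $I:=\im(f)\subset N$. Since $\Lambda=R[[q-1]]$ is noetherian, $I$ is a finitely generated $\Lambda$-module. The goal is to show $I=0$.

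\textbf{Step 1: reduce to $I\otimes_\Lambda\Lambda_\ell=0$.} For each $\ell\notin S$ we have $\Lambda/\ell^n\Lambda\simeq (\Z/\ell^n)[[q-1]]$. Hence $\Lambda_\ell$ is the $\ell$-adic completion of the noetherian ring $\Lambda$, and so $\Lambda_\ell$ is flat over $\Lambda$. By flatness, $M\otimes_\Lambda\Lambda_\ell\to N\otimes_\Lambda\Lambda_\ell$ has image $I\otimes_\Lambda\Lambda_\ell$. The hypothesis $f\otimes_\Lambda\Lambda_\ell=0$ therefore gives $I\otimes_\Lambda\Lambda_\ell=0$. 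Since $\Lambda_\ell/\ell\simeq\Lambda/\ell$, tensoring further gives
\[
  I/\ell I \simeq (I\otimes_\Lambda\Lambda_\ell)/\ell=0
  \qquad\text{for every prime }\ell\notin S.
\]

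\textbf{Step 2: identify the maximal ideals of $\Lambda$.} The element $q-1$ lies in the Jacobson radical of $\Lambda=R[[q-1]]$, because $1+(q-1)g$ is a unit for every $g\in\Lambda$. Hence every maximal ideal $\mathfrak m\subset\Lambda$ contains $q-1$ and corresponds to a maximal ideal of $\Lambda/(q-1)=R=\Z[S^{-1}]$. Since $S$ is finite, $R$ is not a field, and its maximal ideals are exactly $(\ell)$ for primes $\ell\notin S$. Thus every maximal ideal of $\Lambda$ has the form $\mathfrak m=(\ell,q-1)$ with $\ell\notin S$.

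\textbf{Step 3: conclude by Nakayama.} For such $\mathfrak m=(\ell,q-1)$, the module $I/\mathfrak mI$ is a quotient of $I/\ell I$, which vanishes by Step 1. Localizing, $I_{\mathfrak m}/\mathfrak m I_{\mathfrak m}=0$, and Nakayama's lemma over the local ring $\Lambda_{\mathfrak m}$ gives $I_{\mathfrak m}=0$. Since this holds at every maximal ideal of $\Lambda$, we get $I=0$, and hence $f=0$.

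\textbf{Main obstacle.} The key points are the flatness of $\Lambda_\ell$ over $\Lambda$ in Step 1, which needs $\Lambda$ noetherian so that completion is flat, and the description of the maximal ideals in Step 2. The latter is exactly where finiteness of $S$ is used: if $R$ were $\Q$, the ideal $(q-1)$ would be maximal and not detected by any $\Lambda_\ell$, and the statement would fail.
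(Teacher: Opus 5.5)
Your proof is correct and follows essentially the paper's route: pass to $I=\im(f)$, use that every maximal ideal of $\Lambda$ has the form $(\ell,q-1)$ with $\ell\notin S$, and use flatness of $\Lambda\to\Lambda_\ell$ to get $I\otimes_\Lambda\Lambda_\ell=\im(f\otimes_\Lambda\Lambda_\ell)=0$. The only difference is cosmetic: you finish with $I/\ell I=0$ and Nakayama at $(\ell,q-1)$, whereas the paper invokes faithful flatness of $\Lambda_{\mathfrak m_\ell}\to\Lambda_\ell$; your explicit observation that $R$ must not be a field (so that $(q-1)$ is not maximal) also makes precise a hypothesis the paper uses tacitly.
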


\begin{proof}
Let $I:=\operatorname{im}(f)$. It is enough to show that $I=0$.
If $I\neq 0$, then since $\Lambda$ is noetherian, $I$ has a maximal ideal in
its support. Every maximal ideal of
\[
  \Lambda=R[[q-1]]
\]
is of the form
\[
  \mf{m}_\ell=(\ell,q-1)
\]
for some prime $\ell\notin S$. Hence
\[
  I_{\mf{m}_\ell}\neq 0
\]
for some $\ell\notin S$.

Let us denote by $\Lambda_{\mf{m}_\ell}$ the localization of $\Lambda$ at $\mf{m}_\ell$. Then the natural map
\[
  \Lambda_{\mf{m}_\ell}\longrightarrow \Lambda_\ell
\]
is faithfully flat; indeed $\Lambda_\ell\simeq \Z_\ell[[q-1]]$ is the
$\mf{m}_\ell$-adic completion of the noetherian local ring
$\Lambda_{\mf{m}_\ell}$. Therefore
\[
  I_{\mf{m}_\ell}\otimes_{\Lambda_{\mf{m}_\ell}}\Lambda_\ell
  \neq 0.
\]
But this is a subquotient of
\[
  I\otimes_\Lambda\Lambda_\ell
  =
  \operatorname{im}(f\otimes_\Lambda\Lambda_\ell),
\]
which is zero by assumption. This contradiction proves $I=0$.
\end{proof}

\begin{lemma}\label{qsplitlocal}
Let
\[
  R=\Z[S^{-1}],\qquad
  \Lambda=R[[q-1]],
\]
and for each prime $\ell\notin S$ put
\[
  \Lambda_\ell:=\Lambda\widehat\otimes_R\Z_\ell
  \simeq
  \Z_\ell[[q-1]].
\]
Let
\[
  0\to A\to B\to C\to 0
\]
be a short exact sequence of finitely generated $\Lambda$-modules. Assume
that for every prime $\ell\notin S$, the base changed sequence
\[
  0
  \to
  A\otimes_\Lambda\Lambda_\ell
  \to
  B\otimes_\Lambda\Lambda_\ell
  \to
  C\otimes_\Lambda\Lambda_\ell
  \to
  0
\]
is split exact. Then the original sequence is split exact.
\end{lemma}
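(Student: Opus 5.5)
The plan is to reduce splitting to the vanishing of an extension class, and to detect that vanishing one prime at a time using Lemma~\ref{qdetect}. Consider the map
\[
  \beta_*:\ \operatorname{Hom}_\Lambda(C,B)\longrightarrow \operatorname{Hom}_\Lambda(C,C),
\]
and let $Q$ be its cokernel. The sequence splits exactly when $\mathrm{id}_C$ lies in the image of $\beta_*$, i.e.\ when the class $[\mathrm{id}_C]\in Q$ vanishes. Equivalently, $Q\hookrightarrow\Ext^1_\Lambda(C,A)$ and $[\mathrm{id}_C]$ maps to the extension class. So the goal is to show $[\mathrm{id}_C]=0$ in $Q$.

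First, $Q$ is finitely generated. Indeed, $\Lambda=R[[q-1]]$ is noetherian, being a power series ring over a localization of $\Z$. Since $C$ and $B$ are finitely generated, both Hom modules are finitely generated.

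Second, we check that forming $Q$ commutes with base change along $\Lambda\to\Lambda_\ell$. This map is flat: it factors as the completion $\Lambda\to\widehat\Lambda_{(\ell)}$ followed by an $\ell$-adic completion, both flat for noetherian rings. Because $C$ is finitely presented and $\Lambda_\ell$ is flat over $\Lambda$, we have
\[
  \operatorname{Hom}_\Lambda(C,N)\otimes_\Lambda\Lambda_\ell\simeq\operatorname{Hom}_{\Lambda_\ell}(C_{\Lambda_\ell},N_{\Lambda_\ell}).
\]
Right exactness of $\otimes$ then gives $Q\otimes_\Lambda\Lambda_\ell\simeq \Coker(\beta_{\ell,*})$, where $\beta_\ell:=\beta\otimes_\Lambda\Lambda_\ell$. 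The hypothesis that the base-changed sequence splits says precisely that $[\mathrm{id}_{C_{\Lambda_\ell}}]=0$ in this cokernel.

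Now apply Lemma~\ref{qdetect} to the map $f:\Lambda\to Q$ sending $1\mapsto[\mathrm{id}_C]$. Its base change $f\otimes_\Lambda\Lambda_\ell$ is the map $1\mapsto[\mathrm{id}_{C_{\Lambda_\ell}}]$, which is zero for every $\ell\notin S$. The lemma therefore gives $f=0$, so $\mathrm{id}_C$ lifts to some $s:C\to B$ with $\beta\circ s=\mathrm{id}_C$. The given sequence is already exact, so this shows it is split exact.

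The main point to verify carefully is the flatness of $\Lambda\to\Lambda_\ell$ and the identification $\Lambda_\ell\simeq\Lambda\widehat\otimes_R\Z_\ell$, which are needed so that Hom commutes with base change. This is the same input already used in the proof of Lemma~\ref{qdetect}, where $\Lambda_\ell$ appears as the $\mf m_\ell$-adic completion. If one prefers to avoid global flatness, one can instead localize at $\mf m_\ell$ first, where flatness is standard, and argue exactly as in Lemma~\ref{qdetect} via supports at maximal ideals.
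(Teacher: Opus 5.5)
Your proof is correct and is essentially the paper's argument. The paper localizes the extension class $\xi\in\Ext^1_\Lambda(C,A)$ at each maximal ideal $\mf{m}_\ell=(\ell,q-1)$ and kills it by faithful flatness of $\Lambda_{\mf{m}_\ell}\to\Lambda_\ell$. You instead pass to the submodule $\Coker(\beta_*)\subset\Ext^1_\Lambda(C,A)$, so only $\operatorname{Hom}$ (not $\Ext^1$) has to commute with the flat base change, and you let Lemma~\ref{qdetect} perform the same local step.

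One thing to tidy is the flatness of $\Lambda\to\Lambda_\ell$. Just note that $\Lambda_\ell\simeq\Z_\ell[[q-1]]$ is the $\ell$-adic (equivalently $\mf{m}_\ell$-adic) completion of the noetherian ring $\Lambda$, rather than using the unclear two-step factorization through $\widehat\Lambda_{(\ell)}$.
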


\begin{proof}
Let
\[
  \xi\in \operatorname{Ext}^1_\Lambda(C,A)
\]
be the extension class. Since $\Lambda$ is noetherian and $A,C$ are
finitely generated, $\operatorname{Ext}^1_\Lambda(C,A)$ is a finitely
generated $\Lambda$-module.

We show that $\xi=0$. It is enough to show that $\xi$ vanishes after
localizing at every maximal ideal of $\Lambda$. Every maximal ideal of
$\Lambda=R[[q-1]]$ is of the form
\[
  \mf{m}_\ell=(\ell,q-1)
\]
for some $\ell\notin S$.

Fix such an $\ell$. By assumption, the sequence becomes split after base
change to
\[
  \Lambda_\ell\simeq \Z_\ell[[q-1]].
\]
Equivalently, the image of
\[
  \xi_{\mf{m}_\ell}
  \in
  \operatorname{Ext}^1_{\Lambda_{\mf{m}_\ell}}
  (C_{\mf{m}_\ell},A_{\mf{m}_\ell})
\]
vanishes after tensoring with $\Lambda_\ell$. Since
\[
  \Lambda_{\mf{m}_\ell}\to \Lambda_\ell
\]
is faithfully flat, this implies
\[
  \xi_{\mf{m}_\ell}=0.
\]
Thus $\xi$ vanishes after localization at every maximal ideal of $\Lambda$.
Therefore $\xi=0$, and the short exact sequence splits.
\end{proof}

The factorial in the following theorem is used only to ensure that every prime
$\ell$ not inverted in the coefficient ring satisfies both $\ell\nmid N$ and
$\ell>2d+1$.  After $\ell$-adic completion this puts us in the unramified
Breuil--Kisin situation over $\Z_\ell$ and gives the low-dimension
inequality $2d<\ell-1$ needed to apply Theorem~\ref{ainftpsplit}.

\begin{theorem}\label{qtpsplit}
Let $N$ be a positive even integer. Let $X$ be a smooth proper variety over $\Z[1/N]$, and put $d:=\dim(X/\Z)$. Assume that there exists a derived scheme $\mathcal X$ over $\bS[1/N]$ together with an equivalence
\[
  \mathcal X\otimes_{\bS}\Z\simeq X.
\]
Then, after inverting $(2d+1)!$, the $S^1$-equivariant even filtration on $\TP(\mathcal X\otimes\ku/\ku)$ is split degenerate. In particular,
\[
  \pi_j\TP(\mathcal X\otimes\ku/\ku)\Bigl[\frac{1}{(2d+1)!}\Bigr]
  \simeq
  \bigoplus_{i\in\Z}H_{\qdR}^{-j+2i}(X/\Z)\Bigl[\frac{1}{(2d+1)!}\Bigr].
\]
\end{theorem}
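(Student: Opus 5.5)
We reduce the global statement over $\Lambda:=\Z[1/((2d+1)!)][[q-1]]$ to the $\ell$-adic split degeneration of Theorem~\ref{ainftpsplit}, one prime $\ell$ at a time. We then glue the local splittings using Lemmas~\ref{qdetect} and~\ref{qsplitlocal}. Set $S$ to be the set of primes dividing $(2d+1)!$. Since $N$ is even and we have inverted $N$, every prime dividing $N$ is at most $2d+1$ or is already invertible on $X$. After adjusting $S$ to contain the primes dividing $N$ if necessary, which is harmless because those primes are already units on $X$, every $\ell\notin S$ satisfies $\ell\nmid N$ and $\ell>2d+1$, i.e.\ $2d<\ell-1$.

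\textbf{Step 1: identification after $\ell$-completion.} By Wagner's description of the $S^1$-equivariant even filtration on $\TP(\mathcal X\otimes\ku/\ku)$, the graded pieces are the shifted (derived) $q$-de Rham cohomologies $H^{*}_{\qdR}(X/\Z)$, and the homotopy groups are $\Z[[q-1]]$-modules. After inverting $(2d+1)!$ these become finitely generated $\Lambda$-modules, because $X$ is smooth proper. For $\ell\notin S$ we $\ell$-complete. Then $X_{\Z_\ell}$ is smooth proper over $W=\Z_\ell$ with $e=1$, and Wagner's comparison identifies the $\ell$-completed even filtration with the BMS filtration on the $q$-de Rham prism $\Z_\ell[[q-1]]$. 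Concretely, $\TP(\mathcal X\otimes\ku/\ku)^\wedge_\ell$ is obtained from $\TCn(\sX/\bS[z];\Z_\ell)[1/u]$, or directly from $\TP(\sX_{\sO_\sC};\Z_\ell)$, by base change along the prism map into $\Z_\ell[[q-1]]\to\Ainf$, $q\mapsto[\epsilon]$. Under this base change the graded pieces go to $H^*_{\qdR}\otimes\Lambda_\ell$. I would use the Breuil--Kisin split degeneration of Theorem~\ref{bksplit} with $e=1$, which needs $2d<\ell-1$. I would then specialize along a map of prisms $\mS\to \Z_\ell[[q-1]]$ when $e=1$, with $z\mapsto q-1$ and $E=z+\ell$ replaced by the $q$-analogue $[\ell]_q$ via the $q$-crystalline/prismatic base change. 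Split exact sequences stay split, and the freeness and $E$-torsion-freeness from \cite[Theorem 5.11]{M21} ensure that homotopy groups commute with the base change, exactly as in the proof of Theorem~\ref{hpdege}. The result is that for each $\ell\notin S$ the sequences
\[
0\to \pi_j\fil^{n+1}\otimes_\Lambda\Lambda_\ell\to\pi_j\fil^n\otimes_\Lambda\Lambda_\ell\to H^{2n-j}_{\qdR}(X/\Z)\otimes_\Lambda\Lambda_\ell\to0
\]
are split exact. Here we use that $\Lambda\to\Lambda_\ell$ is flat and that the even filtration commutes with $\ell$-completion on finitely generated pieces.

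\textbf{Step 2: globalize.} First, degeneration: every differential $d_r$ becomes zero after $\otimes_\Lambda\Lambda_\ell$ for all $\ell\notin S$, so Lemma~\ref{qdetect}, applied inductively on $r$ as in Lemma~\ref{degdetect}, gives $d_r=0$. Hence the sequences above are exact over $\Lambda$. Second, Lemma~\ref{qsplitlocal} upgrades the local splittings to a global splitting of each short exact sequence. Finiteness of the filtration in each degree then yields the displayed decomposition of $\pi_j\TP(\mathcal X\otimes\ku/\ku)[1/(2d+1)!]$.

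\textbf{Main obstacle.} The main obstacle is Step 1: making precise the compatibility between Wagner's even filtration, after $\ell$-completion, and the BMS filtration base-changed from the Breuil--Kisin prism (or from $\Ainf$) to the $q$-de Rham prism. The key point is that this base change is exact on the relevant homotopy groups. My first attempt would be to base-change from $\Ainf$ using Theorem~\ref{ainftpsplit} together with faithful flatness of $\Z_\ell[[q-1]]\to\Ainf$, $q\mapsto[\epsilon]$. Since this map is $(\ell,q-1)$-completely faithfully flat, a split sequence over $\Ainf$ whose terms are base changes of finitely generated modules over $\Lambda_\ell$ splits over $\Lambda_\ell$, by the same faithful-flatness argument as in Lemma~\ref{qsplitlocal}.
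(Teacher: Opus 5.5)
Your Step~2, together with the $\Ainf$-route you sketch under ``Main obstacle'', is the paper's proof. The paper maps $\fil_{\ev}^*\TP(\mathcal X\otimes\ku/\ku)$ to $\fil^*_{\BMS}\TP(X_{\sO_{\mathbb C_\ell}};\Z_\ell)$ and uses the base-change isomorphism on $\pi_m\fil^n$ along $\Lambda_\ell\to\Ainf^{(\ell)}$. It then applies Theorem~\ref{ainftpsplit} with $e=1$ and $2d<\ell-1$. The vanishing of the connecting maps and the splittings are descended along the faithfully flat map $\Lambda_\ell\to\Ainf^{(\ell)}$. Finally it globalizes with Lemmas~\ref{qdetect} and~\ref{qsplitlocal} over $\Z[1/N,1/(2d+1)!][[q-1]]$.

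You should, however, drop the Breuil--Kisin alternative in Step~1. There is no map of prisms $(\mS,(E))\to(\Z_\ell[[q-1]],([\ell]_q))$ with $z\mapsto q-1$, for two reasons. First, it is not a $\delta$-map: $\varphi(z)=z^\ell$, but $\varphi(q-1)=q^\ell-1\neq(q-1)^\ell$. Second, $E(q-1)=q-1+\ell$ does not generate $([\ell]_q)$: modulo $\ell$ these are $q-1$ and $(q-1)^{\ell-1}$, with $\ell\geq 3$. So Theorem~\ref{bksplit} cannot simply be specialized to the $q$-de Rham prism.

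Similarly, the $q$-de Rham filtered object is not obtained from $\TP(\sX_{\sO_\sC};\Z_\ell)$ by base change; the base change goes the other way. That is exactly why the argument has to descend along the faithfully flat map $\Z_\ell[[q-1]]\to\Ainf$, $q\mapsto[\epsilon]$, on finitely generated modules, as you ultimately propose.
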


\begin{proof}
Put
\[
  d:=\dim(X/\Z)
\]
and
\[
  R:=\Z\Bigl[\frac{1}{N},\frac{1}{(2d+1)!}\Bigr],
  \qquad
  \Lambda:=R[[q-1]].
\]
Let
\[
  \fil_{\ev}^{*}
  \TP(\mathcal X\otimes\ku/\ku)
\]
denote the $S^1$-equivariant even filtration; see \cite{HRW22} and \cite{P25}. For fixed $j,n$, set
\[
  M_{j,n}:=
  \pi_j\fil_{\ev}^{n}
  \TP(\mathcal X\otimes\ku/\ku)
  \otimes_{\Z[1/N][[q-1]]}\Lambda
\]
and
\[
  G_{j,n}:=
  H_{\qdR}^{2n-j}(X/\Z)
  \otimes_{\Z[1/N][[q-1]]}\Lambda .
\]
The associated graded pieces of the even filtration are given by
$q$-de Rham cohomology. Hence there are exact triangles whose homotopy
long exact sequences give connecting maps
\[
  \partial_{j,n}:G_{j,n}\longrightarrow M_{j-1,n+1}.
\]
To prove split degeneration over $\Lambda$, it suffices to show that all
$\partial_{j,n}$ vanish and that the resulting short exact sequences
\begin{equation}\label{qshort}
  0
  \to
  M_{j,n+1}
  \to
  M_{j,n}
  \to
  G_{j,n}
  \to
  0
\end{equation}
are split exact.

Let $\ell$ be a prime not inverted in $R$. Then
\[
  \ell\nmid N
  \qquad\text{and}\qquad
  \ell>2d+1.
\]
Put
\[
  \Lambda_\ell
  :=
  \Lambda\widehat\otimes_R\Z_\ell .
\]
Since
\[
  \Z[[q-1]]\widehat\otimes_{\Z}\Z_\ell
  \simeq
  \Z_\ell[[q-1]],
\]
we have
\[
  \Lambda_\ell\simeq \Z_\ell[[q-1]].
\]
Under the identification
\[
  q-1=u,
\]
this is the Breuil--Kisin prism
\[
  \mS_\ell=\Z_\ell[[u]]
\]
for the unramified base $\Z_\ell$.

We write
\[
  \Ainf^{(\ell)}:=W(\sO_{\mathbb{C}_\ell}^{\flat}),
  \]
and one can regard $\Ainf^{(\ell)}$ as a $\mS_\ell$-algebra via the identity on $\Z_\ell$ and sending $z$ to $[\pi^{\flat}]^p$.

By the functoriality of $S^1$-equivariant even filtration, a $S^1$-equivariant morphism 
\[
    \THH(\mathcal X\otimes\ku/\ku) \to \THH(X_{\sO_{\mathbb{C}_\ell}};\Z_\ell)
  \]
  induces a morphism of filtered spectra
\begin{equation}\label{qbkcomp}
  \fil_{\ev}^{*}
  \TP(\mathcal X\otimes\ku/\ku)
  \to
  \fil_{\ev}^{*}
  \TP(X_{\sO_{\mathbb{C}_\ell}};\Z_\ell).
\end{equation}
Here, by \cite[Theorem 5.0.3]{HRW22} the right-hand side is equivalent to 
\[
    \fil_{\BMS}^{*}
  \TP(X_{\sO_{\mathbb{C}_\ell}};\Z_\ell).
  \]
On associated graded pieces, \eqref{qbkcomp} identifies
$q$-de Rham cohomology with $\Ainf^{(\ell)}$-cohomology (see \cite[Theorem 17.2]{BS22}):
\[
  H_{\qdR}^{q}(X/\Z)
  \otimes_{\Z[1/N][[q-1]]}
  \Ainf^{(\ell)}
  \simeq
  H_{\Ainf^{(\ell)}}^{q}(X_{\sO_{\mathbb{C}_\ell}}).
\]
This comparison includes the isomorphism of $\Ainf^{(\ell)}$-modules for any $m,n\in\Z$
\begin{equation}\label{qbkcompisom}
  \pi_m\fil_{\ev}^{n}
  \TP(\mathcal X\otimes\ku/\ku) \otimes_{\Z[1/N][[q-1]]}
  \Ainf^{(\ell)}
  \simeq
  \pi_m\fil_{\ev}^{n}
  \TP(X_{\sO_{\mathbb{C}_\ell}};\Z_\ell).
\end{equation}

Now apply Theorem~\ref{ainftpsplit} to
$X_{\Z_\ell}$ over $\Z_\ell$. Since the ramification index is $e=1$ and
\[
  2d<\ell-1,
\]
the low ramification condition is satisfied. Therefore the filtration $\fil_{\ev}^{*}\TP(X_{\sO_{\mathbb{C}_\ell}};\Z_\ell)\simeq \fil_{\BMS}^{*}\TP(X_{\sO_{\mathbb{C}_\ell}};\Z_\ell)$ is split degenerate.

Via the filtered comparison \eqref{qbkcompisom}, since $\mS_\ell \to \Ainf^{(\ell)}$ is faithfully flat, it follows
that the $\Lambda_\ell$-base change of the global even filtration is split
degenerate. In particular,
\[
  \partial_{j,n}\otimes_{\Lambda}\Lambda_\ell=0
\]
for every prime $\ell$ not inverted in $R$.

By Lemma~\ref{qdetect}, we get
\[
  \partial_{j,n}=0
\]
for all $j,n$. Thus \eqref{qshort} is a short
exact sequence.

It remains to prove that this short exact sequence splits. But after base
change to $\Lambda_\ell$ it is split exact for every prime $\ell$ not inverted
in $R$. Therefore
Lemma~\ref{qsplitlocal} implies that
\[
  0
  \to
  M_{j,n+1}
  \to
  M_{j,n}
  \to
  G_{j,n}
  \to
  0
\]
is split exact over $\Lambda$.
\end{proof}


\bibliography{bib}
\bibliographystyle{unsrt}


\end{document}